\newtheorem{Theorem}{Theorem}[section]
\newtheorem{Definition}[Theorem]{Definition}
\newtheorem{Proposition}[Theorem]{Proposition}
\newtheorem{Lemma}[Theorem]{Lemma}
\newtheorem{Corollary}[Theorem]{Corollary}
\newtheorem{Remark}[Theorem]{Remark}
\newtheorem{Example}[Theorem]{Example}
\newtheorem{Hypothesis}{Hypothesis}
\numberwithin{equation}{section}
\def\R{\mathbb R}
\def\N{\mathbb N}
\def\E{\mathbb E}
\def\P{\mathbb P}
\def\ds{\displaystyle}
\begin{document}

\title{Absolutely continuous  solutions for continuity equations in Hilbert spaces}

\author{Giuseppe Da Prato}
\address{Scuola Normale Superiore, Piazza dei Cavalieri 7, 56126 Pisa, Italy}

\author{Franco Flandoli}
\address{Scuola Normale Superiore, Piazza dei Cavalieri 7, 56126 Pisa, Italy}

\author{Michael R\"ockner}
\address{Bielefeld University, Universit\"atsstrasse 25, 33615 Bielefeld, Germany and \newline Academy of Mathematics and Systems Science, CAS, Beijing, China}

\keywords{Continuity equations, non Gaussian measures, rank condition}
\subjclass[2010]{35F05, 58D20, 60H07}

\begin{abstract}  
 We prove existence of solutions to continuity  equations in a separable Hilbert space. We look for solutions which are absolutely continuous with respect to a reference measure $\gamma$ which is  Fomin--differentiable with exponentially integrable partial logarithmic derivatives. We describe a class of examples to which our result applies and for which we can prove also uniqueness. Finally, we consider the case where $\gamma$ is the invariant measure of a reaction--diffusion equation and prove uniqueness of solutions in this case. We exploit  that  the gradient operator $D_x$  is closable with respect to $L^p(H,\gamma)$ and a recent formula for the commutator $D_xP_t-P_tD_x$ where $P_t$ is the transition semigroup corresponding  to the reaction--diffusion equation, \cite{DaDe14}.  We stress that $P_t$    is not necessarily symmetric in this case. 
This uniqueness result is an extension to such  $\gamma$ of that in \cite{DaFlRoe14} where $\gamma$ was the Gaussian invariant measure of a suitable Ornstein--Uhlenbeck process.\bigskip

\noindent
{\sc R\'esum\'e}.
On d\'emontre l'existence d'une  solution de quelques \'equations  de  continuit\'e  dans un espace de Hilbert  s\'eparable. On s'interesse aux  solutions absolument
 continues par rapport \`a une mesure de reference $\gamma$ que l'on suppose d\'erivable au sens de Fomin et ayant
  les   deriv\'ees partielles
logarithmiques  exponentiellement int\'egrables. 
On d\'ecrit une classe d'exemples a qui nos r\'esultats s' appliquent et dont on peut aussi montrer l'unicit\'e. 
Finalment on consid\`ere le cas o\`u   $\gamma$ est la mesure invariante 
d'une \'equation de  r\'eaction--diffusion  dont  l'on prouve 
l'unicit\'e des solutions.
On utilise le fait que le gradient   $D_x$  est fermable dans $L^p(H,\gamma)$  et aussi une  r\'ecente formule   pour le commutateur  $D_xP_t-P_tD_x$,   $P_t$  \'etant le s\'emigroupe de transitions  qui
corr\'espond \`a l'\'equation de  r\'eaction--diffusion consider\'ee \cite{DaDe14}.  On souligne que  dans ce cas  $P_t$   n'est pas n\'ecessairement sym\'etrique. 
Ce r\'esultat d'unicit\'e est une extension de celui obtenu 
dans \cite{DaFlRoe14} ou  $\gamma$ \'et\'e la mesure invariante  Gaussienne d'un processus  de Ornstein--Uhlenbeck  appropri\'e. 
\end{abstract}

\maketitle

\section{Introduction}
We are given     a separable Hilbert space  $H$ (norm $|\cdot|_H$, inner product  $\langle \cdot,\cdot  \rangle$),    a Borel vector field $F:[0,T]\times H\to H$ and a Borel probability measure $\zeta$ on $H$. We are concerned with the  following continuity equation,
 \begin{equation}
\label{e1.1}
\int_0^T\int_H\left[D_tu(t,x)+\langle D_xu(t,x),F(t,x)  \rangle\right]\,\nu_t(dx)\,dt=-\int_Hu(0,x)\,\zeta(dx),\quad \forall\;u\in \mathcal F C^1_{b,T},
\end{equation} 
 where the unknown $\mbox{\boldmath}\,\nu=(\nu_t)_{t\in[0,T]}$ is a  probability kernel such that $\nu_0=\zeta$. Moreover, $D_x$ represents the gradient operator  and $\mathcal F C^1_{b,T}$ is defined as follows:  let $\mathcal F C^k_{b}$ and $\mathcal F C^k_{0}$, for $k\in\N\cup\{\infty\}$, denote the set of all functions $f:H\to\R$ of the form
 $$
 f(x)=\widetilde f(\langle  h_1,x \rangle,\cdots, \langle h_N,x   \rangle),\quad x\in H,
 $$
 where $N\in\N$, $\widetilde f\in C^k_b(\R^N)$, $ C^k_0(\R^N)$ respectively (i.e. $ \widetilde f$ has compact support)  and $h_1,\cdots,  h_N\in Y,$ where $Y$ is a dense linear subspace of $H$ to be specified later. 
 Then $\mathcal F C^k_{b,T}$ is defined to be the $\R$--linear span of all functions $u:[0,T]\times H\to\R$ of the form
 $$
 u(t,x)=g(t)f(x),\quad (t,x)\in[0,T]\times H,
 $$
 where $g\in C^1([0,T];\R)$ with $g(T)=0$ 
 and $f\in \mathcal F C^k_{b}$. Correspondingly, let $\mathcal V\mathcal F C^k_{b,T}$ be the set of all maps $G:[0,T]\times H\to H$  of the form
 \begin{equation}
\label{e1.3m}
G(t,x)=\sum_{i=1}^Nu_i(t,x)h_i,\quad (t,x)\in[0,T]\times H,
 \end{equation}
 where $N\in\N$, $u_1,\cdots,u_N\in \mathcal F C^k_{b,T}$ and $h_1,\cdots,  h_N\in Y$. 
 Clearly, $\mathcal F C^\infty_{b,T}$ is dense in $L^p([0,T]\times H,\nu)$ for all finite Borel measures $\nu$ on $[0,T]\times H$ and all $p\in[1,\infty)$.  $\mathcal V\mathcal F C^k_{b}$ denotes the set of all $G$ as in \eqref{e1.3m} with $u_i\in \mathcal F C^k_{b,T}$  replaced by $u_i\in \mathcal F C^k_{b}.$ 
   Of course, all these spaces $\mathcal F C^k_{b}$, $\mathcal F C^k_{0}$, $\mathcal F C^k_{b,T}$, $\mathcal V\mathcal F C^k_{b}$, 
 $\mathcal V\mathcal F C^k_{b,T}$ depend on $Y$. But since $\gamma$ in Hypothesis \ref{h1} below  will be fixed  and hence the corresponding $Y$ defined there will be fixed we do not express this dependence in the notation.

 It is well known that problem \eqref{e1.1}  in general  admits several solutions even when $H$ is finite dimensional. So, it is  natural to  look for well posedness of \eqref{e1.1} within   the special class of  measures $(\nu_t)_{t\in[0,T]}$ which are absolutely continuous with respect to a given {\em reference     measure} $\gamma$. In this case, denoting by $\rho(t,\cdot)$ the density of $\nu_t$  with respect to $\gamma$,
$$
\nu_t(dx)=\rho(t,x)\gamma(dx),\quad
t\in[0,T],
$$
equation  \eqref{e1.1} becomes
  \begin{equation}
  \begin{array}{l}
\label{e1.2}
\ds\int_0^T\int_H\left[D_tu(t,x)+\langle D_xu(t,x),F(t,x)  \rangle\right]\,\rho(t,x)\,\gamma(dx)\,dt\\
\\
\ds =-\int_Hu(0,x)\,\rho_0(x)\gamma(dx),\quad \forall\;u\in\mathcal F C^1_{b,T}.
\end{array}
\end{equation} 
Here $\rho_0:=\rho(0,\cdot)$ is given and $\rho(t,\cdot),\;t\in[0,T],$ is the unknown.

In this paper we prove existence and uniqueness results for solutions to \eqref{e1.2}.  
 
Our   basic assumption on $\gamma$ is the following
\begin{Hypothesis}
\label{h1}
   $\gamma$ is a nonnegative measure on $(H,\mathcal B(H))$  with $\gamma(H)<\infty$ such that there exists a dense linear subspace $Y\subset H$ having the following properties:
   
   For all $h\in Y$ there exists $\beta_h:H\to\R$ Borel measurable such that for some $c_h>0$
   $$
   \int_H e^{c_h|\beta_h|}\,d\gamma<\infty
   $$
   and
   $$
   \int_H\partial_h u\,d\gamma=-\int_Hu\beta_h\,d\gamma,
   $$
  where  $\partial_hu$ denotes the partial derivative of $u$ in the direction $h$.
 \end{Hypothesis}

Assume from now on that $\gamma$ satisfies Hypothesis \ref{h1}.
\begin{Remark}
\label{r1.1}
It is well known that the operator $D_x=$ Fr\'echet--derivative with domain $\mathcal FC^1_b$ is closable in $L^p(H,\gamma)$ for all $p\in[1,\infty)$, see e.g. \cite{AlRoe90}. Its closure will  again be denoted by  $D_x$ and its domain will be denoted by $W^{1,p}(H,\gamma)$.
\end{Remark}

Let $D_x^*:dom(D_x^*)\subset L^2(H,\gamma;H)\to  L^2(H,\gamma)$ denote the adjoint of $D_x$.

\begin{Lemma}
\label{l1.2}
$\mathcal V\mathcal FC^1_b\subset dom(D_x^*)$ and for $G\in \mathcal V\mathcal FC^1_b$, $G=\sum_{i=1}^Nu_ih_i$ we have
$$
D_x^*G=-\sum_{i=1}^N(\partial_{h_i}u_i+\beta_{h_i}u_i).
$$
\end{Lemma}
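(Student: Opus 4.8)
The plan is to verify directly from the definition of the adjoint that the proposed expression
$$
w := -\sum_{i=1}^N\bigl(\partial_{h_i}u_i+\beta_{h_i}u_i\bigr)
$$
is an admissible value of $D_x^*G$. Recall that $G\in dom(D_x^*)$ with $D_x^*G=w$ precisely means that $w\in L^2(H,\gamma)$ and
$$
\int_H\langle D_xf,G\rangle\,d\gamma=\int_Hf\,w\,d\gamma\qquad\text{for all }f\in dom(D_x)=W^{1,2}(H,\gamma).
$$
First I would check that the candidate $w$ is indeed in $L^2(H,\gamma)$: each $u_i\in\mathcal FC^1_b$ and each $\partial_{h_i}u_i=\langle D_xu_i,h_i\rangle$ are bounded, while the exponential integrability $\int_He^{c_{h_i}|\beta_{h_i}|}\,d\gamma<\infty$ from Hypothesis \ref{h1}, together with $\gamma(H)<\infty$, forces $\beta_{h_i}\in L^2(H,\gamma)$; hence each summand, and therefore $w$, lies in $L^2(H,\gamma)$. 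For the same reason $G=\sum_iu_ih_i\in L^2(H,\gamma;H)$.

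Second, since $D_x$ is by definition the $L^2$--closure of the gradient on $\mathcal FC^1_b$ (Remark \ref{r1.1}), that space is a core for $D_x$, so it suffices to prove the displayed identity for $f\in\mathcal FC^1_b$ and then pass to the closure. For such $f$ I would expand $\langle D_xf,G\rangle=\sum_iu_i\langle D_xf,h_i\rangle=\sum_iu_i\,\partial_{h_i}f$ and apply the product rule $u_i\,\partial_{h_i}f=\partial_{h_i}(u_if)-(\partial_{h_i}u_i)f$. Because the product of two cylinder functions is again a cylinder function, $u_if\in\mathcal FC^1_b$, so Hypothesis \ref{h1} applies and gives $\int_H\partial_{h_i}(u_if)\,d\gamma=-\int_H(u_if)\beta_{h_i}\,d\gamma$. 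Hence
$$
\int_Hu_i\,\partial_{h_i}f\,d\gamma=-\int_Hf\bigl(\partial_{h_i}u_i+\beta_{h_i}u_i\bigr)\,d\gamma,
$$
and summing over $i$ yields exactly $\int_Hf\,w\,d\gamma$.

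The one point requiring care, and the only mild obstacle, is bridging the cylinder--function integration--by--parts formula of Hypothesis \ref{h1}, asserted only on $\mathcal FC^1_b$, with the need to obtain the adjoint identity on all of $W^{1,2}(H,\gamma)$. This is handled by closability: having established the identity on the core with a fixed $w\in L^2(H,\gamma)$ on the right, I would take $f\in W^{1,2}(H,\gamma)$ and $f_n\in\mathcal FC^1_b$ with $f_n\to f$ and $D_xf_n\to D_xf$ in the respective $L^2$ spaces, then pass to the limit using that the left side is controlled by $\|D_xf_n\|_{L^2(H,\gamma;H)}\,\|G\|_{L^2(H,\gamma;H)}$ and the right side by $\|f_n\|_{L^2(H,\gamma)}\,\|w\|_{L^2(H,\gamma)}$. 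This upgrades the identity to all of $dom(D_x)$, which is precisely the assertion $G\in dom(D_x^*)$ with $D_x^*G=w$. I note that exponential integrability is used here only to place $\beta_{h_i}u_i$ in $L^2$; for this lemma alone $L^2$--integrability of each $\beta_{h_i}$ would already suffice.
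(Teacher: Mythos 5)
Your proof is correct and follows essentially the same route as the paper: expand $\langle D_xv,G\rangle=\sum_i u_i\,\partial_{h_i}v$, apply the product rule, and use the integration-by-parts identity of Hypothesis \ref{h1} on the cylinder function $u_iv$. The paper's proof stops at this computation for $v\in\mathcal FC^1_b$; your additional checks (that the candidate adjoint lies in $L^2(H,\gamma)$ via exponential integrability of $\beta_{h_i}$, and the density/closability argument extending the identity to all of $W^{1,2}(H,\gamma)$) are exactly the routine details the paper leaves implicit.
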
 
\begin{proof}
For $v\in \mathcal FC^1_b$ we have
$$
\begin{array}{l}
\ds\int_H \langle D_xv,G   \rangle_H\,d\gamma=\sum_{i=1}^N\int_H\partial_{h_i}v\,u_i
\,d\gamma\\
\\
\ds=\sum_{i=1}^N\int_H\partial_{h_i}(v\,u_i)
\,d\gamma-\sum_{i=1}^N\int_Hv\,\partial_{h_i}u_i
\,d\gamma\\
\\
\ds=-\int_H v\,\sum_{i=1}^N(\partial_{h_i}u_i+\beta_{h_i}u_i)\,d\gamma.
\end{array} 
$$
\qed
\end{proof}
We stress that if $H$ is infinite dimensional, $\beta_h$ is typically not bounded and not continuous. Here are some examples. For $G$ as in Lemma \ref{l1.2}, below we sometimes use the notation
$$
\mbox{\rm div}\;G:=\sum_{i=1}^N\partial_{h_i}u_i.
$$
\begin{Example}
\label{ex1.3}
\em (i) Let $Q$ be a symmetric positive definite operator of trace class  on $H$ and $\gamma:=N(0,Q),$ i.e. the centered Gaussian measure on $H$ with covariance operator $Q$. Assume that ker $Q=\{0\}$  and let $Y$ be the linear span of all eigenvectors of $Q$. Then Hypothesis \ref{h1} is fulfilled with this $Y$ and for $h\in Y$, $h=a_1h_1+\cdots +a_Nh_N$ with $Qh_i=\lambda_i^{-1}h_i,$ we have
$$
\beta_h(x)=-\sum_{i=1}^Na_i\lambda_i \langle h_i,x  \rangle_H,\quad x\in H.
$$
This, in particular,  covers the case studied in \cite{DaFlRoe14}, where only uniqueness of solutions to \eqref{e1.2} was studied.

(ii) Let $H:=L^2((0,1),d\xi)$ and $A:=\Delta$ with zero boundary conditions.

We recall that $N(0,\tfrac12(-A)^{-1})((C([0,1];\R))=1$. Define for $p\in(2,\infty)$ and $\alpha\in[0,\infty)$
$$
\gamma(dx):=\frac1{Z}\;e^{-\frac\alpha{p}\int_0^1|x(\xi)|^pd\xi}\;N(0,\tfrac12\,(-A)^{-1})(dx),
$$
where
$$
Z:=\int_H e^{-\frac\alpha{p}\int_0^1|x(\xi)|^pd\xi}\;N(0,\tfrac12\,(-A)^{-1})(dx).
$$
Then with $Y$ as in (i) for $Q=\frac12\,(-A)^{-1}$ we find for $h=a_1h_1+\cdots +a_Nh_N$ as in (i)
\begin{equation}
\label{e1.3a}
\beta_h(x)=-\sum_{i=1}^Na_i\left(\lambda_i \langle h_i,x  \rangle_H+\alpha\int_0^1h_i(\xi)\,|x(\xi)|^{p-2}\,x(\xi)\,d\xi\right) \quad\mbox{\rm for $N(0,\tfrac12\,(-A)^{-1})$--a.e. $x\in H$}
\end{equation}
 and obviously also  the exponential integrability condition holds   in Hypothesis \ref{h1}.\medskip
 
 \noindent(iii)  Let $H$ and $A$  be as in (ii) and let  $\gamma$ be  the invariant measure of the solution to 
 \begin{equation}
\label{e3.1bis}
\left\{\begin{array}{lll}
dX(t)=[AX(t)+p(X(t))]dt+BdW(t),\\
\\
X(0)=x,\quad x\in H,
\end{array}\right.
\end{equation}
where   $p$ is a decreasing polynomial   
of odd degree equal to $N>1$,  $B\in L(H)$ with a {\em bounded inverse} and  $W$  is an $H$--valued cylindrical Wiener process  on a filtered
probability space
$(\Omega,\mathcal F, (\mathcal F_t)_{t>0},\P)$ (see \cite{DaDe17}). 
Then it was proved in \cite[Proposition 3.5]{DaDe17} that Hypothesis \ref{h1} holds with $Y:=D(A)$, where $A$ is as in (ii) above except that each  $\beta_h$ was only proved to be $L^p(L^2(0,1),\gamma)$ for every $p\ge 1$. More precisely, it was proved (see \cite[eq. (3.17)]{DaDe17} ) that for all $h\in D(A)$
 $$
\left( \int_{L^2(0,1)} |\beta_h|^p\,d\gamma \right)^{\frac1p} \le C_p|Ah|,\quad \forall\;p\ge 2,
 $$
 where $C_p$  is the constant of the Burkholder--Davis--Gundy inequality for $p\ge 2$ which (when proved by It\^o's formula) can easily  be  seen to be smaller than $12\,p$ if $p\ge 4$. For the reader's convenience we include a proof in Appendix B below. Hence, because for all $n\in\N$ by Stirling's formula
 $$
 \left(\frac1{n!}12^n\,n^n   \right)^{\frac1n}\le 12n\left( \frac1{\sqrt{2\pi}}\,n^{-n-\frac12} \,e^n \right)^{\frac1n}=12e\left( \frac1{\sqrt{2\pi}}\right)^{\frac1n}e^{-\frac1{2n}\ln n}\to 12e\quad\mbox{\rm as}\;n\to\infty,
 $$
   we have for all $\epsilon\in(0,(12e|Ah|)^{-1}), h\in D(A)\setminus \{0\}$,
 $$
 \int_{L^2(0,1)} e^{\epsilon|\beta_h|}\,d\gamma\le \sum_{n=0}^\infty \frac1{n!}\epsilon^n12^n\,n^n |Ah|^n<\infty.
 $$
 So, for any $c_h\in (0,(12e|Ah|)^{-1})$,  exponential integrability holds for $|\beta_h|$ and Hypothesis \ref{h1} is satisfied.
 
 Define for an orthonormal basis $\{e_i,\,i\in\N\}$ of $H$ consisting of elements in $Y$ and $N\in\N$
$$
H_N:=\mbox{\rm  lin span}\,\{e_1,...,e_N\}
$$
and let  $\Pi_N:H\to E_N$ be the   orthogonal projection onto $E_N:=H_N^\perp$, where $H_N^\perp$ is the orthogonal complement of $H_N$, i.e.
\begin{equation}
\label{e2.9n}
H=H_N\oplus E_N\equiv \R^N\times E_N,
\end{equation}
hence, for $z\in H,$ $z=(x,y)$ with unique $x\in\R^N$, $y\in E_N$.

 Letting $\nu_N:=\gamma\circ \Pi^{-1}_N$ be the image measure on  $(E_N,\mathcal B(E_N))$ of $\gamma$ under $ \Pi_N$. Then we have the following well known disintegration result for $\gamma$:
\begin{Lemma}
\label{l2.2n}
There exists $\Psi_N:\R^N\times E_N\to [0,\infty)$,  $\mathcal B(\R^N\times E_N)$--measurable such that
\begin{equation}
\label{e2.10n}
\gamma(dz)=\gamma(dx\,dy)=\Psi_N^2(x,y)dx\,\nu_N(dy),
\end{equation}
where $dx$ denotes Lebesgue measure on $\R^N$.
Furthermore, for every $y\in E_N$
\begin{equation}
\label{e2.11n}
\Psi_N(\cdot,y)\in H^{1,2}(\R^N,dx),
\end{equation}
i.e. the Sobolev space of order $1$ in $L^2(\R^N,dx)$.
\end{Lemma}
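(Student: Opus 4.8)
The plan is to disintegrate $\gamma$ along the projection $\Pi_N$ and then to upgrade the resulting conditional measures to absolutely continuous ones with $H^{1,2}$ square-root densities, by feeding the integration-by-parts formula of Hypothesis \ref{h1} in the finitely many directions $e_1,\dots,e_N$ into the classical finite-dimensional theory of measures with integrable logarithmic gradient. Since $E_N$ is a closed subspace of the separable Hilbert space $H$, it is Polish, and so is $\R^N\cong H_N$; hence the disintegration theorem applied to the finite measure $\gamma$ and the Borel map $\Pi_N:H\to E_N$, whose image is $\nu_N$, yields a $\nu_N$-a.e. uniquely determined, $\nu_N$-measurable family $(\mu_y)_{y\in E_N}$ of probability measures on $\R^N$ with
$$
\gamma(dx\,dy)=\mu_y(dx)\,\nu_N(dy).
$$
It then remains to show that for $\nu_N$-a.e. $y$ the measure $\mu_y$ is absolutely continuous with respect to Lebesgue measure and that the square root of its density lies in $H^{1,2}(\R^N,dx)$.

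Next, I fix $i\in\{1,\dots,N\}$ and use that $e_i\in Y$, so Hypothesis \ref{h1} gives $\beta_{e_i}$ with $\int_H e^{c|\beta_{e_i}|}\,d\gamma<\infty$ for some $c>0$ and the integration-by-parts identity for all $u\in\mathcal FC^1_b$. I test this identity with products $u(z)=f(x)g(y)$, $z=(x,y)$, where $f\in C^\infty_c(\R^N)$ depends only on the $H_N$-coordinate and $g$ is a bounded cylindrical function depending only on the $E_N$-coordinate through directions $e_{N+1},e_{N+2},\dots\in Y\cap E_N$ (these span a dense subspace of $E_N$, so such $g$ exhaust a dense subset of $L^1(E_N,\nu_N)$). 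Since $e_i\in H_N$, one has $\partial_{e_i}u=(\partial_{x_i}f)(x)\,g(y)$, and inserting the disintegration gives
$$
\int_{E_N} g(y)\Big[\int_{\R^N}\partial_{x_i}f\,d\mu_y+\int_{\R^N}f\,\beta_{e_i}(\cdot,y)\,d\mu_y\Big]\nu_N(dy)=0 .
$$
As $g$ ranges over the dense class above, I conclude that for $\nu_N$-a.e. $y$ and all $f\in C^\infty_c(\R^N)$ one has $\int_{\R^N}\partial_{x_i}f\,d\mu_y=-\int_{\R^N}f\,\beta_{e_i}(\cdot,y)\,d\mu_y$, i.e. $\mu_y$ admits $\beta_{e_i}(\cdot,y)$ as distributional logarithmic derivative along $x_i$. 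Fubini together with the exponential integrability shows $\beta_{e_i}(\cdot,y)\in L^2(\mu_y)$ for $\nu_N$-a.e. $y$; running $f$ over a countable dense family and intersecting the finitely many $\nu_N$-full sets over $i=1,\dots,N$, these properties hold simultaneously on one common $\nu_N$-full set.

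Finally, I invoke the finite-dimensional theory of differentiable measures: a finite Borel measure on $\R^N$ with an integrable logarithmic gradient in each coordinate direction is absolutely continuous, $\mu_y(dx)=\varrho_y(x)\,dx$, with $\varrho_y\in W^{1,1}_{\mathrm{loc}}(\R^N)$ and $\partial_{x_i}\varrho_y=\beta_{e_i}(\cdot,y)\,\varrho_y$. Setting $\Psi_N(\cdot,y):=\sqrt{\varrho_y}$, the Sobolev chain rule yields $\partial_{x_i}\Psi_N(\cdot,y)=\tfrac12\,\beta_{e_i}(\cdot,y)\,\Psi_N(\cdot,y)$, so that
$$
\int_{\R^N}|\partial_{x_i}\Psi_N(\cdot,y)|^2\,dx=\tfrac14\int_{\R^N}|\beta_{e_i}(\cdot,y)|^2\,d\mu_y<\infty,\qquad \int_{\R^N}|\Psi_N(\cdot,y)|^2\,dx=\mu_y(\R^N)=1,
$$
whence $\Psi_N(\cdot,y)\in H^{1,2}(\R^N,dx)$. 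To obtain the required $\mathcal B(\R^N\times E_N)$-measurability, I choose the jointly measurable representative $\varrho_y(x)=\limsup_{r\downarrow0}\mu_y(B_r(x))/|B_r(x)|$ (jointly measurable because $(x,y)\mapsto\mu_y(B_r(x))$ is) and take its nonnegative square root.

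The main obstacle I expect is this last step: passing rigorously from the distributional logarithmic-derivative identities to absolute continuity of $\mu_y$ and, above all, to the $H^{1,2}$-regularity of $\sqrt{\varrho_y}$, since the chain rule for the square root applied to a merely $W^{1,1}_{\mathrm{loc}}$ density requires care on the set where $\varrho_y$ vanishes; in addition one must arrange all these properties, and the joint measurability of $\Psi_N$, to hold on a single $\nu_N$-full set. By contrast, the disintegration and the Fubini/density arguments of the first two steps are routine.
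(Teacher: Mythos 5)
Your argument is correct in outline, but note that the paper does not actually prove Lemma \ref{l2.2n}: its entire ``proof'' is the citation \cite[Proposition 4.1]{AlRoeZh93}. What you have written is essentially a reconstruction of the standard proof of that cited result --- disintegrate $\gamma$ over $\Pi_N$, push the integration-by-parts identity of Hypothesis \ref{h1} for $h=e_1,\dots,e_N$ through the disintegration using product test functions $f(x)g(y)$ (which do lie in $\mathcal FC^1_b$ because the $e_i$ and the $e_{N+j}$ all belong to $Y$, and the cylindrical $g$ generate $\mathcal B(E_N)$, hence are measure-determining for $\nu_N$), obtain for $\nu_N$-a.e.\ $y$ a logarithmic derivative $\beta_{e_i}(\cdot,y)\in L^1(\mu_y)\cap L^2(\mu_y)$ of the conditional measure $\mu_y$ in each of the $N$ coordinate directions, and then invoke the finite-dimensional theory of differentiable measures. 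Two caveats. First, the two finite-dimensional facts you appeal to are genuine theorems that must be quoted or proved: a finite measure on $\R^N$ that is Fomin differentiable along $N$ linearly independent directions with logarithmic derivatives in $L^1(\mu)$ is absolutely continuous with a $W^{1,1}(\R^N)$ density $\varrho$ satisfying $\partial_{x_i}\varrho=\beta_i\varrho$; and if in addition $\beta_i\in L^2(\mu)$ then $\sqrt{\varrho}\in H^{1,2}(\R^N,dx)$ with $\partial_{x_i}\sqrt{\varrho}=\tfrac12\,\beta_i\sqrt{\varrho}\,{\mathds 1}_{\{\varrho>0\}}$. The obstacle you flag at the zero set of $\varrho$ is real but is handled by the standard device of passing to the limit in $\sqrt{\varrho+\epsilon}-\sqrt{\epsilon}$, whose gradient is $\nabla\varrho/(2\sqrt{\varrho+\epsilon})$ and is bounded in $L^2(dx)$ uniformly in $\epsilon$ because $\nabla\varrho=0$ $dx$-a.e.\ on $\{\varrho=0\}$ for a nonnegative $W^{1,1}$ function. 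Second, your construction yields \eqref{e2.11n} only for $\nu_N$-a.e.\ $y$, whereas the lemma asserts it for every $y\in E_N$; this is repaired by redefining $\Psi_N(\cdot,y)$ on the exceptional $\nu_N$-null set to be a fixed element of $H^{1,2}(\R^N,dx)$, which does not affect \eqref{e2.10n}. With these points made explicit, your proof is complete and is, in substance, the argument behind the reference the authors rely on.
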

\begin{proof}
See \cite[Proposition 4.1]{AlRoeZh93}.
\qed
\end{proof}
  We have by  Hypothesis \ref{h1} that for all $1\le i\le N$ there exists $c_i\in (0,\infty)$ such that
\begin{equation}
\label{e1.8primo}
\begin{array}{lll}
\ds\infty&>&\ds\int_H e^{c_i|\beta_{e_i}|}\,d\gamma=\int_{E_N}\int_{\R^N}e^{c_i|\beta_{e_i}(x,y)|}\,\Psi_N^2(x,y)\,dx\,\nu_N(dy)\\
\\
&=&\ds
\int_{E_N}\int_{\R^N}\exp\left[c_i\,\left|\frac{\partial }{\partial x_i}\,\Psi_N^2(x,y)/\Psi_N^2(x,y)\right|\right]\,\Psi_N^2(x,y)dx\,\nu_N(dy),
\end{array} 
\end{equation}
where we used that for $1\le i\le N$
\begin{equation}
\label{e2.12n}
\beta_{e_i}(x,y)=\frac{\partial }{\partial x_i}\,\Psi_N^2(x,y)/\Psi_N^2(x,y),\quad (x,y)\in \R^N\times E_N=H,
\end{equation}
which is an immediate consequence of the disintegration \eqref{e2.10n}, and the right hand side of \eqref{e2.12n} is defined to be zero on $\{\Psi_N=0\}$. Hence 
\begin{equation}
\label{e2.13n}
\int_{\R^N}\exp\left[c_i\left|\frac{\partial }{\partial x_i}\,\Psi_N^2(x,y)/\Psi_N^2(x,y)\right|\right]\,\Psi_N^2(x,y)dx<\infty\quad\mbox{\rm  for $\nu_N$-a.e., $y\in E_N$}
\end{equation}

 Define for $M,l\in\N$ and $(x,y)\in \R^N\times E_N(=H)$
$$
\Psi_{N,M,l}(x,y)=\Psi_N(x,y)\quad\mbox{\rm if}\,\,\Psi_N^2(\cdot,y)\,\mbox{\rm is $C^2$, strictly positive and bounded}
$$
and otherwise
\begin{equation}
\label{e2.15n}
\Psi_{N,M}(x,y):=\left( \Psi_N^2(x,y)\wedge M\vee M^{-1}   \right)^{1/2},
\end{equation}
\begin{equation}
\label{e2.16n}
\Psi_{N,M,l}(x,y):=\left( \Psi^2_{N,M}(\cdot,y)*\delta_l   \right)^{1/2}(x),
\end{equation}
where $\delta_l (x)=l^N\eta(lx),$ $x\in\R^N$, $\eta\in C^\infty_0(\R^N)$  with support in the unit ball, $\eta\ge 0$, $\eta(x)=\eta(-x)$, $x\in\R^N,$ and $\int_{\R^N}\eta\,dx=1$). We note that then clearly
$\Psi_{N,M,l}(x,y)\ge M^{-1}$ for all $x\in\R^N$.
Obviously,
  \begin{equation}
\label{e2.15'}
\frac{\partial _{x_i}\,\Psi^2_{N,M,l}(\cdot,y)}{\Psi^2_{N,M,l}(\cdot,y)}
\to \frac{\partial _{x_i}\,\Psi^2_{N,M}(\cdot,y)}{\Psi^2_{N,M}(\cdot,y)}\quad\mbox{\rm in}\, L^1_{loc}(\R^N,dx)\; \mbox{\rm as}\;l\to\infty,\;\forall\;y\in E_N,\;1\le i\le N.
 \end{equation}

\end{Example}
Concerning $F$ in  \eqref{e1.1} we assume for $\gamma$ and   $Y$ given as in Hypothesis \ref{h1}.
\begin{Hypothesis}
\label{h2}
(i) $F:[0,T]\times H\to H$ is Borel measurable and bounded.\medskip

(ii) There exists an orthonormal basis $\{e_n,\,n\in\N\}$ of $H$ consisting of elements in $Y$  such that  for  every $N\in\N$ and  $\nu_N$ a.e. $y\in E_N$
\begin{equation}
\label{e1.14m}
\frac{\partial _{x_i}\,\Psi^2_{N}(\cdot,y)}{\Psi^2_{N}(\cdot,y)}\in  L^1_{loc}(\R^N,dx)
\end{equation}
(Please see the ``Note added in Proof" after the acknowledgement).\medskip

\noindent (iii) There exist $F_j:[0,T]\times H\to H,$  $j\in\N$, such that for some $N_j\in\N$ increasing in $j$,
$$
F_j(t,x)=\sum_{i=1}^{N_j}f_{ij}(t,x)e_i,\quad (t,x)\in[0,T]\times H,
$$
(with $e_i$ as in (ii)), where for $1\le i\le N_j$
$$
f_{ij}(t,x)=\widetilde f_{ij}(t,( \langle x,e_1   \rangle,..., \langle x,e_{N_j}  \rangle )) 
$$
with $\widetilde f_{ij}\in C_b([0,T]\times\R^{N_j};\R)$  and $\widetilde f_{ij}(t,\cdot)\in C^2_b (\R^{N_j};\R)$ for all $t\in [0,T]$ such that all first and all second partial derivatives are in $C([0,T]\times\R^{N_j};\R)$,
$$
\left\{\begin{array}{l}
\ds \lim_{j\to\infty} F_j=F\quad dt\otimes\gamma\mbox{\rm-a.e.}\\\\
\ds\sup_{j\in\N}\,\|F_j\|_\infty<\infty,\\
\\
\ds \exists\,\delta>0\;\mbox{\it such that}\, M:= \sup_{j\in\N}\,C_{F_j}(\delta)<\infty,
\end{array}\right. 
$$
 where $C_{F_j}(\delta):=\int_{E_{N_j}}C_{F_j}(\delta,y)\,\nu_{N_j}(dy)$ and
  $$
C_{F_j}(\delta,y):=\sup_{M,l\in\N}\int_0^T\left(\int_{\R^{N_j}}e^{\delta(D^*_{N_j,M,l}F_j(t,x,y))^+}-1\right)\,\Psi_{N_j,M,l}^2(x,y)\,dx\,dt,
$$
 with
\begin{equation}
\label{e2.18n}
D^*_{N_j,M,l}F_j(t,(x,y)):=-\sum_{i=1}^{N_j} \left(  \partial_{e_i}f_{ij}(t,x)+f_{ij}(t,x)\,\frac{\partial }{\partial x_i}\,\Psi_{N_j,M,l}^2(x,y)/\Psi_{N_j,M,l}^2(x,y)  \right).
\end{equation}

\end{Hypothesis}
\begin{Remark}
\label{r1.5primo}
\em
We shall see in Example \ref{ex2.10} below that Hypothesis \ref{h2}(ii) is trivially fulfilled  in Examples \ref{ex1.3}(i) and (ii). Whether it holds
in  Example \ref{ex1.3}(iii) is an open problem (see Remark \ref{r3.12primo} below) and will be a subject of  further study.  
\end{Remark}

 Here is an abstract condition  which ensures Hypothesis \ref{h2}. Some concrete examples will be given later.  
 
\begin{Proposition}
\label{p1.6}
 Let $\gamma$ be a nonnegative measure satisfying Hypothesis 1;\ let $\Psi
_{N}\left(  x,y\right)  $ be defined by (1.7). Let $\Lambda:H\rightarrow H$ be
a positive selfadjoint Hilbert-Schmidt operator with $\Lambda e_{n}%
=\epsilon_{n}e_{n}$, for a sequence $\left\{  \epsilon_{n}\right\}  $ such
that $\sum_{n=1}^{\infty}\epsilon_{n}^{2}<\infty$. Let $F:\left[  0,T\right]
\times H\rightarrow H$ satisfying the conditions below. Assume:

i) $\Psi_{N}\left(  \cdot,y\right)  $ is of class $C^{2}\left(  \mathbb{R}%
^{N}\right)  $, bounded and strictly positive for all $y\in E_{N}$

ii) $F=\Lambda F_{0}$, where $F_{0}:\left[  0,T\right]  \times H\rightarrow H$
is uniformly continuous and bounded

iii) (divergence bounded from below) for some constant $C\geq0$
\[
\sum_{n=1}^{N}\partial_{e_{n}}\left\langle F\left(  t,x\right)  ,e_{n}%
\right\rangle \geq-C\qquad\text{for every }N\text{ and }x\in H
\]

iv) for some constants $\delta>0$
\[
\int_{H}e^{\delta\sum_{n=1}^{\infty}\epsilon_{n}\left\vert \beta_{e_{n}%
}\left(  x\right)  \right\vert }\nu\left(  dx\right)  <\infty.
\]

 Then Hypothesis 2 is fulfilled.
\end{Proposition}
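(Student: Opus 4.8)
The plan is to verify the three parts of Hypothesis \ref{h2} in turn, the first two being immediate and the third requiring the construction of a suitable approximating sequence together with a uniform exponential estimate.

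First, since $\Lambda$ is bounded and $F_0$ is bounded and uniformly continuous, $F=\Lambda F_0$ is bounded and continuous, hence Borel, which gives Hypothesis \ref{h2}(i). For Hypothesis \ref{h2}(ii) I would use assumption (i): if $\Psi_N(\cdot,y)$ is $C^2$, bounded and strictly positive, then $\partial_{x_i}\Psi_N^2/\Psi_N^2=\partial_{x_i}\log\Psi_N^2(\cdot,y)$ is continuous on $\R^N$, hence in $L^1_{loc}(\R^N,dx)$. Moreover, and this is the decisive simplification, assumption (i) forces $\Psi_{N,M,l}=\Psi_N$ for all $M,l$ (the first alternative in the definition of $\Psi_{N,M,l}$ always applies), so the supremum over $M,l$ in $C_{F_j}(\delta,y)$ collapses, $\partial_{x_i}\Psi_{N,M,l}^2/\Psi_{N,M,l}^2=\beta_{e_i}$ by \eqref{e2.12n}, and $D^*_{N_j,M,l}F_j$ in \eqref{e2.18n} reduces to $-\operatorname{div}F_j-\sum_{i=1}^{N_j}f_{ij}\beta_{e_i}$. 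Integrating the disintegration \eqref{e2.10n} in $y$ then turns $C_{F_j}(\delta)$ into $\int_0^T\int_H(e^{\delta(D^*_{N_j}F_j)^+}-1)\,d\gamma\,dt$.

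Next, I would build $F_j$ by cylindrical projection and mollification. Writing $\widehat F_N(\xi):=F(\xi_1 e_1+\cdots+\xi_N e_N)$ for $\xi\in\R^N$ and letting $\rho_l$ be a standard mollifier on $\R^N$, set $F_j:=\sum_{i=1}^{N_j}\langle(\widehat F_{N_j}*\rho_{l_j})(\pi_{N_j}\cdot),e_i\rangle\,e_i$, where $\pi_N x=(\langle x,e_1\rangle,\dots,\langle x,e_N\rangle)$ and $N_j,l_j\to\infty$. Because $F=\Lambda F_0$ has $i$-th component $\epsilon_i\langle F_0,e_i\rangle$, the factor $\epsilon_i$ survives: $f_{ij}=\epsilon_i\,(\widehat{g_{0,i}}*\rho_{l_j})\circ\pi_{N_j}$ with $g_{0,i}=\langle F_0,e_i\rangle$, so that $|f_{ij}|\le\epsilon_i\|F_0\|_\infty$. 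The $f_{ij}$ are then cylindrical and smooth with the joint $(t,x)$ regularity required in Hypothesis \ref{h2}(iii); the uniform bound $\|F_j\|_\infty\le\|\Lambda\|\,\|F_0\|_\infty$ and the everywhere (hence a.e.) convergence $F_j\to F$ follow from uniform continuity of $F$ and $P_{N_j}x\to x$.

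The heart of the matter is the divergence bound. Here $\operatorname{div}F_j(x)=\big[(\operatorname{div}_{N_j}\widehat F_{N_j})*\rho_{l_j}\big](\pi_{N_j}x)$, and for $z=\xi_1 e_1+\cdots+\xi_{N_j}e_{N_j}\in H_{N_j}$ one computes $\operatorname{div}_{N_j}\widehat F_{N_j}(\xi)=\big[\sum_{i=1}^{N_j}\partial_{e_i}\langle F,e_i\rangle\big](z)$, which is $\ge -C$ by assumption (iii); since mollification by a nonnegative kernel of unit mass preserves lower bounds, $\operatorname{div}F_j\ge -C$ uniformly in $j$. Consequently $(D^*_{N_j}F_j)^+\le(-\operatorname{div}F_j)^+ +\sum_{i=1}^{N_j}|f_{ij}|\,|\beta_{e_i}|\le C+\|F_0\|_\infty\sum_{i=1}^\infty\epsilon_i|\beta_{e_i}|$. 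Choosing $\delta>0$ with $\delta\|F_0\|_\infty\le\delta_0$, where $\delta_0$ is the constant from assumption (iv), gives $e^{\delta(D^*_{N_j}F_j)^+}\le e^{\delta C}\,e^{\delta_0\sum_i\epsilon_i|\beta_{e_i}|}$, whose $\gamma$-integral is finite and independent of $j$; hence $\sup_j C_{F_j}(\delta)\le T\,e^{\delta C}\int_H e^{\delta_0\sum_i\epsilon_i|\beta_{e_i}|}\,d\gamma<\infty$, which is Hypothesis \ref{h2}(iii). The main obstacle is precisely this divergence step: one must check that the distributional lower bound in (iii), stated for every finite partial sum and every point, is inherited by the cylindrically projected and mollified field $F_j$ — this is what allows decoupling the bounded divergence contribution from the only exponentially integrable logarithmic-derivative contribution controlled by (iv).
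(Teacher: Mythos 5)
Your proposal is correct and follows essentially the same route as the paper: cylindrical projection onto $\mathrm{span}\{e_1,\dots,e_N\}$ followed by mollification, with the factor $\epsilon_n$ surviving to give $|f_{nj}|\le\epsilon_n\|F_0\|_\infty$, the observation that mollification by a nonnegative unit-mass kernel preserves the divergence lower bound from (iii), and the resulting estimate $D^*F_j\le C+\|F_0\|_\infty\sum_n\epsilon_n|\beta_{e_n}|$ combined with (iv) after shrinking $\delta$. The collapse $\Psi_{N,M,l}=\Psi_N$ under assumption (i) is also exactly the simplification the paper uses in its Step 3.
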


\begin{proof}
\textbf{Step 1} (definition of $F_{N}$). In the verification of Hypothesis 2
we shall take $N_{j}=j$ hence, for simplicity of notations, we use $N$ in
place of $j$. For every $n,N\in\mathbb{N}$ with $n\leq N$ define
$\widetilde{f}_{n,N}^{0},\widetilde{f}_{n,N}:\left[  0,T\right]
\times\mathbb{R}^{N}\rightarrow\mathbb{R}$ as%
\[
\widetilde{f}_{n,N}^{0}\left(  t,x_{1},...,x_{N}\right)  =\left\langle
F_{0}\left(  t,\sum_{i=1}^{N}x_{i}e_{i}\right)  ,e_{n}\right\rangle
\]%
\[
\widetilde{f}_{n,N}\left(  t,x_{1},...,x_{N}\right)  =\left\langle F\left(
t,\sum_{i=1}^{N}x_{i}e_{i}\right)  ,e_{n}\right\rangle =\epsilon
_{n}\widetilde{f}_{n,N}^{0}\left(  t,x_{1},...,x_{N}\right)  .
\]
For every $N\in\mathbb{N}$, let $\theta^{N}:\mathbb{R}^{N}\rightarrow
\mathbb{R}$ be a smooth probability density with support in the unit ball of
center zero and for every $\delta>0$ set%
\[
\theta_{\delta}^{N}\left(  x\right)  =\delta^{-N}\theta^{N}\left(  \delta
^{-1}x\right)  .
\]
Let $\left(  \delta_{N}\right)  $ be an infinitesimal sequence. Define
$f_{n,N}^{0},f_{n,N}:\left[  0,T\right]  \times\mathbb{R}^{N}\rightarrow
\mathbb{R}$ as%
\[
f_{n,N}^{0}\left(  t,x_{1},...,x_{N}\right)  =\left(  \theta_{\delta_{N}}%
^{N}\ast\widetilde{f}_{n,N}^{0}\left(  t,\cdot\right)  \right)  \left(
x_{1},...,x_{N}\right)  .
\]
\[
f_{n,N}\left(  t,x_{1},...,x_{N}\right)  =\left(  \theta_{\delta_{N}}^{N}%
\ast\widetilde{f}_{n,N}\left(  t,\cdot\right)  \right)  \left(  x_{1}%
,...,x_{N}\right)  =\epsilon_{n}f_{n,N}^{0}\left(  t,x_{1},...,x_{N}\right)  .
\]
Then define%
\[
F_{N}\left(  t,x\right)  =\sum_{n=1}^{N}f_{n,N}\left(  t,\left\langle
x,e_{1}\right\rangle ,...,\left\langle x,e_{N}\right\rangle \right)  e_{n}.
\]
The structure and regularity of $F_{N}\left(  t,x\right)  $ are obviously
satisfied. 

\textbf{Step 2} (convergence of $F_{N}$). We prove here that the sequence of
functions $F_{N}\left(  t,x\right)  $ converges pointwise to $F\left(
t,x\right)  $. Let $\left(  t,x\right)  \in\left[  0,T\right]  \times H$ be
given. From the inequalities%
\begin{align*}
& \left| \sum_{n=1}^{N}f_{n,N}\left(  t,\left\langle x,e_{1}\right\rangle
,...,\left\langle x,e_{N}\right\rangle \right)  e_{n}-\sum_{n=1}^{\infty
}\left\langle F\left(  t,x\right)  ,e_{n}\right\rangle e_{n}\right|
_{H}^{2}\\
& \leq2\sum_{n=1}^{N}\left(  f_{n,N}\left(  t,\left\langle x,e_{1}%
\right\rangle ,...,\left\langle x,e_{N}\right\rangle \right)  -\left\langle
F\left(  t,x\right)  ,e_{n}\right\rangle \right)  ^{2}+2\sum_{n=N+1}^{\infty
}\left\langle F\left(  t,x\right)  ,e_{n}\right\rangle ^{2}\\
& \leq2\sum_{n=1}^{N}\epsilon_{n}^{2}\left(  f_{n,N}^{0}\left(  t,\left\langle
x,e_{1}\right\rangle ,...,\left\langle x,e_{N}\right\rangle \right)
-\left\langle F_{0}\left(  t,x\right)  ,e_{n}\right\rangle \right)
^{2}+2\left\Vert F_{0}\right\Vert _{\infty}^{2}\sum_{n=N+1}^{\infty}%
\epsilon_{n}^{2}%
\end{align*}
and the convergence of $\sum_{n=1}^{\infty}\epsilon_{n}^{2}<\infty$ we see
that it is sufficient to prove%
\[
\lim_{N\rightarrow\infty}\sup_{n\leq N}\left(  f_{n,N}^{0}\left(
t,\left\langle x,e_{1}\right\rangle ,...,\left\langle x,e_{N}\right\rangle
\right)  -\left\langle F_{0}\left(  t,x\right)  ,e_{n}\right\rangle \right)
^{2}=0.
\]
Since (a priori we have to write $\lim\sup$ instead of $\lim$)%
\begin{align*}
& \lim_{N\rightarrow\infty}\sup_{n\leq N}\left(  \left\langle F_{0}\left(
t,\sum_{i=1}^{N}\left\langle x,e_{i}\right\rangle e_{i}\right)  ,e_{n}%
\right\rangle -\left\langle F_{0}\left(  t,x\right)  ,e_{n}\right\rangle
\right)  ^{2}\\
& \leq\lim_{N\rightarrow\infty}\sum_{n=1}^{N}\left\langle F_{0}\left(
t,\sum_{i=1}^{N}\left\langle x,e_{i}\right\rangle e_{i}\right)  -F_{0}\left(
t,x\right)  ,e_{n}\right\rangle ^{2}\\
& \leq\lim_{N\rightarrow\infty}\left|F_{0}\left(  t,\sum_{i=1}%
^{N}\left\langle x,e_{i}\right\rangle e_{i}\right)  -F_{0}\left(  t,x\right)
\right| _{H}^{2}=0
\end{align*}
because of the uniform continuity of $F_{0}$, we see it is sufficient to prove
that
\[
\lim_{N\rightarrow\infty}\sum_{n=1}^{N}\left(  f_{n,N}^{0}\left(
t,\left\langle x,e_{1}\right\rangle ,...,\left\langle x,e_{N}\right\rangle
\right)  -\left\langle F_{0}\left(  t,\sum_{i=1}^{N}\left\langle
x,e_{i}\right\rangle e_{i}\right)  ,e_{n}\right\rangle \right)  ^{2}=0.
\]
Denote $\left\langle F_{0}\left(  t,\sum_{i=1}^{N}\left\langle x,e_{i}%
\right\rangle e_{i}\right)  ,e_{n}\right\rangle $ by $h_{n,N}\left(
t,x\right)  $. We have
\begin{align*}
& \sum_{n=1}^{N}\left\vert f_{n,N}^{0}\left(  t,\left\langle x,e_{1}%
\right\rangle ,...,\left\langle x,e_{N}\right\rangle \right)  -h_{n,N}\left(
t,x\right)  \right\vert ^{2}\\
& =\sum_{n=1}^{N}\left\vert \left(  \theta_{\delta_{N}}^{N}\ast\widetilde{f}%
_{n,N}^{0}\left(  t,\cdot\right)  \right)  \left(  \left\langle x,e_{1}%
\right\rangle ,...,\left\langle x,e_{N}\right\rangle \right)  -h_{n,N}\left(
t,x\right)  \right\vert ^{2}\\
& \leq\int_{\mathbb{R}^{N}}\theta_{\delta_{N}}^{N}\left(  ...,\left\langle
x,e_{j}\right\rangle -x_{j}^{\prime},...\right)  \sum_{n=1}^{N}\left\vert
\left\langle F_{0}\left(  t,\sum_{i=1}^{N}x_{i}^{\prime}e_{i}\right)
,e_{n}\right\rangle -h_{n,N}\left(  t,x\right)  \right\vert ^{2}dx_{1}%
^{\prime}...dx_{N}^{\prime}\\
& \leq\int_{\mathbb{R}^{N}}\theta_{\delta_{N}}^{N}\left(  ...,\left\langle
x,e_{j}\right\rangle -x_{j}^{\prime},...\right)  \left\Vert F_{0}\left(
t,\sum_{i=1}^{N}x_{i}^{\prime}e_{i}\right)  -F_{0}\left(  t,\sum_{i=1}%
^{N}\left\langle x,e_{i}\right\rangle e_{i}\right)  \right\Vert ^{2}%
dx_{1}^{\prime}...dx_{N}^{\prime}.
\end{align*}
Since $\theta^{N}$ has support in the unit ball of center zero, $\theta
_{\delta_{N}}^{N}$ has support in the ball or radius $\delta_{N}$ and center
zero. Denoting by $\eta_{N}$ the numbers (related to modulus of continuity)%
\[
\eta_{N}=\sup_{\left| \sum_{i=1}^{N}x_{i}^{\prime}e_{i}-\sum_{i=1}%
^{N}\left\langle x,e_{i}\right\rangle e_{i}\right|_H \leq\delta_{N}%
}\left| F_{0}\left(  t,\sum_{i=1}^{N}x_{i}^{\prime}e_{i}\right)
-F_{0}\left(  t,\sum_{i=1}^{N}\left\langle x,e_{i}\right\rangle e_{i}\right)
\right|
\]
we have%
\[
\sum_{n=1}^{N}\left\vert f_{n,N}^{0}\left(  t,\left\langle x,e_{1}%
\right\rangle ,...,\left\langle x,e_{N}\right\rangle \right)  -h_{n,N}\left(
t,x\right)  \right\vert ^{2}\leq\eta_{N}^{2}.
\]
Since $\delta_{N}\rightarrow0$ and $F_{0}$ is uniformly continuous, we deduce
$\eta_{N}^{2}\rightarrow0$ and the proof is complete. The proof of the
equi--boundedness of the family $F_{N}\left(  t,x\right)  $ is similar (we only
sketch the main steps):%
\begin{align*}
\left|F_{N}\left(  t,x\right)  \right|_{H}^{2} &  =\sum_{n=1}%
^{N}\left(  f_{n,N}\left(  t,\left\langle x,e_{1}\right\rangle
,...,\left\langle x,e_{N}\right\rangle \right)  \right)  ^{2}\\
&  =\sum_{n=1}^{N}\epsilon_{n}^{2}\left(  f_{n,N}^{0}\left(  t,\left\langle
x,e_{1}\right\rangle ,...,\left\langle x,e_{N}\right\rangle \right)  \right)
^{2}\leq\left\Vert F_{0}\right\Vert _{\infty}^{2}\sum_{n=1}^{\infty}%
\epsilon_{n}^{2}.
\end{align*}

\textbf{Step 3} (exponential bound). Finally, let us check the last
condition of Hypothesis 2. Since $\Psi_{N}\left(  \cdot,y\right)  $ is of
class $C^{2}\left(  \mathbb{R}^{N}\right)  $ and bounded, we can take
$\Psi_{N,M,l}\left(  x,y\right)  =\Psi_{N}\left(  \cdot,y\right)  $. If
$G_{N}\left(  x\right)  =\sum_{n=1}^{N}u_{n}\left(  x\right)  e_{n}$, then,
with the notations used above,
\[
D_{N,M,l}^{\ast}G_{N}\left(  x,y\right)  =-\sum_{n=1}^{N}\left(
\partial_{e_{n}}u_{n}\left(  x\right)  +u_{n}\left(  x\right)  \beta_{e_{n}%
}\left(  x,y\right)  \right)  .
\]
Hence%
\begin{align*}
& D_{N,M,l}^{\ast}F_{N}\left(  t,\left(  x,y\right)  \right)  \\
& =-\sum_{n=1}^{N}\left(  \partial_{e_{n}}f_{n,N}\left(  t,\left\langle
x,e_{1}\right\rangle ,...,\left\langle x,e_{N}\right\rangle \right)
+f_{n,N}\left(  t,\left\langle x,e_{1}\right\rangle ,...,\left\langle
x,e_{N}\right\rangle \right)  \beta_{e_{n}}\left(  x,y\right)  \right)
\end{align*}%
\[
\leq-\left(  \theta_{\delta_{N}}^{N}\ast\sum_{n=1}^{N}\partial_{e_{n}%
}\widetilde{f}_{n,N}\left(  t,\cdot\right)  \right)  \left(  \left\langle
x,e_{1}\right\rangle ,...,\left\langle x,e_{N}\right\rangle \right)
+\sum_{n=1}^{N}\epsilon_{n}\left\vert f_{n,N}^{0}\left(  t,\left\langle
x,e_{1}\right\rangle ,...,\left\langle x,e_{N}\right\rangle \right)
\right\vert \left\vert \beta_{e_{n}}\left(  x,y\right)  \right\vert .
\]
But%
\[
\sum_{n=1}^{N}\partial_{e_{n}}\widetilde{f}_{n,N}\left(  t,x_{1}%
,...,x_{N}\right)  =\sum_{n=1}^{N}\partial_{e_{n}}\left\langle F\left(
t,\sum_{i=1}^{N}x_{i}e_{i}\right)  ,e_{n}\right\rangle \geq-C
\]
hence%
\[
-\left(  \theta_{\delta_{N}}^{N}\ast\sum_{n=1}^{N}\partial_{e_{n}%
}\widetilde{f}_{n,N}\left(  t,\cdot\right)  \right)  \left(  \left\langle
x,e_{1}\right\rangle ,...,\left\langle x,e_{N}\right\rangle \right)  \leq C.
\]
And%
\[
\left\vert f_{n,N}^{0}\left(  t,\left\langle x,e_{1}\right\rangle
,...,\left\langle x,e_{N}\right\rangle \right)  \right\vert \leq\left\vert
\left(  \theta_{\delta_{N}}^{N}\ast\widetilde{f}_{n,N}^{0}\left(
t,\cdot\right)  \right)  \left(  \left\langle x,e_{1}\right\rangle
,...,\left\langle x,e_{N}\right\rangle \right)  \right\vert
\]%
\[
\leq\int_{\mathbb{R}^{N}}\theta_{\delta_{N}}^{N}\left(  ...,\left\langle
x,e_{j}\right\rangle -x_{j}^{\prime},...\right)  \left\vert \left\langle
F_{0}\left(  t,\sum_{i=1}^{N}x_{i}e_{i}\right)  ,e_{n}\right\rangle
\right\vert dx_{1}^{\prime}...dx_{N}^{\prime}\leq\left\Vert F_{0}\right\Vert
_{\infty}.
\]
Summarizing,%
\[
D_{N,M,l}^{\ast}F_{N}\left(  t,\left(  x,y\right)  \right)  \leq C+\left\Vert
F_{0}\right\Vert _{\infty}\sum_{n=1}^{N}\epsilon_{n}\left\vert \beta_{e_{n}%
}\left(  x,y\right)  \right\vert
\]
and thus, finally,%
\begin{align*}
&  \sup_{N\in\mathbb{N}}\int_{E_{N}}\sup_{M,l\in\mathbb{N}}\left(  \int%
_{0}^{T}\int_{\mathbb{R}^{N}}e^{\delta D_{N,M,l}^{\ast}F_{N}\left(  t,\left(
x,y\right)  \right)  }\Psi_{N,M,l}^{2}\left(  x,y\right)  dxdt\right)  \nu
_{N}\left(  dy\right)  dt\\
&  \leq T\int_{H}e^{\delta\left[  C+\left\Vert F_{0}\right\Vert _{\infty}%
\sum_{n=1}^{\infty}\epsilon_{n}\left\vert \beta_{e_{n}}\left(  x\right)
\right\vert \right]  }\nu\left(  dx\right)  <\infty
\end{align*}
for some $\delta>0$.
\qed
\end{proof}

\begin{Definition}
Let $\rho_0\in L^1(H,\gamma))$. A {\em  solution} of the continuity equation \eqref{e1.2}  is a function   $\rho\in L^1(0,T; L^1(H,\gamma)$ such that  $\rho(0,\cdot)=\rho_0$ 
and \eqref{e1.2} is fulfilled.
    \end{Definition}

    If $\rho_0\ln\rho_0\in L^1(H,\gamma)$, in Section 2,  we shall prove existence of a   solution of  \eqref{e1.2}  by introducing the following approximating equation, where $F$ is replaced by $(F_j)$ (fulfilling Hypothesis \ref{h2}) and $\rho_0$ by  $\rho_{j,0}$, where  $(\rho_{j,0})$ is a sequence in $\mathcal F C^1_b$, converging to $\rho_0$ in $L^1(H,\gamma)$:
    \begin{equation}
\label{e1.4.a}
\begin{array}{l}
\ds\int_0^T\int_H\left[D_tu(t,x)+\langle D_xu(t,x),F_j(t,x)  \rangle\right]\,\rho_j(t,x)\,\gamma(dx)\,dt\\
\\
\ds=-\int_Hu(0,x)\,\rho_{j0}(x)\gamma(dx),\quad \forall\;u\in\mathcal F C^1_{b,T},
\end{array}
\end{equation} 
which has a solution $\rho_j$ since $F_j$ is regular. Then we shall show that a subsequence of $(\rho_{j})$ converges weakly to a solution of \eqref{e1.2}.
 In Section 3 we prove uniqueness of solutions to \eqref{e1.2} for a whole class of (non--Gaussian) reference measures $\gamma$ based on an infinite dimensional
analogue of DiPerna--Lions type commutator estimates (see \cite{DiLi89}). 

 We present a whole explicit class of examples to which our results apply, i.e. for which we have both existence and uniqueness of solutions to \eqref{e1.2} (see Example \ref{ex2.10} below).

  To our knowledge,   earliest existence (and uniqueness) results for equation \eqref{e1.2}  concern  the case where $H$ is finite dimensional and the reference   measure is the Lebesgue measure, see the seminal papers    \cite{DiLi89} and \cite{Am04}. If $H$ is infinite dimensional   and $\gamma$ is a Gaussian measure, problem  \eqref{e1.1} has been studied in \cite{AmFi09}, \cite{FaLu10} and \cite{DaFlRoe14}. In \cite{KoRoe14} also   non--Gaussian measures, $\gamma$, e.g. Gibbs measures were   studied. However, only in the case where $F$ does not depend  on $t$.   A very general approach in   metric spaces   has been presented in \cite{AmTr14}, but under the assumption div$_\gamma F$ is bounded.  
  Our assumptions  for getting existence of solutions, however, 
do not require   div$_\gamma F$
 to be bounded and our uniqueness results include cases where the reference measure $\gamma$  is not Gaussian and not even Gibbsian, i.e. the smoothing semigroup $P_\epsilon$ is not symmetric on $L^2(H,\gamma)$.
 
 \medskip

We finish this section with some notations  and preliminaries.  $\mathcal B(H)$ denotes
the set of all Borel  subsets   and $\mathcal P(H)$ the set of all Borel probabilities on $H$. A {\em probability kernel} in $[0,T]$ is a mapping
 $ [0,T]\to\mathcal P(H),\; t\mapsto \mu_t,$
 such that     the mapping $
[0,T]\to \R,\;  t\mapsto \mu_t(I) $
 is measurable for any $I\in\mathcal B(H)$.
$L(H)$ is  the set of all linear bounded operators in $H$,
  $C_b(H)$, $C_b(H;H)$     the space of all real continuous   and bounded mappings   $\varphi\colon H\to \R$ and  $\varphi\colon H\to H$ respectively, endowed with the sup norm 
  $$\|\varphi \|_{\infty}=\sup_{x\in  H}\,|\varphi(x)|,$$
  whereas $C^k_b(H)$, $k>1,$ will denote the  space of all real functions which are continuous and bounded together with their derivatives of order less or equal to $k$.
       $ B_b(H)$ will represent the space of  all  real,  bounded   and Borel mappings on $H$. Moreover,  we shall denote by $\|\cdot\|_p$ the norm in $L^p(H,\gamma)$, $p\in[1,\infty]$. For any $x,y\in H$ we denote either by $\langle x,y\rangle$ or by $x\cdot y$ the scalar product  between $x$ and $y$. Finally,  if $(e_h)$ is an orthonormal basis in $H$ we set $x_h=\langle x,e_h\rangle$ for all $x\in H$ and
$G_h=\langle G,e_h   \rangle,\;h\in\N,$ for all $G\in L^2(H,\nu;H)$. Finally, we state a lemma, needed in what follows, whose straightforward proof is
left to the reader.
\begin{Lemma}
\label{l1.4}
Assume, besides Hypothesis \ref{h1}, that $F\in$ dom $(D_x^*)$ and $\varphi\in C^1_b(H)$. Then $\varphi F\in$ dom $(D_x^*)$ and we have
\begin{equation}
\label{e}
 D_x^*(\varphi F)= \varphi \,D_x^*(F)-\langle D_x\varphi, F\rangle.
 \end{equation}

\end{Lemma}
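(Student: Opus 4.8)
The plan is to verify directly the defining integration-by-parts relation of the adjoint $D_x^*$ on a core. Set
$$
g:=\varphi\,D_x^*F-\langle D_x\varphi,F\rangle .
$$
First I would record the integrability: since $\varphi\in C^1_b(H)$ is bounded, $\varphi F\in L^2(H,\gamma;H)$; and since $\varphi$ and $D_x\varphi$ are bounded while $D_x^*F\in L^2(H,\gamma)$ and $F\in L^2(H,\gamma;H)$, we have $g\in L^2(H,\gamma)$. To conclude that $\varphi F\in\mathrm{dom}(D_x^*)$ with $D_x^*(\varphi F)=g$ it then suffices to show
$$
\int_H\langle D_xv,\varphi F\rangle\,d\gamma=\int_H v\,g\,d\gamma\qquad\text{for all }v\in\mathcal{F}C^1_b .
$$
Indeed, both sides are continuous in $v$ with respect to the graph norm of $D_x$ --- the left-hand side because $\varphi F\in L^2(H,\gamma;H)$, the right-hand side because $g\in L^2(H,\gamma)$ --- and $\mathcal{F}C^1_b$ is dense in $W^{1,2}(H,\gamma)=\mathrm{dom}(D_x)$ in that norm (Remark \ref{r1.1}), so the identity extends from $\mathcal{F}C^1_b$ to all of $\mathrm{dom}(D_x)$, which is exactly the defining property of the adjoint.

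The computation itself is just the Leibniz rule. For $v\in\mathcal{F}C^1_b$ the Fr\'echet product rule gives $D_x(\varphi v)=\varphi\,D_xv+v\,D_x\varphi$, hence
$$
\langle D_xv,\varphi F\rangle=\varphi\,\langle D_xv,F\rangle=\langle D_x(\varphi v),F\rangle-v\,\langle D_x\varphi,F\rangle .
$$
Provided $\varphi v\in\mathrm{dom}(D_x)$, the adjoint relation for $F\in\mathrm{dom}(D_x^*)$ yields $\int_H\langle D_x(\varphi v),F\rangle\,d\gamma=\int_H\varphi v\,D_x^*F\,d\gamma$, and substituting this above and integrating gives precisely $\int_H v\,g\,d\gamma$, as desired.

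The one genuine point to justify --- the main (if mild) obstacle --- is that $\varphi v\in\mathrm{dom}(D_x)$ with the stated derivative, since $\varphi$ is a general element of $C^1_b(H)$ and $\varphi v$ need not be cylindrical. I would settle this by finite-dimensional cylindrical approximation of $\varphi$: letting $P_m$ denote the orthogonal projection onto $\mathrm{span}\{e_1,\dots,e_m\}$ with $e_i\in Y$, the functions $\varphi_m:=\varphi\circ P_m$ lie in $\mathcal{F}C^1_b$, so $\varphi_m v\in\mathcal{F}C^1_b\subset\mathrm{dom}(D_x)$ and the displayed identity holds verbatim with $\varphi_m$ in place of $\varphi$. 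As $m\to\infty$ one has $\varphi_m\to\varphi$ and $D_x\varphi_m=P_m(D_x\varphi)(P_m\,\cdot)\to D_x\varphi$ pointwise, uniformly bounded by $\|\varphi\|_\infty$ and $\|D_x\varphi\|_\infty$ respectively; dominated convergence, with dominating functions $\|\varphi\|_\infty\,|D_xv|_H\,|F|_H$, $\|v\|_\infty\|\varphi\|_\infty\,|D_x^*F|$ and $\|v\|_\infty\|D_x\varphi\|_\infty\,|F|_H$, all in $L^1(H,\gamma)$, then passes the identity to the limit and recovers it for $\varphi$. Equivalently, this argument shows $C^1_b(H)\subset W^{1,2}(H,\gamma)$ with $D_x$ given by the Fr\'echet derivative, which is the standard fact that makes the assertion ``straightforward''; granting it, the proof reduces to the one-line Leibniz computation above.
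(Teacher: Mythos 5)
Your proof is correct. The paper gives no argument here --- it explicitly leaves the ``straightforward'' proof to the reader --- and what you write is precisely the intended one: verify the adjoint relation on the core $\mathcal F C^1_b$ via the Leibniz rule, with the only non-trivial point (that $\varphi v\in \mathrm{dom}(D_x)$ for a non-cylindrical $\varphi\in C^1_b(H)$) settled by the cylindrical approximation $\varphi_m=\varphi\circ P_m$ and dominated convergence. All the domination and density steps you invoke are justified by Remark \ref{r1.1} and the finiteness of $\gamma$, so nothing is missing.
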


 \section{The main existence result}

 First we notice  that  if    $F\in \mbox{\rm dom}\,(D^*_x)$   then a regular solution $\rho$ to \eqref{e1.2}  solves the equation
 \begin{equation}
\label{e2.4a}
\left\{\begin{array}{l}
\ds  D_t\rho+\langle F,D_x\rho  \rangle-D_x^*F\,\rho=0,\\
\\
\rho(0,\cdot)=\rho_0,
\end{array}\right. 
\end{equation}
and vice-versa. In fact, since for all $u\in \mathcal FC^1_{b,T}$
 \begin{equation}
\label{e2.1a}
\int_0^TD_tu(t,x)\,\rho(t,x)\,dt=-\int_0^Tu(t,x)\,D_t\rho(t,x)\,dt-u(0,x)\rho(0,x),\quad x\in H,
\end{equation}
and (thanks to Lemma \ref{l1.4})
\begin{equation}
\label{e2.2a}
\begin{array}{l}
\ds \int_H\langle D_xu(t,x),F(t,x)  \rangle\,\rho(t,x)\,\gamma(dx)=\int_H\langle D_xu(t,x),\rho(t,x)F(t,x)  \rangle\, \gamma(dx)\\
\\
\ds=\int_H u(t,x)\,D_x^*(\rho F)(t,x) \, \gamma(dx)=
\int_H u(t,x)\,\rho(t,x)\,D_x^*F(t,x) \, \gamma(dx)\\
\\
\ds 
-\int_H u(t,x)\, \langle D_x\rho(t,x),F(t,x)   \rangle\, \gamma(dx).
\end{array} 
\end{equation} 
Clearly \eqref{e2.1a} and \eqref{e2.2a} imply that \eqref{e1.2} is equivalent to
 \begin{equation}
\label{e2.3a}
\left\{\begin{array}{l}
\ds\int_0^T\int_H u(t,x)\left[- D_t\rho(t,x)+  D^*_xF(t,x)\rho(t,x)
-\langle D_x\rho(t,x),F(t,x)   \rangle\,\right]\gamma(dx)\,dt=0,\\
\\
\rho(0,\cdot)=\rho_0,
\end{array}\right.
\end{equation} 
  for all $u\in \mathcal FC^1_{b,T}.$
By the density   of $\mathcal V \mathcal FC^1_{b,T}$ in $L^2([0,T]\times H,dt\otimes d\gamma)$ we obtain \eqref{e2.4a}.
 \begin{Theorem}
\label{t2.1n}
Assume that Hypotheses \ref{h1} and \ref{h2} hold. Let $\zeta:=\rho_0\cdot\gamma$ be a probability measure  on $(H,\mathcal B(H))$ such that
\begin{equation}
\label{e2.5n}
\int_H\rho_0\,\ln \rho_0\,d\gamma<\infty.
\end{equation}
Then there exists $\rho: [0,T]\times H\to \R_+$, $\mathcal B([0,T]\times H)$--measurable such that $\nu_t(dx)=\rho(t,x)\gamma(dx)$, $t\in[0,T]$,  are probability measures  on $(H,\mathcal B(H))$ such that
\eqref{e1.1} (equivalently \eqref{e1.2}) holds. In addition,
\begin{equation}
\label{e2.6n}
\int_0^T\int_H\rho(t,x)\,\ln \rho(t,x)\,\gamma(dx)\,dt<\infty.
\end{equation}
\end{Theorem}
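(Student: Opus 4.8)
The plan is to obtain $\rho$ as a weak $L^1$-limit of solutions $\rho_j$ of the regularized equations \eqref{e1.4.a}, with the extraction of a convergent subsequence powered by a uniform entropy estimate. First I would fix, for each $j$, a nonnegative $\rho_{j0}\in\mathcal FC^1_b$ with $\int_H\rho_{j0}\,d\gamma=1$ such that $\rho_{j0}\to\rho_0$ in $L^1(H,\gamma)$ and $\int_H\rho_{j0}\ln\rho_{j0}\,d\gamma\to\int_H\rho_0\ln\rho_0\,d\gamma$ (possible by \eqref{e2.5n}). Since $F_j$ depends only on the coordinates $x_1,\dots,x_{N_j}$ and is smooth and bounded, the disintegration of Lemma \ref{l2.2n} reduces \eqref{e1.4.a} to a one-parameter family (indexed by $y\in E_{N_j}$) of finite-dimensional continuity equations on $\R^{N_j}$ with weight $\Psi^2_{N_j}(\cdot,y)$; after replacing $\Psi_{N_j}$ by the mollified, bounded, strictly positive weights $\Psi_{N_j,M,l}$ of \eqref{e2.15n}--\eqref{e2.16n}, the equation $\partial_t(\rho\,\Psi^2_{N_j,M,l})+\mathrm{div}_x(\rho\,f_j\,\Psi^2_{N_j,M,l})=0$ is solved by transporting the initial measure along the (global, smooth) flow of the non-autonomous ODE $\dot x=f_j(t,x)$, producing a nonnegative mass-preserving solution $\rho_{j,M,l}$. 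Passing first to $l\to\infty$ (using \eqref{e2.15'}) and then $M\to\infty$ would yield a nonnegative solution $\rho_j$ of \eqref{e1.4.a}, equivalently of the form \eqref{e2.4a} with $F$, $\rho_0$ replaced by $F_j$, $\rho_{j0}$.

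The heart of the argument is the a priori entropy bound. Because $\Psi_{N_j,M,l}$ is $C^2$ and bounded away from $0$, the chain rule is legitimate along $\rho_{j,M,l}$ and, using \eqref{e2.12n} and Lemma \ref{l1.2} to identify the divergence \eqref{e2.18n}, one gets $\tfrac{d}{dt}\int_{E_{N_j}}\!\int_{\R^{N_j}}\rho_{j,M,l}\ln\rho_{j,M,l}\,\Psi^2_{N_j,M,l}\,dx\,\nu_{N_j}(dy)=\int_{E_{N_j}}\!\int_{\R^{N_j}}\rho_{j,M,l}\,D^*_{N_j,M,l}F_j\,\Psi^2_{N_j,M,l}\,dx\,\nu_{N_j}(dy)$. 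I would then invoke the Young--Fenchel inequality $a\,b\le a\ln a-a+e^{b}$ with $b=\delta(D^*_{N_j,M,l}F_j)^+$ to split the right-hand side into a term linear in the entropy (feeding a Gronwall loop) and the exponential term $\int(e^{\delta(D^*_{N_j,M,l}F_j)^+}-1)\,\Psi^2_{N_j,M,l}\,dx\,dt$, which after taking $\sup_{M,l}$ and integrating in $y$ is exactly controlled by $C_{F_j}(\delta)\le M$ from Hypothesis \ref{h2}(iii). A Gronwall argument then bounds the entropy uniformly in $M,l$ and, crucially, in $j$; letting $l,M\to\infty$ and using lower semicontinuity of the entropy gives $\sup_{j}\int_0^T\!\int_H\rho_j\ln\rho_j\,d\gamma\,dt<\infty$.

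With the uniform entropy bound in hand, the de la Vall\'ee Poussin criterion shows that $\{\rho_j\}$ is uniformly integrable in $L^1([0,T]\times H,dt\otimes\gamma)$, hence (Dunford--Pettis) weakly relatively compact, and I would extract $\rho_{j_k}\rightharpoonup\rho$. Passing to the limit in \eqref{e1.4.a}, the terms $D_tu\,\rho_j$ and $u(0,\cdot)\rho_{j0}$ converge by weak, resp.\ strong, convergence, while the drift term $\langle D_xu,F_j\rangle\rho_j$ is handled by a weak--strong argument: the multipliers $\langle D_xu,F_j\rangle$ are uniformly bounded and converge $dt\otimes\gamma$-a.e.\ to $\langle D_xu,F\rangle$ by Hypothesis \ref{h2}(iii), so, combined with the uniform integrability of $\rho_j$ (an Egorov splitting), their pairing converges to $\langle D_xu,F\rangle\rho$. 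This shows $\rho$ solves \eqref{e1.2}.

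Finally $\rho\ge0$ is preserved in the limit; taking spatially constant test functions in \eqref{e1.2} gives $\int_H\rho(t,\cdot)\,d\gamma=1$, so the $\nu_t$ are probabilities; and \eqref{e2.6n} follows from the weak lower semicontinuity of the convex functional $\rho\mapsto\int\rho\ln\rho$. I expect the main obstacle to be the uniform entropy estimate itself: making the formal differentiation rigorous requires the full $\Psi_{N,M,l}$-regularization machinery so that the exponential term matches the precise quantity $C_{F_j}(\delta)$ of Hypothesis \ref{h2}(iii) (in particular the ``$-1$'' and the $\sup_{M,l}$ are what keep the relevant integrals finite despite the weights only being bounded below), and the weak--strong passage in the drift term must be carried out with care since $\rho_j$ converges only weakly.
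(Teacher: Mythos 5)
Your proposal follows essentially the same route as the paper's proof --- disintegration to $\R^{N_j}$, the mollified weights $\Psi_{N_j,M,l}$, the flow/Feynman--Kac representation of the regularized solutions, the Young--Fenchel-plus-Gronwall entropy estimate whose exponential term is exactly $C_{F_j}(\delta)$ from Hypothesis \ref{h2}(iii), weak $L^1$ compactness from the uniform entropy bound, and lower semicontinuity for \eqref{e2.6n} --- with only cosmetic variations (your Egorov splitting for the drift term replaces the paper's Young-inequality estimate \eqref{e2.9m}, and both work). The one detail to correct is that the approximating initial densities must be taken in $\mathcal F C^1_0$ rather than $\mathcal F C^1_b$: since $\Psi^2_{N_j,M,l}\ge M^{-1}$ on all of $\R^{N_j}$, the transported density must be compactly supported for the mass and entropy integrals of the $(M,l)$-regularized problems to be finite, which is precisely what \eqref{e2.10primo} and Corollary \ref{cC3} supply.
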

\begin{proof}
By disintegration we shall reduce the proof to the case $H=\R^N$ and by regularization to Corollary \ref{cA2} in Appendix A.   Let  $\{e_n,\,n\in\N\}$  be the  orthonormal basis  from Hypothesis \ref{h2}(ii)\medskip

\noindent{\bf Case 1}. Suppose first that  $F:[0,T]\times H\to H  $ is as an $F_j$ from   Hypothesis \ref{h2}(iii),
$ \rho_0\in \mathcal FC^1_{0}$, $\rho_0\ge 0$.\medskip

 Hence for some  $N\in\N$ (which we fix below and shall no longer explicitly express in the notation below, i.e. write $\Psi_{N,M,l}$ as $\Psi_{M,l}$, $E$ instead of $E_N$, etc.)
\begin{equation}
\label{e2.7n}
F(t,x)=\sum_{i=1}^Nf_i(t,x)\,e_i,\quad (t,x)\in[0,T]\times H,
\end{equation}
where for $1
\le i\le N$, 
$$
f_i(t,x)=\widetilde{f_i}(t,\langle e_1,x  \rangle,...,\langle e_N,x  \rangle)
$$
and
$$
\rho_0(x)=\widetilde{\rho_0}(\langle e_1,x  \rangle,...,\langle e_N,x  \rangle)
$$
with   $\widetilde{\rho_0}\in C^1_0(\R^N)$ and $\widetilde f_i$ as in Hypothesis \ref{h2}(iii).

Then by Corollary \ref{cA2} applied with $\Psi=\Psi^2_{M,l}(\cdot,y)$, we know that
\begin{equation}
\label{e2.17}
 \rho_{M,l}(t,(x,y)):=\rho_0(\xi(T,T-t,x))\,e^{\int_0^tD^*_{M,l}\,F(T-u,(\xi(T-u,T-t,x),y))\,du},\quad (t,x)\in[0,T]\times \R^N,
\end{equation}
where (see Lemma\ref{l1.2} and \eqref{e2.18n})
\begin{equation}
\label{e2.18nn}
D^*_{M,l}F_j(r,(x,y)):=-\sum_{i=1}^N \left(  \partial_{e_i}f_{ij}(t,x)+f_{ij}(t,x)\,\frac{\partial }{\partial x_i}\,\Psi_{M,l}^2(x,y)/\Psi_{M,l}^2(x,y)  \right),
\end{equation}
$r\in[0,T],\;x\in\R^N$, solves
\begin{equation}
\label{e2.19n}
\left\{\begin{array}{l}
\ds D_t\rho_{M,l}(t,(x,y))+ \langle F(t,x),D_x\rho_{M,l}(t,(x,y))   \rangle-D^*_{M,l}F(t,(x,y))\rho_{M,l}(t,(x,y))=0,\\
\\
\rho_{M,l}(0,(x,y))
=\rho_0(x).
\end{array}\right. 
\end{equation}
 Since $\widetilde\rho_0$ has compact support in $\R^N$ and since $F$ is bounded, we see from \eqref{e2.17} that there exists a closed ball $K_R\subset\R^N$, centred at zero and radius $R\ge 1$, 
such that
\begin{equation}
\label{e2.10primo}
\rho_{M,l}(t,(\cdot,y))=0\quad\mbox{\rm on}\, \R^N\setminus K_R\;\mbox{\rm for all}\,(t,y)\in [0,T]\times E;\,M,l\in\N.
\end{equation}
 Furthermore, rewriting \eqref{e2.17} as \eqref{e2.4a} one easily sees that  for all $t\in[0,T]$
\begin{equation}
\label{e2.18'}
\int_{\R^N}\rho_{M,l}(t,(x,y))\,\Psi^2_{M,l}(x,y)\,dx=\int_{\R^N}\rho_{0}(x)\,\Psi^2_{M,l}(x,y)\,dx.
 \end{equation}
Below all statements are claimed to  hold for $\nu$-a.e., $y\in E$.

We need a few further lemmas of which the first is the most crucial, to prove Case 1. 
\begin{Lemma}
\label{l2.3n}
Let  $\epsilon>0.$ Then for all $1 \le i\le N,$ $l,M\in\N$
\begin{equation}
\label{e2.20n}
\begin{array}{l}
\ds \int_{\R^N}\left(\exp\left[\epsilon\left|  \left( \frac{\partial \Psi_{M,l}^2}{\partial x_i}\,/\Psi_{M,l}^2 \right)(x,y)  \right|    \right]-1\right)\,\Psi_{M,l}^2(x,y)\,dx\\
\\
\ds \le \int_{\R^N}\left(\exp\left[\epsilon\left|  \left(\frac{\partial \Psi_{M}^2}{\partial x_i}/\Psi_{M}^2\right)(x,y)  \right|    \right]-1\right)\,\Psi_{M}^2(x,y)\,dx\\
\\
\ds \le \int_{\R^N}\left(\exp\left[\epsilon\left|  \beta_{e_i}(x,y)  \right|    \right]-1\right)\,\Psi^2(x,y)\,dx.
\end{array} 
\end{equation}

\end{Lemma}
\begin{proof}
Obviously, the left hand side of \eqref{e2.20n}   is equal to
\begin{equation}
\label{e2.21n}
\int_{\R^N}\left(\exp\left[\epsilon\left|\int_{\R^N}\left(  \frac{\partial \Psi_{M}^2}{\partial x_i}\,  /\Psi_{M}^2\right)(\tilde x,y)     \,\Psi_{M}^2(\tilde x,y)\,\delta_l(x-\tilde x)\,d\tilde x \,(\Psi_{M,l}^2(x,y))^{-1}\right|\right]-1\right) \Psi_{M,l}^2(x,y))dx.
 \end{equation}
 Taking the modulus under the integral and
 applying Jensen's inequality for fixed $x\in\R^N$ to the probability measure
 $$
 \Psi_{M,l}^2(x,y))^{-1}\,\Psi_{M}^2(\tilde x,y) \,\delta_l(x-\tilde x)\,d\tilde  x
 $$
and the convex function $r\mapsto e^{\epsilon r}-1,\,r\ge 0$, we obtain that  \eqref{e2.21n} is dominated by
$$
\int_{\R^N}\int_{\R^N}\left(\exp\left[\epsilon\left(\left|  \frac{\partial \Psi_{M}^2 }{\partial x_i}\,\right| /\Psi_{M}^2\right)(\tilde x,y)\right]-1\right)     \,\Psi_{M}^2(\tilde x,y)\,\delta_l(x-\tilde x)\,d\tilde x \,dx.
$$
By Young's inequality and since $\|\delta_l\|_{L^1(\R^N)}=1,$ the latter is dominated by
\begin{equation}
\label{e2.22n}
\int_{\R^N}\left( \exp\left[\epsilon\left(\left|  \frac{\partial \Psi_{M}^2}{\partial x_i}\, \right| /\Psi_{M}^2\right)( x,y)\right] -1\right)    \,\Psi_{M}^2(x,y) \,dx.
\end{equation}
Hence the fist inequality in \eqref{e2.20n} is proved. To show the second we note that
$$
\frac{\partial \Psi_{M}^2 }{\partial x_i}(\cdot,y) ={\mathds 1}_{\{M^{-1}<\Psi^2(\cdot,y)<M\}}\,\frac{\partial \Psi^2}{\partial  x_i}(\cdot,y),\quad dx\mbox{\rm--a.s.}.
$$
Hence the integral in \eqref{e2.22n} is dominated by
$$
\int_{\R^N} {\mathds 1}_{\{M^{-1}<\Psi^2(\cdot,y)<M\}}\left(\exp\left[\epsilon \left(\left|  \frac{\partial  \Psi^2 }{\partial x_i}\,\right| /\Psi^2\right)( x,y)\right]-1\right)     \,\Psi^2(x,y) \,dx,
$$
which in turn by \eqref{e2.12n} is dominated by
the last integral in \eqref{e2.20n}
\qed
\end{proof}
\begin{Lemma}
\label{l2.4n}
For $\delta>0$ let $C_F(\delta)$ and $C_F(\delta,y)$ be as in Hypothesis \ref{h2}(iii).
Then  for
$$
\delta:=\inf_{1\le i\le N}\,\frac{c_i}{N( \|f_i\|_\infty+1)},
$$
we have 
 $$
\begin{array}{l}
\ds C_F(\delta,y)\le\sup_{M,l\in\N}\int_0^T\int_{\R^N} \Bigg(\exp\left[-\delta\sum_{i=1}^N \partial_{e_i} f(t,x)\right] ^+\\
\\
\ds\times \exp \left[\delta\sum_{i=1}^N \|f_i\|_\infty\left(\left|  \frac{\partial \Psi_{M,l}^2}{\partial x_i}\, \right| \Psi_{M,l}^2\right)( x,y)\right] -1\Bigg)    \,\Psi_{M,l}^2(x,y) \,dx\,dt<\infty
\end{array} 
$$
and $C_F(\delta)<\infty$.
\end{Lemma}
\begin{proof}
By \eqref{e2.12n}, \eqref{e2.13n} and
convexity of the function $r\mapsto be^{ar}-1,\,r\ge 0$, for $a,b>0$,
this follows immediately from Lemma \ref{l2.3n} and \eqref{e1.8primo}.
\qed
\end{proof}
\begin{Lemma}
\label{l2.5n}
(i) We have for all $M\in\N$, $t\in[0,T]$
$$
\lim_{l\to\infty}D^*_{M,l}F(t,(x,y))=-\sum_{i=1}^N\left[\partial_{e_i}f_i(t,x)+f_i(t,x)\left(  \frac{\partial \Psi_{M}^2}{\partial x_i}\, /\Psi_{M}^2\right)( x,y)  \right] =:D^*_{M}F(t,(x,y)), 
$$
and
$$
\lim_{M\to\infty}D^*_{M}F(t,(x,y))=-\sum_{i=1}^N  \left[\partial_{e_i}f_i(t,x)+f_i(t,x)\beta_{e_i}( x,y)  \right] =D^*_{x}F(t,(x,y)),
$$
in $L^1_{loc}(\R^N,dx)$.

(ii) Let $\rho_M$ and $\rho$ be defined as $\rho_{M,l}$ with $D^*_{M,l}F$ replaced by $D^*_{M}F$  and $D^*_{x}F$ respectively. 

Then there exist subsequences $(l_k)_{k\in\N}$, $(M_k)_{k\in\N}$ such that  we have for $dx$--a.e. $x\in \R^N$,  for all $M\in \N$
$$
\lim_{k\to\infty}\rho_{M,l_k}(t,(x,y))=\rho_{M}(t,(x,y)),\quad\forall\;t\in[0,T]
$$
and
$$
\lim_{k\to\infty}\rho_{M_k}(t,(x,y))=\rho(t,(x,y)),\quad\forall\;t\in[0,T].
$$
\end{Lemma}
\begin{proof}
(i) Obviously, for  all $M\in\N$ by \eqref{e2.15'}
$$
\lim_{l\to\infty}D^*_{M,l}F(t,(\cdot,y))=D^*_{M}F(t,(\cdot,y)),\quad\mbox{\rm in $L^1_{loc}(\R^N,dx)$},\,\forall\;t\in[0,T].
$$
 The second assertion follows, because
 \begin{equation}
\label{e2.21'}
\left(  \frac{\partial \Psi_{M}^2}{\partial x_i}\, /\Psi_{M}^2\right)( x,y) ={\mathds 1}_{(M^{-1},M)} \,  (\Psi^2( x,y))\left( \frac{\partial \Psi^2}{\partial x_i}\, /\Psi^2\right)( x,y)  .
 \end{equation}

 (ii) Fix $t\in[0,T]$. Then for all $u\in[0,t]$
 $$
 x\mapsto \xi(T-u,T-t,x)
 $$
 is a $C^1$--diffeomorphism on $\R^N$.
 Let $\phi_{u,t}:\R^N\to\R^N$ be its inverse (i.e.  just the corresponding backward flow).
 Then for every $K\subset\R^N$, $K$ compact, and $\Delta D^*_{M,l}F:=|D^*_{M}F-D^*_{M,l}F|$ we have
 $$
 \begin{array}{l}
\ds \int_K\int_0^t\Delta D^*_{M,l}F(T-u,(\xi(T-u,T-t,x),y)\,du\,dx\\
\\
\ds=  \int_0^t\int_{\xi(T-u,T-t,K)}\Delta D^*_{M,l}F(T-u, (x,y))\,|\det D\phi_{u,t}(x)|\,dx\,du.
\end{array} 
 $$
 Since $F$ is bounded, there exists a ball $B_R(0)$ so that for large enough $R>0,$
 $\xi(T-u,T-t,K)\subset B_R(0)$ for all $t\in[0,T]$. Hence by Fubini's Theorem the above integral is dominated by
 \begin{equation}
 \label{e2.15bis}
\int_{B_R(0)} \int_0^t |\det D\phi_{u,t}(x)| \Delta D^*_{M,l}F(T-u, (x,y))\,\,dx\,du.
 \end{equation}
 The specific dependence of $F$ on $T-u$ and the well known explicit formula of $\det D\phi_{u,t}$ (recall $\phi_{u,t}$ is a flow)     implies that
 $$
 x\mapsto \int_0^t |\det D\phi_{u,t}(x)| \,\widetilde f_i(T-u,x)\,du
 $$
 is locally bounded on $\R^N$, so that  \eqref{e2.15'}  can be applied to show that the term in \eqref{e2.15bis} converges to zero as $l\to\infty$ . So, the first assertion follows. Then also the second assertion follows by  \eqref{e1.14m}, \eqref{e2.21'} and the same arguments.
\qed
\end{proof}

 \begin{Lemma}
\label{l2.6n}
Let  $l,M\in\N$. Then for all $t\in[0,T]$ and $\delta>0$
\begin{equation}
\label{e2.24n}
\begin{array}{l}
\ds \int_{\R^N}\rho_{M,l}(t,(x,y))\left(  \ln \rho_{M,l}(t,(x,y))-1 \right) \Psi^2_{M,l}(x,y)\,dx\\
\\
\ds\le e^{t/\delta}\Bigg[\int_{\R^N}\rho_0(x)|\ln \rho_0(x)-1|\Psi^2_{M,l}(x,y)\,dx
\ds +C_F(\delta,y)\\
\\
\ds+\frac{t}\delta|\ln \delta|\int_{\R^N}\rho_0(x) \Psi^2_{M,l}(x,y)\, \,dx +\frac{t}{M}|K_{R+1}|+t\int_{\R^N}\Psi^2(x,y)\,dx\Bigg]
\end{array} 
\end{equation}
where $C_F(\delta,y)$ is as defined in Hypothesis \ref{h2}(iii) and $|K_{R+1}|$ denotes the Lebesgue measure of the ball $K_{R+1}\subset \R^N$, centred at $0$ and radius $R+1$, where $R$ is as in \eqref{e2.10primo}.
\end{Lemma}
\begin{proof}
Since $\rho_{M,l}(t,(\cdot,y)$ has compact support in $\R^N$ for all $(t,y)\in [0,T]\times E$ by the regularity properties of  $\rho_{M,L}$ stated in Corollary \ref{cA2} of  Appendix A, all integrals below are well defined. Since $M,l\in\N$ and  $y\in E$ are fixed, for simplicity of notation we denote the maps
$ x\mapsto \rho_{M,l}(t,(x,y))$
and $ x\mapsto \Psi_{M,l}(x,y)$ by $\rho(t)$, $\Psi$ respectively.
Then for $t\in[0,T]$,
$$
\begin{array}{l}
\ds \int_{\R^N}\rho(t)(\ln \rho(t)-1)\, \Psi^2dx\\
\\
\ds= \int_{\R^N}\rho_0(\ln \rho_0-1)\, \Psi^2dx+\int_{\R^N}\int_0^t\tfrac{d}{ds}[\rho(s)(\ln \rho(s)-1)]\,ds\, \Psi^2dx\\
\\
\ds=\int_{\R^N}\rho_0(\ln \rho_0-1)\, \Psi^2dx+\int_{\R^N}\int_0^t\ln \rho(s)D_s\rho(s)\,ds\, \Psi^2dx\\
\\
\ds= \int_{\R^N}\rho_0(\ln \rho_0-1)\, \Psi^2dx -\int_0^t\int_{\R^N} \langle F(s,x),D_x(\rho(s)(\ln\rho(s)-1))  \rangle \, \Psi^2dx\,ds\\
\\
\ds+\int_0^t\int_{\R^N}  D^*_{M,l}\,F(s,(\cdot,y))\rho(s)\,\ln\rho(s)    \,\Psi^2dx\,ds\\
\\
\ds=\int_{\R^N}\rho_0(\ln \rho_0-1)\,\Psi^2dx+\int_0^t\int_{\R^N}  D^*_{M,l}\,F(s,(\cdot,y))\rho(s)    \,\Psi^2dx\,ds\\
\\
\ds\le \int_{\R^N}\rho_0(\ln \rho_0-1)\, \Psi^2dx+\int_0^t\int_{\R^N}   \left[  e^{\delta(D^*_{M,l}\,F(s,(\cdot,y))^+}-1 +\tfrac1\delta\,\rho(s)\,(\ln (\tfrac1\delta\, \rho(s))-1)\right] \,\Psi^2dx\,ds\\
\\
\ds +t\int_{K_R}\Psi^2(x,y)\,dy,
\end{array} 
$$
where in the third equality we used \eqref{e2.19n}, in the fourth equality we used Fubini's theorem and the definition of  $D^*_{M,l}$   and finally, in the last inequality we used \eqref{e2.10primo}  and that $ab\le e^a+b(\ln b-1)$ for $a,b\ge 0$.
Now the assertion follows by  Gronwall's lemma, since by \eqref{e2.18'}
\begin{equation}
\label{e2.25n}
\int_{\R^N}\rho_{M,l}(t,(x,y))\,\Psi_{M,l}^2(x,y)\, dx=\int_{\R^N}\rho_0(x)\,\Psi_{M,l}^2(x,y)\, dx,\quad \forall\; \in[0,T],
 \end{equation}
and since 
 $$
\int_{K_R} \Psi_{M,l}^2(x,y)\,dx\le \frac1{M}\,|K_{R+1}|+\int_{\R^N} \Psi^2(x,y)\,dx.
 $$
 \qed
\end{proof}

\begin{Lemma}
\label{l2.7n}
Let  $M\in\N$, $\rho_{M,l,y}(t,x):=\rho_{M,l}(t,(x,y)),\; t\in [0,T]$, $x\in\R^N$, and $\Psi_{M,l,y}(x):=\Psi_{M,l}(x,y)$, $x\in\R^N$. Then $\{\rho_{M,l,y}\cdot \Psi^2_{M,l,y}:\,l\in\N   \}$ is uniformly integrable with respect to the measure $\chi(x)\,dx\,dt$, where $\chi$ is the indicator function of an arbitrary compact set in $\R^N$.
\end{Lemma}
\begin{proof}
Let $c\in(1,\infty)$. Then for all $l\in\N$ and $\rho_{l}:=\rho_{M,l,y}$, $\Psi_{l}:=\Psi_{M,l,y}$,
$$
\begin{array}{l}
\ds \int_0^T\int_{\R^N}{\mathds 1}_{\{\rho_l\Psi_l^2\ge c\}}\,\rho_l\Psi_l^2\,\chi\,dx\,dt\le \tfrac1{\ln c} \int_0^T\int_{\R^N}{\mathds 1}_{\{\rho_l\Psi_l^2\ge c\}}\,(\ln \rho_l+\ln\Psi_l^2)\rho_l\Psi_l^2\,\chi\,dx\,dt\\
\\
\ds\le \tfrac1{\ln c} \int_0^T\int_{\R^N} |\rho_l\ln \rho_l|\Psi_l^2\, dx\,dt+\tfrac{\ln(M+1)}{\ln c} \int_0^T\int_{\R^N}\rho_l\Psi_l^2\, dx\,dt.
\end{array} 
$$
Since $r\ln r-r\ge -1$, $r\in[0,\infty)$,   it follows by Lemma \ref{l2.6n} and \eqref{e2.25n}, that both integrals on the right hand side of the last inequality are uniformly bounded in $l$ and the assertion follows.
\qed
\end{proof}

Now we proceed with the proof of Case  1 of Theorem \ref{t2.1n}. It follows by \eqref{e2.19n} (analogously to  \eqref{e2.4a}--\eqref{e2.3a} above) that for all
\begin{equation}
\label{e2.26n}
u(t,x):=g(t)f(x),\quad t\in [0,T],\, x\in \R^N,
 \end{equation}
$g\in C^1([0,T];\R)$ with $g(T)=0$ and $f\in C^1_0(\R^N)$
that
\begin{equation}
\label{e2.27n}
\begin{array}{l}
\ds  \int_0^T\int_{\R^N} \left[D_tu(t,x)+
\langle D_xu(t,x), F(t,x)   \rangle 
   \right]\rho_{M,l}(t,(x,y))\,\Psi_{M,l}^2(x,y)\,dx\,dt\\
   \\
   \ds=-\int_{\R^N}u(0,x)\rho_0(x)\,\Psi_{M,l}^2(x,y)\,dx.
\end{array} 
\end{equation}
By Lemma \ref{l2.5n}(ii) and Lemma \ref{l2.7n} we can pass to the limit in
\eqref{e2.27n} along the subsequence $(l_k)_{k\in\N}$ from Lemma \ref{l2.5n} to conclude that for such $u$
\begin{equation}
\label{e2.28n}
\begin{array}{l}
\ds  \int_0^T\int_{\R^N} \left[D_tu(t,x)+
\langle D_xu(t,x), F(t,x)   \rangle 
   \right]\rho_{M}(t,(x,y))\,\Psi_{M}^2(x,y)\,dx\,dt\\
   \\
   \ds=-\int_{\R^N}u(0,x)\rho_0(x)\,\Psi_{M}^2(x,y)\,dx.
\end{array} 
\end{equation}
We can also pass to the limit in \eqref{e2.25n} to get
\begin{equation}
\label{e2.29n}
\int_{\R^N}\rho_M(t,(x,y))\,\Psi_M^2(x,y)\, dx=\int_{\R^N}\rho_0(x)\,\Psi_M^2(x,y)\, ,dx,\quad \forall\; t\in[0,T].
 \end{equation}
 Furthermore, by Lemma \ref{l2.5n}(ii) and Lemma \ref{l2.6n}  we deduce from \eqref{e2.24n} by Fatou's lemma that for all $t\in[0,T]$, $\delta>0$
 \begin{equation}
\label{e2.30n}
\begin{array}{l}
\ds \int_{\R^N}\rho_M(t,(x,y))(\ln \rho_M(t,(x,y))-1)\,\Psi_M^2(x,y)\,dx\\
\\
\ds\le e^{t/\delta}\Big[\int_{\R^N}\rho_0(x)|\ln \rho_0(x)-1|\,\Psi_M^2(x,y)\,dx+C_F(\delta,y)+\tfrac{t}\delta|\ln \delta|\int_{\R^N}\rho_0(x) \,\Psi_M^2(x,y)\,dx\\
\\
\ds+ \frac{t}{M}\,|K_{R+1}|+t\int_{\R^N} \Psi^2(x,y)\,dx\Big].
\end{array} 
\end{equation}

Taking now the subsequence $(M_k)_{k\in\N}$ from Lemma \ref{l2.5n} instead of $M$ and using  exactly analogous arguments as above, we can pass to the limit in \eqref{e2.28n}, \eqref{e2.29n} and \eqref{e2.30n} to obtain that for all $u$ as in \eqref{e2.26n}
\begin{equation}
\label{e2.31n}
\begin{array}{l}
\ds  \int_0^T\int_{\R^N} \left[D_tu(t,x)+
\langle D_xu(t,x), F(t,x)   \rangle 
   \right]\rho(t,(x,y))\,\Psi^2(x,y)\,dx\,dt\\
   \\
   \ds=-\int_{\R^N}u(0,x)\rho_0(x)\,\Psi^2(x,y)\,dx,
\end{array} 
\end{equation}
and for all $t\in[0,T]$
\begin{equation}
\label{e2.32n}
\int_{\R^N}\rho(t,(x,y))\,\Psi^2(x,y) \,dx=\int_{\R^N}\rho_0(x)\,\Psi^2(x,y)\,dx,
 \end{equation}
 and for all $t\in[0,T]$, $\delta>0$
 \begin{equation}
\label{e2.33n}
\begin{array}{l}
\ds \int_{\R^N}\rho(t,(x,y))(\ln \rho(t,(x,y))-1)\,\Psi^2(x,y)\, dx\\
\\
\ds\ds\le e^{t/\delta}\Big[\int_{\R^N} \rho_0(x)|\ln \rho_0(x)-1|\,\Psi^2(x,y) \,dx+C_F(\delta,y)+\tfrac{t}\delta|\ln \delta|\int_{\R^N}\rho_0(x) \,\Psi^2(x,y)\,dx\\
\\
\ds + t\int_{\R^N} \Psi^2(x,y)\,dx \Big].
\end{array} 
\end{equation}
Taking the special  $\delta$ from Lemma \ref{l2.4n} and 
$C_F(\delta,y)$  as in Lemma \ref{l2.5n}  in the situation of Case 1 the assertion of Theorem \ref{t2.1n} now follows easily from the disintegration formula \eqref{e2.10n}, integrating  \eqref{e2.31n} with respect  to $\nu$ and by approximating the functions $u$ in \eqref{e1.1} in the obvious way. From  \eqref{e2.33n} we get \eqref{e2.6n} after  integrating   over $y$ with respect  to $\nu$.
\qed
\end{proof}
\begin{Remark}
\label{r2.8}
\em (i) We here emphasize that in the situation of Case 1  we have an explicit formula for the solution density in  \eqref{e2.31n} given by 
\begin{equation}
\label{e2.34n}
\rho(t,(x,y))=\rho_0 (\xi(T,T-t,x))e^{-\int_{0}^{t}D_x^*F(T-u 
,\xi(T-u ,T-t,y))du }
 \end{equation}
for $t\in[0,T]$ and $dx$--a.e. $x\in \R^N$ with $\xi$ given as in Corollary \ref{cA2} of Appendix A.

(ii) Integrating \eqref{e2.33n} over $y\in E$ with respect to $\nu$, from Lemma \ref{l2.2n} we obtain that for all $t\in[0,T]$, $\delta>0$
 \begin{equation}
\label{e2.35n}
\begin{array}{l}
\ds \int_{H}\rho(t,x)(\ln \rho(t,x)-1)\,\gamma(dx) \le e^{t/\delta}\left[\int_{H} \rho_0|\ln \rho_0-1|\,d\gamma+C_F(\delta)+\tfrac{t}\delta|\ln \delta|\int_{H}\rho_0\,d\gamma+t\gamma(H)\right]
\end{array} 
\end{equation}
and likewise from  \eqref{e2.32n}
 that for  all  $t\in[0,T]$
\begin{equation}
\label{e2.35'n}
\int_{H}\rho(t,x)\,\gamma(dx)=\int_{H}\rho_0(x)\,\gamma(dx) =1.
 \end{equation}
 \end{Remark}\medskip

 {\bf Case 2}. Let $F_j,\,j\in\N,$ be as in Hypothesis \ref{h2}. Choose nonnegative $\rho_{0,j}\in \mathcal FC^1_0$ such that
 \begin{equation}
\label{e2.36n}
\lim_{j\to\infty}\rho_{0,j}=\rho_{0}\quad\mbox{\rm in}\;L^1(H,\gamma)
 \end{equation}
 and
  \begin{equation}
\label{e2.37n}
\sup_{j\in\N}\int_H\rho_{0,j}\,\ln \rho_{0,j}\,d\gamma<\infty. 
 \end{equation}
For existence of such $\rho_{0,j},\,j\in\N,$  see Corollary \ref{cC3} in Appendix C below.
 
 Let $\rho_j$ be the corresponding solutions to \eqref{e1.1} with  $F_j$ replacing   $F$ and   $\zeta:=\rho_0\cdot \gamma$, which exist by Case 1. Then by \eqref{e2.35n} with $\rho_j, F_j, \rho_{0,j}$ replacing $\rho, F$ and $\rho_0$ respectively, Hypothesis \ref{h2} and \eqref{e2.35'n} imply that
 \begin{equation}
\label{e2.38n}
\sup_{j\in \N}\,\sup_{t\in[0,T]}\int_H\rho_j(t,x)\,\ln \rho_j(t,x)\,\gamma(dx)<\infty.
 \end{equation}
 
 By Case 1 we have for all $u\in \mathcal FC^1_{b,T}$
 \begin{equation}
  \begin{array}{l}
\label{e2.8m}
\ds\int_0^T\int_H\left[\frac{d}{dt}\,u(t,x)+\langle D_xu(t,x),F_j(t,x)  \rangle_H\right]\,\rho_j(t,x)\,\gamma(dx)\,dt\\
\\
\ds =-\int_Hu(0,x)\,\rho_{0,j}(x)\gamma(dx).
\end{array}
\end{equation} 
So, by \eqref{e2.36n} we only have to consider the convergence of the left hand side of  \eqref{e2.8m}, more precisely only the part of it involving $F_j$. But
\begin{equation}
\label{e2.9m}
\begin{array}{l}
\ds \left|\int_0^T\int_H (\langle D_xu,F_j  \rangle_H\,\rho_j-\langle D_xu,F  \rangle_H\,\rho)\,d\gamma\,dt\right|\\
\\
\ds\le \|Du\|_\infty\int_0^T\int_H|F_j-F|_H\,\rho_j\,d\gamma\,dt+
\left|\int_0^T\int_H  \langle F,Du \rangle\,(\rho_j-\rho) \,d\gamma\,dt\right|
\end{array} 
\end{equation}
Because of the boundedness of $ \langle F,Du \rangle$ the second term on the right hand side of \eqref{e2.9m} converges to $0$ if $j\to\infty$. Let $\epsilon>0$. Then, by Young's inequality,
the first  term on the right hand side of \eqref{e2.9m}  is up to a constant dominated by
$$
\int_0^T\int_H e^{\frac1\epsilon|F_j-F|_H} \,d\gamma\,dt+\epsilon
\int_0^T\int_H \rho_j\ln(\epsilon\rho_j)\,d\gamma\,dt,
$$
of which the first summand converges to zero  as $j\to\infty$, since $F_j,F$ are uniformly bounded, while the second summand is dominated by
$$
\epsilon
\int_0^T\int_H \rho_j\ln\rho_j\,d\gamma\,dt+\epsilon\ln\epsilon,
$$
which can be made arbitrarily small uniformly in $j$ because of \eqref{e2.38n}. Hence putting all this together we conclude that the right hand side of \eqref{e2.9m} converges to $0$  as $j\to\infty.$  \eqref{e2.6n}
then follows by weak lower semi--continuity.
Finally from \eqref{e2.35'n} and \eqref{e2.36n} it follows that $\nu_t(dx):=\rho(t,x)\,\gamma(dx)$ is a probability measure for all $t\in[0,T]$. Thus Theorem \ref{t2.1n} is completely proved.
 \begin{Remark}
 \label{r2.9}
 \em
 Though the finite entropy condition in the initial measure $\rho_0$ is crucial in the proof of Theorem \ref{t2.1n}, it could be replaced by a corresponding assumption with $r\mapsto r( \ln r-1)$ replaced by another Young fnction (see Appendix C below) and adjusting Hypothesis \ref{h2}(ii) accordingly. In particular, we can take e. g. $r\mapsto r^p, \,r\ge 0,\,p>1$. Then the exponential integrability condition on $D^*_xF$ in Hypothesis \ref{h2}(iii) can be replaced by an $L^{p'}$-integrability condition with $p'=\tfrac{p}{p-1}.$  Hence the solution $\rho$ to \eqref{e1.2} would be in $L^p([0,T]\times H,dt\otimes \gamma)$, provided $\rho_0\in L^p(H,\gamma)$. Therefore, we get existence of solutions also in the situation of Section 3, provided $B$ in \eqref{e3.1} is the identity operator (see Corollary \ref{c3.11primo} below). Likewise, e.g. for the Young $r\to r^p,\,r\ge 0,\, p>1,$ one can relax the  assumption on exponential integrability on $\beta_h,\,h\in Y$, in Hypothesis \ref{h1} by $L^p(H,\gamma)$ integrability.
  \end{Remark}
  \begin{Example}
  \label{ex2.10}
  \em
Let us discuss Hypothesis \ref{h2}(ii) for $\gamma$ as in Example  \ref{ex1.3}(ii). In this case we choose $\{e_n:\,n\in\N\}$ to be the eigenbasis of $A$ given by
$$
e_n(\xi):=\sqrt{\frac2\pi}\,\sin(n\pi \xi),\quad\xi\in[0,1],\;n\in\N.
$$
Then for  $A_ne_n=-\lambda_n e_n$ with $\lambda_n:=\pi^2\,n^2$, $n\in \N$. Now consider the corresponding disintegration  \eqref{e2.10n}. Then   $N(0,\tfrac12\,(-A)^{-1})$ is by independence equal to the convolutions of his projections on $H_N$ and $E_N$ respectively. Hence
$$
\Psi_N^2(x,y)=\frac1{(2\pi\lambda_1\cdots\lambda_N)^{N/2}Z}\,\exp\left( -\frac{\alpha}{p}\,\int_0^1|x(\xi)+y(\xi)|^p\, d\xi\, -\frac14\sum_{i=1}^N\lambda_i^{-1}  \langle e_i,x  \rangle^2 \right)  
$$
where $y\in E_N$ and $x(\xi)= \langle   x,e_1\rangle e_1(\xi)+\cdots+ \langle x,e_n  \rangle e_n(\xi)$. 
So, obviously for $\nu_N$--a.e. $y\in E_N$,
$x\mapsto\Psi_N^2(x,y)$ is continuous and strictly positive on $H_N$, since $x+y\in L^p(0,1)=:L^p$, because  $N(0,\tfrac12(-A)^{-1})(C([0,1];\R))=1.$
Thus \eqref{e1.14m} holds.
Unfortunately so far we do not know whether \eqref{e1.14m} holds in case of $\gamma$ as in  \ref{ex1.3}--(iii).   
Now consider again the situation of  \ref{ex1.3}--(ii). We are now going to present a class $F:[0,T]\times H\to H$ for which  Theorem \ref{t2.1n} applies: Let  $f\in C_b([0,T]\times\R;\R)$ such that $ f(t,\cdot)\in C^1(\R;\R)$ for every $t\in[0,T]$ and there exist $K\in(0,\infty)$, $\delta\in (0,p)$ such that for  $f'(t,r)=f_r(t,r)$
$$
f'(t,r)\ge-K(1+|r|^2+\alpha|r|^{p-\delta}),\quad\forall\;(t,r)\in [0,T]\times\R.
$$
Define $F_0: [0,T]\times L^2(0,1)\to L^2(0,1)$ by
$$
F_0(t,x)(\xi):=f(t,x(\xi)),\quad \xi\in (0,1),\;t\in [0,T]]
$$
and  $F: [0,T]\times L^2(0,1)\to L^2(0,1)$ by
\begin{equation}
\label{e*}
F(t,x):=(-A)^{-1}F_0(t,x),\quad x\in L^2(0,1),\;t\in [0,T].
 \end{equation}
Now we want to check Hypothesis \ref{h2} for this type of $F$.

{\it Claim 1}. For every $\epsilon>0$ there exists $C_\epsilon\in(0,\infty)$ such that
$$
\sum_{i=1}^N\partial_{e_i}F^i(t,x)\ge-C_\epsilon-\epsilon(|x|^2_{L^2}+\alpha |x|^p_{L^p]}),\quad x\in L^p(0,1),\;t\in [0,T],\;N\in\N,
$$
where
$$
F^i(t,x):= \langle e_i, F(t,x)   \rangle.
$$
{\it Proof of Claim 1}. Let  $x\in L^p(0,1),\;t\in [0,T]$. Then
$$
\begin{array}{lll}
\ds \sum_{i=1}^N\partial_{e_i}F^i(t,x)&=&\ds
\sum_{i=1}^N\lambda_i^{-1}\partial_{e_i}\int_0^1e_i(\xi)\,f(t,x(\xi))\,d\xi\\
\\
&=&\ds
\sum_{i=1}^N\lambda_i^{-1} \int_0^1e_i^2(\xi)\,f'(t,x(\xi))\,d\xi\\
\\
&\ge&\ds-K
\sum_{i=1}^\infty\lambda_i^{-1} \int_0^1e_i^2(\xi)\,(1+|x(\xi)|^2+\alpha|x(\xi)|^{p-\delta})\,d\xi\\
\\
&\ge&\ds-C_\epsilon-\epsilon(|x(\xi)|_{L^2}^2+\alpha|x(\xi)|_{L^p}^p)
\end{array} 
$$
by YoungÕs inequality.\hfill $\Box$\medskip

{\it Claim 2}. For every $\epsilon>0$ there exists $C_\epsilon\in(0,\infty)$ such that
$$
\sum_{i=1}^N\beta_i(x)F^i(t,x)\ge -C_\epsilon-\epsilon(|x(\xi)|_{L^2}^2+\alpha|x(\xi)|_{L^p}^p),\quad \forall\,x\in L^p(0,1),\,t\in[0,T],\,N\in\N.
$$
{\it Proof of Claim 2}. Let  $x\in L^p(0,1),\;t\in [0,T]$. Then by \eqref{e1.3a}    
$$
\begin{array}{lll}
\ds\sum_{i=1}^N\beta_i(x)F^i(t,x)&\ge&\ds -\sum_{i=1}^N\int_0^1e_i(\xi)\,x(\xi)\,d\xi\,\int_0^1e_i(\xi)\,f(t,x(\xi))\,d\xi\\
\\
&&\ds-\alpha\sum_{i=1}^N\lambda_i^{-1}\int_0^1e_i(\xi)\,|x(\xi)|^{p-2}\,x(\xi)\,d\xi\,\int_0^1e_i(\xi)\,f(t,x(\xi))\,d\xi\\
\\
&\ge&\ds  -\langle P_NF_0(t,x),P_Nx\rangle- \alpha\sum_{i=1}^\infty\lambda_i^{-1}\,|f|_\infty\,\sqrt{\tfrac2\pi}\,|e_i|_{L^p}\,\big||x|^{p-1}\big|_{L^{p/(p-1)}} \\
\\
&\ge&\ds  - |F_0(t,x)|_{L^2}\,|x|_{L^2}- \alpha\sum_{i=1}^\infty\lambda_i^{-1}\,|f|_\infty\,\tfrac2\pi|x|^{p-1}_{L^p}  \\
\\
&\ge&\ds-C_\epsilon-\epsilon(|x|^2_{L^2}+\alpha |x|^p_{L^p}),
\end{array} 
$$
where $P_N$ denotes the orthogonal projection in $L^2(0,1)$ onto $H_N$, i.e. the linear span of $\{e_1,...,e_N\}$.  $\Box$

We note that $C_\epsilon$ can be taken in both claims to be a function only on $\delta, K$ and  $|f|_\infty$ which is increasing in $K$ and $|f|_\infty$, while decreasing in $\delta$.

Now let us prove that by Claim 1 and Claim 2 that Hypothesis \ref{h2} is satisfied. To avoid a further regularization procedure let  us additionally assume that $f(t,\cdot)\in C^2(\R)$ for all $t\in[0,T]$ and $\frac{\partial}{dr}\,f(t,\cdot), \frac{\partial^2}{dr^2}\,f(t,\cdot)\in C([0,T]\times \R)$. Define for $j\in\N,\,x\in H,$ $t\in [0,T]$
\begin{equation}
\label{e**}
F_j(t,x):=P_jF(t,P_jx)=\sum_{i=1}^j\left(\lambda_i^{-1}\int_0^1e_i(\xi)\,f(t,(P_jx)(\xi))\,d\xi   \right)\,e_i,
 \end{equation}
 where $P_j$  is the orthogonal projection onto the linear span of $\{e_1,...,e_j\}$ in $H=L^2(0,1)$. Then obviously $F_j$ is as in Hypothesis \ref{h2}(iii) with $N_j=j$ and
 $$
 \widetilde f_i(t,x_1,...,x_j)=\lambda_i\int_0^1e_i(\xi)f\left(t,\sum_{l=1}^j x_le_l(\xi)\right)\,d\xi,
 $$
for $(x_1,...,x_j)\in\R^j.$ Now let us consider
the corresponding $C_{F_j}(\delta)$ from Hypothesis \ref{h2}(iii) and  $\Psi_N$ defined in
Lemma \ref{l2.4n}. Note that $\Psi_N^2(\cdot,y)$ above is $C^2$ and strictly positive on $H_j=\R^j$ for $\nu_N$-a.e. $y\in E.$ Hence by definition $\Psi_{N,M,l}^2=\Psi_N^2$ for all $M,l\in\N$. Hence for $(x,y)\in H_j\oplus H^\perp_j$, $t\in [0,T]$ by Claim 1 and Claim 2
$$
D^*_{N_j,M,l}F_j(t,(x,y))\le C_\epsilon+\epsilon(|(x,y)|^2_{L^2}+\alpha |(x,y)|^p_{L^p}).
$$
Here we used that $\|P_j\|_{L^p\to L^p}\le c_p\in(0,\infty) $ which is  independent of $j$ (see e.g. \cite[Section 2C16]{LiTr79}). Hence obviously for $\delta\in (0,1)$
$$
\sup_{j\in\N} C_{F_j}(\delta)<\infty.
$$
Hence by  Theorem \ref{t2.1n} we have a solution
$$
\nu_t(dx)=\rho(t,x)\gamma(dx),\quad t\in[0,T],
$$
with $\gamma$ as above, for equation  \eqref{e1.1} for $F$ as above with initial condition $\rho_0\,\gamma$ with $\rho_0$ in $L\log L$ with respect to  $\gamma$.\medskip

Now we shall prove that this solution is also unique provided $\alpha>0$, so 
$\gamma$ is not Gaussian. We shall, however, apply a uniqueness result for the Gaussian reference measure $N(0,\tfrac12(-A)^{-1})$ proved in \cite{DaFlRoe14}, because $\nu_t$ has the density
$$
\bar\rho(t,x)=\rho(t,x)\tfrac1{Z}\,e^{-\frac\alpha{p}|x|^p_{L^p}},\quad (t,x)\in [0,T]\times H
$$
with respect to $N(0,\tfrac12(-A)^{-1})$. Let us first show that $\bar\rho$  is bounded in $(t,x)$. To this end we first note that because $\sum_{i=1}^\infty  \lambda_i^{-1}<\infty$,
$$
R:=\sup_{j\in\N}\,\left\||F_j|_{L^p}\right\|_\infty<\infty.
$$
Hence the corresponding flows $\xi_j$ from
 \eqref{eA1} with $F_j$  replacing  $F$ will all stay in the $L^p$ ball $B^p_{TR}(x)$ for all times in $[0,T]$ when started at
  $x$ in  $L^p(0,1)$. This implies by Claim 1 and 2 that the exponent of the density $\rho^j $ in \eqref{e2.34n} with $F_j$  replacing $F$ will also have an upper bound of type
 $$
 C_\epsilon+\epsilon(|x|_{L^2}^2+\alpha|x|_{L^p}^p),\quad \forall\,x\in L^p(0,1)
 $$
independent of $j$. Hence it follow that
$$
\bar\rho^j(t,x):=\rho^j(t,x)\tfrac1{Z}\,e^{-\frac\alpha{p}|x|^p_{L^p}},\quad (t,x)\in [0,T]\times H
$$
is $N(0,\tfrac12(-A)^{-1})$--essentially bounded, uniformly in $j$, hence  so is its a.e. limit $\bar\rho$.

Now we can apply Theorem 2.3 in \cite{DaFlRoe14} for $p=\infty$ (which by a misprint there, seems to be excluded, but is in fact included in that theorem) to conclude uniqueness if we can prove the following properties (a)--(c) of $F$ defined above. For this we additionally assume: 
\begin{equation}
\label{e***}
\mbox{\it There exists $C,M\in(0,\infty)$ such that}\; |f'(t,r)|\le C(1+|r|^M),\quad r\in \R.
 \end{equation}
 
 (a) $F([0,T]\times H)\subset  (-A)^{-1/2}(H)$.\medskip
 
 (b) There exists $s\in (1,\infty)$ such that
 $$
 \int_0^T\int_H|(-A)^{1/2}F(t,x)|_H^s\,\gamma_0(dx)\,dt<\infty,
 $$

 (c) $F\in L^2(0,T;W^{1,s}(H;H,\gamma_0)$, which is defined as the closure of all vector  fields $F([0,T]\times H)\to H$ of type \eqref{e2.7n} with respect to the nom
 $$
 \|F\|_{1,s,T}:=\left(\int_0^T\int_H(\|DF(t,x)\|^s_{\mathcal L_2(H)} +|F(t,x)|_H^2 ) \, \gamma_0(dx)\,dt\right)^{1/s},
 $$
 where $\|\cdot\|_{\mathcal L_2(H)}$ denotes the Hilbert--Schmidt norm and 
  $\gamma_0=N(0,\tfrac12(-A)^{-1})$.\medskip
  
  By the definition of $F$ in  \eqref{e*} property (a) obviously holds. (b) holds for all $s\in (1,\infty)$ since
  $$
  |(-A)^{1/2}F(t,x)|_H= |(-A)^{-1/2}F_0(t,x)|_H\le const. \|f\|_\infty.
  $$
 So, let us check (c):  Let $F_N$ be  as in 
 \eqref{e**}. Then for $1\le i,j\le N$
 $$
 \partial_{e_j} \langle e_i, F_N(t,x)   \rangle =\frac1{\lambda_i}\int_0^1e_i(\xi)\,e_j(\xi) f'(t,(P_Nx)(\xi))\,d\xi,\quad (t,x)\in [0,T]\times H.
 $$
Hence by  \eqref{e***} for some constant $c_1\in (0,\infty)$
$$
\begin{array}{l}
\ds\|DF_N(t,x)\|_{\mathcal L_2(H)}^2=\sum_{i=1}^N\frac1{\lambda_i}\int_0^1e^2_i(\xi)\,|f'(t,(P_Nx)(\xi))|^2\,d\xi\\
\\
\ds\le C_1\sum_{i=1}^\infty\frac1{\lambda_i}\,\sup_{N\in\N}\,\|P_N\|^{2M}_{L^{2M}\to L^{2M}}\,\left( 1+|x|^{2M}_{L^{2M}}  \right).  
\end{array} 
$$
Hence $F_N(t,x)),\,N\in \N,$ is bounded in the norm $\|\cdot\|_{1,2,T}$. Since $\sup_{n\in\N}\|F_N\|_\infty<\infty$ and $F_N\to F$ $dt\otimes \gamma_0$--a.e., (c) follows for $s=2$, because the operator $D$ is closable.

\end{Example}

  \section{Uniqueness}
 In Example \ref{ex2.10} of previous section we proved uniqueness for \eqref{e1.2} using the uniqueness result from \cite{DaFlRoe14} for Gaussian reference measures $\gamma$.
 For non--Gaussian, reference measures $\gamma$ uniqueness for \eqref{e1.2} is much more difficult to prove. In this section we do that for a whole class of  non Gaussian, reference measures $\gamma$. 
 
 \subsection{Notations and preliminaries}
 
 In this section, we   take as    reference  measure $\gamma$ the invariant measure   of the following reaction--diffusion   equation in $H:=L^2(0,1)$, 
\begin{equation}
\label{e3.1}
\left\{\begin{array}{lll}
dX(t)=[AX(t)+p(X(t))]dt+BdW(t),\\
\\
X(0)=x,\quad x\in H,
\end{array}\right.
\end{equation}
where  $A$ is  the realisation of the Laplace operator  $D^2_\xi $ equipped with
Dirichlet boundary conditions,
$$
Ax=D^2_\xi x,\quad x\in D(A),\quad D(A)=H^2(0,1)\cap H^1_0(0,1),
$$
  $p$ is a decreasing polynomial   
of odd degree equal to $N>1$,  $B\in L(H)$ with a {\em bounded inverse} and  $W$  is an $H$--valued cylindrical Wiener process  on a filtered
probability space
$(\Omega,\mathcal F, (\mathcal F_t)_{t>0},\P)$. 
Let us  recall the definition of   solution of \eqref{e3.1}.
\begin{Definition}
\label{d4.5}
(i). Let $x\in L^{2N}(0,1);$ we say that $X\in C_W([0,T];H)$
$\footnote{By $C_W([0,T];H)$ we  mean the set of  $H$--valued stochastic processes continuous in mean square and adapted to the filtration $(\mathcal F_t)$ .}$ is a
 {\em mild} solution
of problem \eqref{e3.1} if $X(t)\in L^{2N}(0,1)$ for all $t\ge 0$ and  fulfills the following integral equation
\begin{equation}
\label{e4.8}
X(t)=e^{tA}x+\int_0^te^{(t-s)A}p(X(s))ds+\int_0^te^{(t-s)A}dW(s),\quad t\ge 0.
\end{equation}

\noindent (ii). Let $x\in H;$ we say that $X\in C_W([0,T];H)$ is a {\em generalized} solution
of problem \eqref{e3.1} if there exists a sequence $(x_n)\subset L^{2N}(0,1),$ such that
$$
\lim_{n\to \infty}x_n=x\quad\mbox{\rm in}\;L^{2}(0,1),
$$
and
$$
\lim_{n\to \infty}X(\cdot, x_n)=X(\cdot,x)\quad\mbox{\rm in}\;C_W([0,T];H).
$$

\end{Definition}

It is convenient to introduce  the following approximating problem
\begin{equation}
\label{e4.9}
\left\{\begin{array}{l}
dX_\alpha (t)=(AX_\alpha (t)+p_\alpha (X_\alpha (t))dt+B\,dW(t),\\
\\
X_\alpha (0)=x\in H,
\end{array}\right.
\end{equation}
where for any $\alpha \in(0,1],$   $p_\alpha$  are the Yosida approximations of $p$, that is
$$
p_{\alpha }(r)= \frac{1}{\alpha }
\;(r-J_{\alpha}(r)),\;J_{\alpha }(r)=(1-\alpha p(\cdot))^{-1}(r),\quad r\in \R.
$$
Notice that, since $p_{\alpha }$ is Lipschitz continuous, then 
 for any $\alpha >0,$ and any $x\in H,$
  problem $(\ref{e4.9})$ has a unique 
 solution $X_\alpha (\cdot,x)\in C_W([0,T];H)$.

The following result is proved in \cite[Theorem 4.8]{Da04}
\begin{Proposition}
\label{p4.8} 
Let $T>0$, then 
\begin{enumerate}

\item[(i)] If $x\in L^{2N}(0,1),$  problem
  $(\ref{e3.1})$ has a unique  mild solution $X(\cdot,x)$.    
  
  \item[(ii)] If $x\in L^{2}(0,1),$  problem
  $(\ref{e3.1})$ has a unique  generalized solution $X(\cdot,x).$\medskip

 \noindent In both cases $\ds{\lim_{\alpha \to 0}X_\alpha (\cdot,x)=X(\cdot,x)}$ in 
  $C_W([0,T];H).$

\end{enumerate}

\end{Proposition}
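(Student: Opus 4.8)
The plan is to take as starting point the unique solvability of the regularised problem \eqref{e4.9}, which holds for each fixed $\alpha\in(0,1]$ because $p_\alpha$ is globally Lipschitz, and then to pass to the limit $\alpha\to 0$. The whole argument is driven by the dissipativity built into the equation: since $p$ is decreasing we have $(p(r)-p(s))(r-s)\le 0$ for all $r,s\in\R$, and this is inherited by the Yosida approximations, $(p_\alpha(r)-p_\alpha(s))(r-s)\le 0$; together with the fact that $A=D^2_\xi$ generates a contraction (indeed exponentially stable) semigroup on $H$, this makes the drift dissipative and yields bounds that are stable as $\alpha\to 0$. It is convenient to subtract the noise first: writing $W_A(t):=\int_0^t e^{(t-s)A}B\,dW(s)$ for the stochastic convolution, which does not depend on $\alpha$, the process $Y_\alpha:=X_\alpha-W_A$ solves, pathwise in $\omega$, the random evolution equation $Y_\alpha'=AY_\alpha+p_\alpha(Y_\alpha+W_A)$ with $Y_\alpha(0)=x$, so that all subsequent estimates are deterministic in each realisation.

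First I would record the relevant properties of the Yosida approximation $p_\alpha$, namely that it is Lipschitz and dissipative, that $p_\alpha(r)=p(J_\alpha(r))$ with $|r-J_\alpha(r)|=\alpha\,|p_\alpha(r)|$, and that $J_\alpha(r)\to r$, $p_\alpha(r)\to p(r)$ as $\alpha\to0$. Using that in one space dimension $(-A)^{-1}$ is of trace class, the stochastic convolution $W_A$ has $\P$-a.s. paths in $C([0,1];\R)$, so that $p(Y_\alpha+W_A)$ is pathwise well defined. Testing the equation for $Y_\alpha$ against $Y_\alpha$ and, for the superlinear part, against $|Y_\alpha|^{2N-2}Y_\alpha$, and exploiting the dissipativity of $p_\alpha$ together with the sign and growth of $p$, I would derive bounds on $Y_\alpha$ in $C([0,T];H)$ and, when $x\in L^{2N}(0,1)$, in $L^\infty(0,T;L^{2N}(0,1))$, all uniform in $\alpha$.

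Next I would prove that $(X_\alpha)_\alpha$ is Cauchy in $C_W([0,T];H)$. Since $W_A$ cancels in the difference, $X_\alpha-X_\beta=Y_\alpha-Y_\beta$ solves a purely deterministic (pathwise) equation; testing it against $Y_\alpha-Y_\beta$ and using the monotonicity of $p_\alpha,p_\beta$ together with the resolvent estimate $|r-J_\alpha(r)|=\alpha\,|p_\alpha(r)|$, the nonlinear contribution is controlled by terms of order $\alpha+\beta$ multiplied by the uniform bounds of the previous step. Taking expectations and applying Gronwall's lemma gives $\sup_{t\in[0,T]}\E\,|X_\alpha(t)-X_\beta(t)|_H^2\to0$ as $\alpha,\beta\to0$, hence $X_\alpha$ converges in $C_W([0,T];H)$ to some $X$. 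Passing to the limit in the mild formulation \eqref{e4.8} --- the linear and stochastic terms converge at once, and $p_\alpha(X_\alpha)\to p(X)$ in $L^1([0,T]\times(0,1))$ along a subsequence thanks to the $L^{2N}$ bound and $J_\alpha\to\mathrm{id}$ --- identifies $X$ as a mild solution for $x\in L^{2N}(0,1)$, which is also the convergence asserted in the statement. Uniqueness in case (i) follows from the same computation applied to two mild solutions with identical initial datum and noise: their difference $D$ satisfies $\tfrac{d}{dt}|D|_H^2\le 2\langle AD,D\rangle+2\langle p(X)-p(\widetilde X),D\rangle\le0$, so $|D(t)|_H\le|D(0)|_H$.

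Finally, for case (ii) with $x\in L^2(0,1)$ I would approximate $x$ by $x_n\in L^{2N}(0,1)$ with $x_n\to x$ in $L^2$. The non-expansivity estimate just derived, which uses only the dissipativity of $A$ and $p$ and not the regularity of the initial datum, gives $\sup_{t\in[0,T]}\E\,|X(t,x_n)-X(t,x_m)|_H^2\le|x_n-x_m|_H^2$; hence $(X(\cdot,x_n))_n$ is Cauchy in $C_W([0,T];H)$, and its limit, which is independent of the approximating sequence, is by definition the generalized solution, proving (ii). I expect the main obstacle to be the uniform-in-$\alpha$ a priori estimate for the superlinear polynomial $p$: because $\deg p=N>1$, the $L^2$ energy balance does not close by itself, and one must work in $L^{2N}(0,1)$ and combine the monotonicity of $p$ with the pathwise spatial continuity of $W_A$ in order to absorb the nonlinear terms uniformly in $\alpha$ and thereby make sense of the mild formulation in case (i).
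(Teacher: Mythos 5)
Your argument is correct and is essentially the proof the paper relies on: the paper does not prove Proposition \ref{p4.8} itself but cites \cite[Theorem 4.8]{Da04}, where the result is established exactly by your route --- subtracting the stochastic convolution, exploiting the dissipativity of $A$ and of the Yosida approximations $p_\alpha$ to get $\alpha$-uniform $L^{2N}$ bounds, proving the Cauchy property via monotonicity and the identity $|r-J_\alpha(r)|=\alpha|p_\alpha(r)|$, and extending to $x\in L^2(0,1)$ by the non-expansivity estimate.
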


Let us introduce now the transition semigroups $P_t$ and $P_t^\alpha$, setting
\begin{equation}
\label{e1}
P_t\varphi(x)=\E[\varphi(X(t,x))],\quad \varphi\in B_b(H)
\end{equation} 
and  
$$
P_t^\alpha\varphi(x)=\E[\varphi(X_\alpha(t,x))],\quad \varphi\in B_b(H).
$$

This definition extends to vector fields:\ if $G:H\rightarrow H$ is measurable
bounded, we call $\left(  \mathbf{P}_{t}G\right)  \left(  x\right)  $
the element of $H$ such that
\[
\left\langle \left(  \mathbf{P}_{t}G\right)  \left(  x\right)
,h\right\rangle _{H}=\mathbb{E}\left[  \left\langle G\left(  X\left(
t,x\right)  \right)  ,h\right\rangle _{H}\right]
\]
for every $h\in H$. It exists since
\[
\left\vert \mathbb{E}\left[  \left\langle G\left(  X\left(  \epsilon,x\right)
\right)  ,h\right\rangle _{H}\right]  \right\vert \leq\mathbb{E}\left[
|G(t,x)|
_{H}\right]  \left |h\right|_{H}\leq C_{G}\,|h|
_{H}
\]
where $C_{G}$ bounds $G$. In the sequel we shall use the notation
\[
\left(  \frac{I-\mathbf{P}_{t}}{t}\right)  G\left(  t,x\right)
\]
for $\frac{G\left(  t,x\right)  -\left(  \mathbf{P}_{t}G\left(
t,\cdot\right)  \right)  \left(  x\right)  }{t}$ and for analogous
expressions. We shall use similar notations for the semigroups associate to
the Yosida regularizations, $P_{t}^{\alpha}$ and $\mathbf{P}_t^{\alpha}$.

Denote by $L_{2}\left(  H\right)  $ (resp. $\mathcal{L}\left(  H\right)
$)\ the Hilbert-Schmidt norm (resp. operator norm) of operators in $H$.

 The sequence $(e_j)$
\begin{equation}
\label{e3.2d}
e_j(\xi)=\sqrt{\tfrac2\pi}\;\sin(j\pi\xi),\quad \xi\in[0,1],\; j\in\N,
 \end{equation}
  is an orthonormal basis  in $H$ and it results
\begin{equation}
\label{e3.3d}
Ae_j=-\alpha_j e_j,\quad\forall\;j\in\N,
 \end{equation}
 where
 $$
 \alpha_j:=\pi^2\,j^2,\quad\forall\;j\in\N.
 $$
 \begin{Lemma}
 \label{l3.3j}
 For every $\theta_{0}>1/4$ we have $(-A)^{-\theta_{0}}\in L_2(H)$.
 \end{Lemma}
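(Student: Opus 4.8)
The plan is to compute the Hilbert--Schmidt norm of $(-A)^{-\theta_0}$ directly in the eigenbasis $(e_j)$ from \eqref{e3.2d}. Since $Ae_j=-\alpha_je_j$ with $\alpha_j=\pi^2 j^2>0$ by \eqref{e3.3d}, the operator $-A$ is positive selfadjoint with a discrete spectrum, so by the spectral calculus $(-A)^{-\theta_0}$ is the bounded selfadjoint operator determined by
$$
(-A)^{-\theta_0}e_j=\alpha_j^{-\theta_0}e_j=\pi^{-2\theta_0}\,j^{-2\theta_0}\,e_j,\quad j\in\N.
$$
The first step is simply to record this, noting that $(e_j)$ is an orthonormal basis of $H$ consisting of eigenvectors, which is exactly what makes the Hilbert--Schmidt computation transparent.

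Next I would invoke the standard characterization of the Hilbert--Schmidt norm via any orthonormal basis: since $(-A)^{-\theta_0}$ is diagonal in $(e_j)$, we have
$$
\|(-A)^{-\theta_0}\|_{L_2(H)}^2=\sum_{j=1}^\infty \big|(-A)^{-\theta_0}e_j\big|_H^2=\sum_{j=1}^\infty \alpha_j^{-2\theta_0}=\pi^{-4\theta_0}\sum_{j=1}^\infty j^{-4\theta_0}.
$$
The membership $(-A)^{-\theta_0}\in L_2(H)$ is then equivalent to convergence of the series $\sum_{j\ge1}j^{-4\theta_0}$.

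Finally I would conclude by the $p$-series test: $\sum_{j\ge1}j^{-4\theta_0}<\infty$ if and only if $4\theta_0>1$, i.e. $\theta_0>1/4$. Under the hypothesis $\theta_0>1/4$ this holds, hence the Hilbert--Schmidt norm is finite and $(-A)^{-\theta_0}\in L_2(H)$. There is no genuine obstacle in this argument: the only point requiring any care is the justification that the Hilbert--Schmidt norm may be evaluated in the eigenbasis (which is immediate once the operator is diagonal), and the rest reduces to the elementary convergence criterion for $\sum j^{-s}$. The sharpness of the threshold $\theta_0>1/4$ mirrors exactly the exponent $4\theta_0$ appearing after squaring the eigenvalues $\alpha_j^{-\theta_0}\sim j^{-2\theta_0}$.
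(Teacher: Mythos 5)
Your proof is correct and is essentially identical to the paper's: both compute $\|(-A)^{-\theta_0}\|_{L_2(H)}^2=\sum_j|(-A)^{-\theta_0}e_j|_H^2$ in the eigenbasis $(e_j)$ and conclude by convergence of $\sum_j j^{-4\theta_0}$ for $\theta_0>1/4$. Your version merely spells out the constant $\pi^{-4\theta_0}$ and the $p$-series test, which the paper leaves implicit.
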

 \begin{proof}
 We have in fact
$$ |( -A) ^{-\theta_{0}}|_{L_{2}\left(
H\right)  }^{2}=\sum_{j\in\mathbb{N}}|(-A)^{-\theta_{0}}e_{j}| _{H}^{2}=\sum_{j\in\mathbb{N}}\left\vert
j\right\vert ^{-4\theta_{0}}<\infty.
$$
\qed
\end{proof}
In the sequel we denote by $\theta_{0}$ any number in $(\tfrac14,\tfrac12)$. We need $\theta_{0}<\tfrac12$ for the results on stochastic convolution.
 
\begin{Remark}
\em When $B$ is equal to the identity,   \eqref{e3.1} is a gradient system and the corresponding transition semigroup $P_t$ is symmetric whereas if $B\neq I$,   $P_t$ is not symmetric.

\end{Remark}

 For  $P^\alpha_t$
 the following  Bismut-Elworthy-Li formula holds, see \cite{ElLi94} and \cite{DaZa14}.
 \begin{equation}
\label{e4.10}
 \langle D_xP^\alpha _t\varphi(x),h  \rangle=\frac1t\;\E\left[\varphi(X_\alpha (t,x))\int_0^t\langle 
B^{-1}\eta_\alpha^h(s,x),dW(s) \rangle  \right],\quad h\in H, 
\end{equation}
 where for any $h\in H$,  $\eta_\alpha^h(t,x)=:D_xX_\alpha(t,x)\cdot h$  is the differential of $X_\alpha(t,x)$ with respect to $x$ in the direction $h$. $\eta_\alpha ^{h}(t,x)$ is the  solution of the following equation with random coefficients
 \begin{equation}
\label{e5b}
 D_t\eta_\alpha ^{h}(t,x)=A\eta_\alpha ^{h}(t,x)+D_xp_\alpha(X_\alpha(t,x))
 \eta_\alpha^{h}(t,x),\quad \eta_\alpha^{h}(0,x)=h.
\end{equation}
The proof of the following lemma is a straightforward consequence of the dissipativity  of $p(\cdot)$.
\begin{Lemma}
\label{l3.1a}
It results
\begin{equation}
\label{e3.10c}
|\eta^h_{\alpha}(t,x)|_H\le  |h|_H,\quad\forall\;t\ge 0,\,x,h\in H,\,\alpha\in(0,1].
\end{equation}

\end{Lemma}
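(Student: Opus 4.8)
The plan is to establish \eqref{e3.10c} by a pathwise energy estimate on the linearized equation \eqref{e5b}, exploiting that both the principal part $A$ and the zeroth-order multiplicative perturbation are dissipative. I would fix $\alpha\in(0,1]$, $x,h\in H$ and argue $\P$--almost surely, regarding \eqref{e5b} as a (random) linear parabolic equation for $t\mapsto\eta_\alpha^h(t,x)\in H=L^2(0,1)$.

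First I would record the two sign facts that drive the estimate. Since $A$ is the Dirichlet Laplacian, it is self-adjoint and negative, so $\langle Av,v\rangle_H=-|(-A)^{1/2}v|_H^2\le 0$ for $v\in D((-A)^{1/2})$. Next, since $p$ is decreasing we have $p'\le 0$, and the Yosida approximation preserves dissipativity: differentiating $r=(I-\alpha p)(J_\alpha(r))$ gives $J_\alpha'(r)=(1-\alpha p'(J_\alpha(r)))^{-1}\in(0,1]$, whence $p_\alpha$ is again decreasing, i.e.\ $p_\alpha'\le 0$. Because $p_\alpha$ acts on $H$ as the Nemytskii (superposition) operator, its Fr\'echet derivative $D_xp_\alpha(X_\alpha(t,x))$ is multiplication by the nonpositive function $\xi\mapsto p_\alpha'(X_\alpha(t,x)(\xi))$, so that
\[
\langle D_xp_\alpha(X_\alpha(t,x))\,\eta_\alpha^h(t,x),\eta_\alpha^h(t,x)\rangle_H=\int_0^1 p_\alpha'(X_\alpha(t,x)(\xi))\,(\eta_\alpha^h(t,x)(\xi))^2\,d\xi\le 0.
\]

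With these two facts in hand I would take the $H$--inner product of \eqref{e5b} with $\eta_\alpha^h(t,x)$ to obtain
\[
\tfrac12\,\frac{d}{dt}\,|\eta_\alpha^h(t,x)|_H^2=\langle A\eta_\alpha^h(t,x),\eta_\alpha^h(t,x)\rangle_H+\langle D_xp_\alpha(X_\alpha(t,x))\eta_\alpha^h(t,x),\eta_\alpha^h(t,x)\rangle_H\le 0,
\]
and then integrate in $t$ from $0$, using $\eta_\alpha^h(0,x)=h$, to conclude $|\eta_\alpha^h(t,x)|_H^2\le|h|_H^2$, which is \eqref{e3.10c}.

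The one point requiring care, and the main obstacle, is the rigorous justification of this identity: a priori $\eta_\alpha^h(t,x)$ is only a mild solution of \eqref{e5b} and need not lie in $D(A)$, so that $\langle A\eta_\alpha^h,\eta_\alpha^h\rangle_H$ and the time derivative above are merely formal. I would remove this obstacle via the spectral Galerkin scheme: project \eqref{e5b} with the orthogonal projection $\Pi_n$ onto $\mathrm{span}\{e_1,\dots,e_n\}$, obtaining a finite system of ODEs with coefficients continuous in $t$ (recall $X_\alpha$ has continuous paths), whose solution $\eta^{(n)}$ is $C^1$ in $t$. Since $A$ is diagonal in the basis $(e_j)$ and commutes with $\Pi_n$, one still has $\langle A\eta^{(n)},\eta^{(n)}\rangle_H\le 0$, while $\langle \Pi_n(p_\alpha'(X_\alpha)\eta^{(n)}),\eta^{(n)}\rangle_H=\int_0^1 p_\alpha'(X_\alpha)(\eta^{(n)})^2\,d\xi\le 0$; hence the discrete energy estimate gives $|\eta^{(n)}(t)|_H\le|\Pi_n h|_H\le|h|_H$, and passing to the limit $n\to\infty$ (the convergence $\eta^{(n)}\to\eta_\alpha^h$ being standard for this linear equation with bounded coefficient $D_xp_\alpha$) yields \eqref{e3.10c}. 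Alternatively, one may invoke analyticity of $e^{tA}$ to upgrade the mild solution to a strong one for $t>0$ and carry out the identity directly.
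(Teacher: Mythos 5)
Your proof is correct and follows exactly the route the paper intends: the paper gives no written argument beyond the remark that the bound is "a straightforward consequence of the dissipativity of $p(\cdot)$", and your energy estimate combining $\langle A v,v\rangle_H\le 0$ with $p_\alpha'\le 0$ (preserved under the Yosida regularization) is precisely that dissipativity argument, supplemented by a standard Galerkin justification of the formal identity. Nothing further is needed.
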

   \begin{Proposition}
\label{p4.16} 
Semigroups $P_t$ and $P^\alpha_t$  have  unique invariant
measures $\gamma, \gamma^\alpha$ respectively. Moreover 
$ \gamma^\alpha$ is weakly convergent to  $\gamma$ and  for any $N\in\N$ there exists $c_N>0$
such that
 \begin{equation}
\label{e4.11}
\int_H|x|_{L^{2N}(0,1)}^{2N}\gamma^\alpha(dx)\le c_N,\quad \int_H|x|_{L^{2N}(0,1)}^{2N}\gamma (dx)\le c_N.
\end{equation}
\end{Proposition}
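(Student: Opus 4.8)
The plan is to derive all four assertions—existence, uniqueness, the uniform bound \eqref{e4.11}, and weak convergence—from the dissipative structure of \eqref{e3.1} together with the convergence $X_\alpha(\cdot,x)\to X(\cdot,x)$ supplied by Proposition \ref{p4.8}. For existence and uniqueness I would argue as follows. In the regularized equation \eqref{e4.9} the drift $p_\alpha$ is Lipschitz and dissipative (as a Yosida approximation of the decreasing $p$), while $A$ has spectral gap $\alpha_1=\pi^2>0$. Coupling two solutions $X_\alpha(\cdot,x)$ and $X_\alpha(\cdot,y)$ driven by the same $W$, their difference $Z:=X_\alpha(\cdot,x)-X_\alpha(\cdot,y)$ solves an equation with no noise, and from $\langle AZ,Z\rangle\le-\pi^2|Z|_H^2$ together with $\langle p_\alpha(a)-p_\alpha(b),a-b\rangle\le0$ one obtains $|Z(t)|_H^2\le e^{-2\pi^2 t}|Z(0)|_H^2$. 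This pathwise exponential contraction yields uniqueness of an invariant measure $\gamma^\alpha$, while its existence follows from Krylov--Bogoliubov once the moment bound below provides tightness. The very same contraction, transported to $P_t$ through the generalized solutions of Proposition \ref{p4.8}(ii), gives existence and uniqueness of $\gamma$.

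The heart of the proof is the uniform bound \eqref{e4.11}. I would apply It\^o's formula to $t\mapsto\frac1{2N}|X_\alpha(t,x)|_{L^{2N}(0,1)}^{2N}$. After integration by parts (using the Dirichlet boundary conditions) the Laplacian contribution is $-(2N-1)\int_0^1|X_\alpha|^{2N-2}|\partial_\xi X_\alpha|^2\,d\xi\le0$; the reaction term, by the coercivity $r\,p_\alpha(r)\le-a|r|^{N+1}+b$ of the Yosida approximations (valid with $a,b$ independent of $\alpha$, since $p$ is decreasing of odd degree $N$), produces a genuinely negative feedback of order $\int_0^1|X_\alpha|^{3N-1}\,d\xi$; and the It\^o trace correction is controlled using $B\in L(H)$ and the Hilbert--Schmidt regularity of Lemma \ref{l3.3j}. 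Taking expectations yields $\frac{d}{dt}\E|X_\alpha(t,x)|_{L^{2N}}^{2N}\le -c\,\E|X_\alpha(t,x)|_{L^{2N}}^{2N}+C$ with $c,C$ independent of $\alpha$ and $t$, hence $\sup_{\alpha,\,t}\E|X_\alpha(t,x)|_{L^{2N}}^{2N}\le c_N$. Integrating this estimate against $\gamma^\alpha$ and using its invariance gives the first inequality in \eqref{e4.11}.

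For the weak convergence and the bound on $\gamma$ I would proceed by compactness. The uniform moment bound, combined with the fractional Sobolev regularity of the stochastic convolution (again via Lemma \ref{l3.3j}, which places the $\gamma^\alpha$ uniformly in a space $D((-A)^s)$, $s>0$, compactly embedded in $H$), makes $\{\gamma^\alpha\}$ tight, so I extract a weakly convergent subsequence $\gamma^{\alpha_k}\to\mu$. Passing to the limit in the invariance identity $\int_H P_t^{\alpha_k}\varphi\,d\gamma^{\alpha_k}=\int_H\varphi\,d\gamma^{\alpha_k}$, $\varphi\in C_b(H)$, is legitimate because $P_t^{\alpha_k}\varphi\to P_t\varphi$ by Proposition \ref{p4.8}; hence $\mu$ is $P_t$--invariant, so $\mu=\gamma$ by uniqueness, and since every subsequence has a further subsequence converging to $\gamma$, the whole family converges weakly to $\gamma$. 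The second inequality in \eqref{e4.11} then follows from the first by the Portmanteau theorem, since $x\mapsto|x|_{L^{2N}}^{2N}$ is nonnegative and lower semicontinuous on $H$.

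I expect the main obstacle to be the uniform-in-$\alpha$ moment estimate of the second paragraph—specifically securing the coercive bound on $r\,p_\alpha(r)$ and the control of the trace term independently of the Yosida parameter—together with upgrading the $L^{2N}$ bound to the compactness needed for tightness in $H$.
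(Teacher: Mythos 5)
The paper does not actually prove this proposition; it simply cites \cite[Proposition 4.20]{Da04} and \cite[Proposition 15]{DaDe14}, so your proposal is being measured against the standard arguments in those references rather than against an in-text proof. Your overall architecture is the right one and matches those sources: pathwise exponential contraction from dissipativity of $A+p_\alpha$ for uniqueness, Krylov--Bogoliubov plus a uniform moment bound for existence, a uniform-in-$\alpha$ estimate on $\E|X_\alpha(t,x)|_{L^{2N}}^{2N}$ transferred to $\gamma^\alpha$ by invariance, tightness and identification of the limit for weak convergence, and lower semicontinuity of $x\mapsto|x|_{L^{2N}}^{2N}$ for the bound on $\gamma$ itself.

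There is, however, one step that fails as written: the direct application of It\^o's formula to $t\mapsto|X_\alpha(t,x)|_{L^{2N}(0,1)}^{2N}$. Here $W$ is a cylindrical Wiener process and $B$ is merely bounded with bounded inverse, so $BB^*$ is not trace class; the It\^o correction for $f(x)=\int_0^1|x(\xi)|^{2N}d\xi$ is of the form $N(2N-1)\sum_k\int_0^1|X_\alpha(\xi)|^{2N-2}(Be_k)^2(\xi)\,d\xi$, and for $B=I$ one already has $\sum_k e_k(\xi)^2=\infty$, so the trace term is not finite. Lemma \ref{l3.3j} controls the regularity of the stochastic convolution, not this trace. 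The standard remedy (and the one used in \cite[Proposition 4.20]{Da04}) is to subtract the stochastic convolution: set $Y_\alpha:=X_\alpha-W_A$ with $W_A(t)=\int_0^te^{(t-s)A}B\,dW(s)$, note that $Y_\alpha$ solves a pathwise deterministic equation, run the $L^{2N}$ energy estimate on $Y_\alpha$ (where the Dirichlet and coercivity terms you describe do appear, with constants independent of $\alpha$ because $p_\alpha$ inherits the one-sided polynomial bounds of $p$), and control $W_A$ separately using its known $L^p$ and H\"older estimates. With that substitution your second paragraph goes through and the rest of the argument (tightness, passage to the limit in the invariance identity using the locally uniform convergence $P_t^\alpha\varphi\to P_t\varphi$, and the Portmanteau step) is sound, modulo the usual truncation needed to justify integrating the moment estimate against $\gamma^\alpha$ when one does not yet know that $\int|x|_{L^{2N}}^{2N}\,\gamma^\alpha(dx)$ is finite.
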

(see \cite[Proposition 4.20]{Da04} and \cite[Proposition 15]{DaDe14}).
\begin{Corollary}
\label{c3.6}
Let  $h(x)\in D(A)$--$\nu$--a.e.  $x\in H,$ and
$Ah\in L^4(H),\gamma)$. Then there exists $K>0$ such that
 \begin{equation}
\label{e3.11a}
 \int_H|Dp_\alpha(x)h(x)|^2\,\gamma(dx)\le K\|Ah\|^2_{L^4(H,\gamma)} ,\quad\forall\;\alpha\in(0,1].
\end{equation}
 \end{Corollary}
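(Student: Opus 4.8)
The plan is to first identify the object $Dp_\alpha(x)$. Here $p_\alpha:H\to H$, $H=L^2(0,1)$, is the Nemytskii operator induced by the scalar Yosida approximation, so its Fr\'echet derivative at $x\in H$ is the multiplication operator $v\mapsto p_\alpha'(x(\cdot))\,v(\cdot)$. Consequently, for $\gamma$-a.e.\ $x$,
\[
|Dp_\alpha(x)h(x)|_H^2=\int_0^1 p_\alpha'(x(\xi))^2\,h(x)(\xi)^2\,d\xi\le \|h(x)\|_{L^\infty(0,1)}^2\int_0^1 p_\alpha'(x(\xi))^2\,d\xi .
\]
Thus the proof reduces to three ingredients: (a) a pointwise bound on $p_\alpha'$ that is \emph{uniform in} $\alpha\in(0,1]$; (b) an $L^\infty$ estimate for $h(x)$ controlled by $|Ah(x)|_H$; and (c) an integration against $\gamma$ using the moment bounds \eqref{e4.11}.

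For (a) I would exploit dissipativity of $p$: since $-p$ is maximal monotone, $J_\alpha=(I-\alpha p)^{-1}$ is its nonexpansive resolvent and $p_\alpha=p\circ J_\alpha$, whence $p_\alpha'=p'(J_\alpha)\,J_\alpha'$ with $0<J_\alpha'\le 1$, so $|p_\alpha'(r)|\le|p'(J_\alpha(r))|$. Nonexpansiveness gives $|J_\alpha(r)|\le |J_\alpha(0)|+|r|$, and $\sup_{\alpha\in(0,1]}|J_\alpha(0)|<\infty$ because the fixed point $s=\alpha p(s)$ stays bounded (the leading coefficient of the odd-degree $p$ is negative). As $p'$ is a polynomial of degree $N-1$, this yields a constant $C$ \emph{independent of} $\alpha$ with $|p_\alpha'(r)|\le C(1+|r|^{N-1})$ for all $r\in\R$. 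This replaces the naive bound $|p_\alpha'|\le 1/\alpha$, which would degenerate as $\alpha\to0$.

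For (b), $h(x)\in D(A)=H^2(0,1)\cap H^1_0(0,1)$ and $A:D(A)\to H$ is an isomorphism with $\|A^{-1}\|\le\pi^{-2}$. Using $\|h(x)'\|_{L^2}^2=\langle -Ah(x),h(x)\rangle\le |Ah(x)|_H\,|h(x)|_H$ together with $|h(x)|_H\le\pi^{-2}|Ah(x)|_H$, and the one-dimensional embedding $H^1(0,1)\hookrightarrow L^\infty(0,1)$, I obtain $\|h(x)\|_{L^\infty}\le C\,|Ah(x)|_H$. Combining (a) and (b) gives, uniformly in $\alpha$, $|Dp_\alpha(x)h(x)|_H^2\le C\,|Ah(x)|_H^2\,g(x)$ with $g(x):=\int_0^1(1+|x(\xi)|^{N-1})^2\,d\xi$. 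For (c) I would apply the Cauchy--Schwarz inequality with respect to $\gamma$:
\[
\int_H|Dp_\alpha(x)h(x)|_H^2\,\gamma(dx)\le C\Big(\int_H|Ah(x)|_H^4\,\gamma(dx)\Big)^{1/2}\Big(\int_H g(x)^2\,\gamma(dx)\Big)^{1/2},
\]
the first factor being exactly $\|Ah\|_{L^4(H,\gamma)}^2$. Since Jensen's inequality on $(0,1)$ gives $g(x)^2\le C\bigl(1+|x|_{L^{4(N-1)}(0,1)}^{4(N-1)}\bigr)$, the second factor is finite by \eqref{e4.11} of Proposition \ref{p4.16} (applied with exponent $2(N-1)$ in place of $N$) and, crucially, is a constant not depending on $\alpha$. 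This produces the asserted $K$.

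I expect step (a) to be the main obstacle: everything hinges on replacing the $\alpha$-dependent Lipschitz bound $1/\alpha$ by the $\alpha$-uniform polynomial estimate $|p_\alpha'(r)|\le C(1+|r|^{N-1})$, which is what forces use of the dissipative structure of $p$ rather than mere Lipschitz continuity of $p_\alpha$. Once this bound is secured, steps (b) and (c) are routine given the one-dimensional Sobolev embedding, the isomorphism property of $A$, and the polynomial moment bounds \eqref{e4.11}.
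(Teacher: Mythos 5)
Your proof is correct and follows essentially the same route as the paper: a pointwise polynomial bound on the derivative of the nonlinearity, the embedding $D(A)\hookrightarrow L^\infty(0,1)$ to control $h(x)$ by $|Ah(x)|_H$, the Cauchy--Schwarz inequality with respect to $\gamma$, and the moment bounds \eqref{e4.11}. The only difference is that you make explicit the $\alpha$-uniform estimate $|p_\alpha'(r)|\le C(1+|r|^{N-1})$ via the resolvent identity $p_\alpha'=p'(J_\alpha)J_\alpha'$, a point the paper's proof passes over silently by writing $p'$ in place of $p_\alpha'$; this is a welcome clarification rather than a deviation.
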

  \begin{proof}
 Let $h(x)\in D(A)$. Then there is $K_1>0$ such that
    $$
  |p'(x)h(x)|^2\le K_1 |x^{N-1}|^2\,|h(x)|^2_{D(A)}
  \le K_1 |x|^{2N-2}_{L^{2N-2}}\,|h(x)|^2_{D(A)}.
  $$
  Integrating with respect to $\gamma$ over $H$ and using H\"older's inequality, yields
  $$
  \begin{array}{lll}
  \ds\int_H|p'(x)h(x)|^2\,\gamma(dx)&\le&\ds K_1\int_H|x|^{2N-2}_{L^{2N-2}}\,|Ah(x)|^2\,\gamma(dx)\\
  \\
  &\le&\ds 
K_1\int_H|x|^{4N-4}_{L^{2N-2}}\,\gamma(dx)\,\|Ah\|^2_{L^4(H,\gamma)}.
\end{array}
  $$
Now the conclusion follows from \eqref{e4.11}.  
\qed  
\end{proof}

 Let us finally  recall the elementary  identity, see \cite{DaDe14}
\begin{equation}
\label{e3.9aa}
\langle P^{\alpha}_tD_x\varphi,h   \rangle=\langle D_xP^{\alpha}_t\varphi,h   \rangle-\int_0^t P^{\alpha}_{t-s}[\langle Ah+ D_xp^{\alpha}(x)h,D_xP^\alpha_s\varphi\rangle]  \,ds.
 \end{equation}
 where $h\in D(A)$ and $\varphi\in C^1_b(H)$.\bigskip

   \subsection{The range condition}
Let us consider  the Kolmogorov operator
 \begin{equation}
\label{e3.6}
\mathcal K u(t,x)=D_tu(t,x)+\langle F(t,x),D_xu(t,x)  \rangle,
\end{equation}
defined for all    $u\in \mathcal F C^1_{b,T},$   the space of all functions  $u$ defined in Section 1 with $Y=D(A)$.

Now the continuity equation \eqref{e1.2} can be  written as
 \begin{equation}
\label{e3.7}
\int_0^T\int_H\mathcal K u(t,x)\,\rho(t,x)\,\gamma(dx)dt=-\int_Hu(0,x)\rho_0(x)\,\gamma(dx),\quad u\in \mathcal F C^1_{b} .
\end{equation}

The following result has be proven in \cite{DaFlRoe14}.
\begin{Proposition}
\label{p3.1}
Assume that for $p\in[1,\infty)$  the following range condition is fulfilled
\begin{equation}
\label{e3.8}
\mathcal K (\mathcal FC^1_{b,T}) \;\mbox{\rm is dense in}\;L^{p}([0,T]; L^p(H,\gamma)).
\end{equation}
 Then if $\rho_1$ and $\rho_2$ are two solutions of \eqref{e3.7} in $L^{p'}([0,T]; L^{p'}(H,\gamma))$, with $p'=\tfrac{p}{p-1},\;pÕ=\tfrac{p}{p-1}$,
 we have $\rho_1 =\rho_2$.
\end{Proposition}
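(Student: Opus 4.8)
The plan is to argue purely by linearity and duality, so that uniqueness reduces to the fact that a functional vanishing on a dense subspace is the zero functional. First I would set $\rho:=\rho_1-\rho_2$. Since $\rho_1$ and $\rho_2$ are both solutions of \eqref{e3.7}, and the right-hand side of \eqref{e3.7} depends only on the fixed initial datum $\rho_0$, subtracting the two identities cancels the boundary term, yielding
\[
\int_0^T\int_H \mathcal K u(t,x)\,\rho(t,x)\,\gamma(dx)\,dt=0,\qquad\forall\,u\in\mathcal F C^1_{b,T}.
\]
In other words, $\rho\in L^{p'}([0,T];L^{p'}(H,\gamma))$ annihilates every element of $\mathcal K(\mathcal F C^1_{b,T})$ under the natural dual pairing between $L^p$ and $L^{p'}$.

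Next I would check that this pairing is continuous and takes place in the correct ambient space. For $u\in\mathcal F C^1_{b,T}$ both $D_tu$ and $D_xu$ are bounded and $F$ is bounded, so $\mathcal Ku$ is bounded; since $\gamma$ (and $dt$ on $[0,T]$) is finite, $\mathcal Ku\in L^p([0,T];L^p(H,\gamma))$, and hence $\mathcal K(\mathcal F C^1_{b,T})\subset L^p([0,T];L^p(H,\gamma))$, which is exactly the space in which density is asserted in \eqref{e3.8}. By Hölder's inequality the map $g\mapsto \int_0^T\int_H g\,\rho\,d\gamma\,dt$ is then a bounded linear functional $\ell_\rho$ on $L^p([0,T];L^p(H,\gamma))$, with $\rho\in L^{p'}$ supplying the conjugate exponent, and the displayed identity says precisely that $\ell_\rho$ vanishes on the subspace $\mathcal K(\mathcal F C^1_{b,T})$.

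The range condition \eqref{e3.8} now closes the argument. Since $\mathcal K(\mathcal F C^1_{b,T})$ is dense in $L^p([0,T];L^p(H,\gamma))$ and $\ell_\rho$ is continuous and vanishes on it, $\ell_\rho\equiv 0$ on all of $L^p$. For $p\in(1,\infty)$ the isometric identification $L^{p'}=(L^p)^*$ forces $\rho=0$ in $L^{p'}$, i.e. $\rho_1=\rho_2$. In the endpoint case $p=1$, $p'=\infty$, I would instead conclude directly: $\int_0^T\int_H g\,\rho\,d\gamma\,dt=0$ for every $g\in L^1([0,T]\times H,dt\otimes\gamma)$, and testing against $g=\operatorname{sign}(\rho)$, which lies in $L^1$ because $dt\otimes\gamma$ is finite, gives $\int_0^T\int_H|\rho|\,d\gamma\,dt=0$, hence $\rho=0$ $dt\otimes\gamma$-a.e. and again $\rho_1=\rho_2$.

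I do not expect a genuine obstacle here: the statement is a soft functional-analytic duality argument, and the only points requiring mild care are confirming that $\mathcal K(\mathcal F C^1_{b,T})$ really sits inside $L^p$ (so that \eqref{e3.8} is a statement about the right space) and treating the reflexive and the non-reflexive ($p=1$) cases separately. The actual mathematical difficulty of proving uniqueness is entirely displaced into \emph{verifying} the range condition \eqref{e3.8}, which is not part of this proposition and is precisely where the commutator formula \eqref{e3.9aa} and the estimates of the preceding subsection will be needed.
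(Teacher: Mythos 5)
Your argument is correct and is exactly the standard duality proof of this proposition: subtract the two weak formulations to cancel the initial datum, observe that $\rho_1-\rho_2\in L^{p'}$ defines a continuous functional on $L^{p}([0,T];L^p(H,\gamma))$ vanishing on the dense subspace $\mathcal K(\mathcal FC^1_{b,T})$, and conclude $\rho_1=\rho_2$ (with the separate, and correctly handled, endpoint case $p=1$, $p'=\infty$). The paper itself gives no proof here but simply cites \cite{DaFlRoe14}, where the argument is precisely this one; your observation that all the real work is hidden in verifying the range condition \eqref{e3.8} matches the structure of the rest of Section 3.
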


Let now consider    the approximating equation
 \begin{equation}
\label{e3.11}
\left\{\begin{array}{l}
D_tu_j(t,x)+ \langle F_j(t,x), D_xu_j(t,x)   \rangle=f(t,x),\\
\\
u_j(T,\cdot)=0,
\end{array}\right. 
\end{equation}
where  $(F_j)$ where defined in Hypothesis \ref{h2} and  $f\in \mathcal F C^1_{b,T}$. Problem \eqref{e3.11} has a unique classical solution given   by
\begin{equation}
\label{e3.12}
u_j(t,x)=-\int_t^{T}  f(s,\xi_j(s,t,x))ds.
\end{equation}
where $\xi_j$ is the solution to
\begin{equation}
\label{e3.13}
 \frac{d}{dt}\,\xi_j(t)=F_j(t,\xi_j(t)),\quad \xi_j(s)=x.
\end{equation}

Let us  consider a further approximation $P_\epsilon u_j(t,x)$ of $u(t,x)$, where $P_\epsilon$ is the transition semigroup defined in \eqref{e1} and $\epsilon\in(0,1]$.
 Applying  $P_\epsilon$ to both sides of equation \eqref{e3.11} we have 
  $$
D_t (P_\epsilon u_j)+   \langle F,D_xP_\epsilon u_j   \rangle=P_\epsilon f+ \langle F-F_j,D_xP_\epsilon u_j   \rangle+B_\epsilon(F_j,u_j),
$$
where $B_\epsilon(F_j,u_j)$ is the {\em  DiPerna--Lions} commutator defined  for $\epsilon\in(0,1]$  as
\begin{equation}
\label{e3.22c}
B_\epsilon(u,F)(t,x):=\langle D_xP_\epsilon u(t,x),F(t,x)\rangle -P_\epsilon(\langle D_xu(t,x),F(t,x)\rangle) ,\quad \forall\;u\in \mathcal F C^1_{b,T}\,,F\in\mathcal V\mathcal F C^1_{b,T}.
 \end{equation}
Now the range condition follows  provided
\begin{equation}
\label{er}
  \lim_{\epsilon\to 0}\lim_{j\to\infty}B_\epsilon(u_j,F_j)=0\quad \mbox{\rm in}\;u\in L^1([0,T], L^1(H,\gamma)).
  \end{equation}
   
  As shown in \cite{DaFlRoe14},   the basic tool  to show \eqref{er} is provided by an  estimate for the integral
  $$
 \int_0^T \int_H |B_\epsilon(u,F)|\,dt\,d\gamma,\quad \epsilon\in(0,1],\quad \forall\;u\in \mathcal F C^1_{b,T} \,,F\in\mathcal V\mathcal F C^1_{b,T} ,
  $$
in terms of $\|u\|_\infty$  independent of   $\epsilon$.   

\subsection{Main result}

To express the main result of this section we  need some definitions.

\begin{Definition}
We call $\mathcal{V}\left(  H,\gamma\right)  $ the space of all measurable
functions $\phi:H\rightarrow\mathbb{R}$ such that
\[
\left\Vert \phi\right\Vert _{\mathcal{V}\left(  H,\gamma\right)  }^{2}%
:=\sup_{\epsilon\in\left(  0,1\right)  }\int_{H}\phi\left(  x\right)  \left(
\frac{I-P_{\epsilon}}{\epsilon}\right)  \phi\left(  x\right)  \gamma\left(
dx\right)
\]
is finite and we endow $\mathcal{V}\left(  H,\gamma\right)  $ by the norm
$\left\Vert \phi\right\Vert _{\mathcal{V}\left(  H,\gamma\right)  }$.
Similarly we  call $\mathcal{V}\left(  H,H,\gamma\right)  $ the space of all
measurable vector fields $G:H\rightarrow\mathbb{R}$ such that
\[
\left\Vert G\right\Vert _{\mathcal{V}\left(  H,H,\gamma\right)  }^{2}%
:=\sup_{\epsilon\in\left(  0,1\right)  }\int_{H}\left\langle \left(
\frac{I-\mathbf{P}_{\epsilon}}{\epsilon}\right)  G\left(  x\right)  ,G\left(
x\right)  \right\rangle _{H}\gamma\left(  dx\right)
\]
is finite and we endow $\mathcal{V}\left(  H,H,\gamma\right)  $ by the norm
$\left\Vert G\right\Vert _{\mathcal{V}\left(  H,H,\gamma\right)  }$.
\end{Definition}
We note that in the symmetric case ($B=I$), $\mathcal{V}\left(  H,\gamma\right)  $ coincides
with $D((-\mathcal L)^{1/2})$.

\begin{Lemma}
\label{lemma after def new norm}The space $\mathcal{F}C_{b}^{2}(H)$ is contained
in $\mathcal{V}\left(  H,\gamma\right)  $. Similar result holds for every
vector field $G$ of the form $G=\sum_{h=1}^{n}G_{h}e_{h}$, with $G_{h}%
\in\mathcal{F}C_{b}^{2}$ for all $h=1,...,n$.
\end{Lemma}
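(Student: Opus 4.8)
The plan is to reduce the whole statement to a single uniform–in–$\epsilon$ estimate for the scalar quadratic form associated with $\phi\in\mathcal FC^2_b$, using the Kolmogorov generator of $(P_t)$, the invariance of $\gamma$ and the moment bounds \eqref{e4.11}. First I would dispose of the vector field case by reducing it to the scalar one. If $G=\sum_{h=1}^n G_he_h$ with $G_h\in\mathcal FC^2_b$, then by the very definition of $\mathbf P_\epsilon$ one has $\langle\mathbf P_\epsilon G(x),e_k\rangle=\E[\langle G(X(\epsilon,x)),e_k\rangle]=P_\epsilon G_k(x)$, so that $\mathbf P_\epsilon G=\sum_{h=1}^n(P_\epsilon G_h)\,e_h$ and consequently
$$
\int_H\left\langle\left(\frac{I-\mathbf P_\epsilon}\epsilon\right)G,G\right\rangle_H\,d\gamma=\sum_{h=1}^n\int_H G_h\left(\frac{I-P_\epsilon}\epsilon\right)G_h\,d\gamma .
$$
Thus it suffices to bound each scalar summand uniformly in $\epsilon\in(0,1)$.

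For the scalar core, let $\phi(x)=\tilde\phi(\langle h_1,x\rangle,\dots,\langle h_m,x\rangle)$ with $\tilde\phi\in C^2_b(\R^m)$ and $h_i\in Y=D(A)$, and introduce the generator
$$
\mathcal L\phi(x)=\tfrac12\sum_{i,j=1}^m\partial_i\partial_j\tilde\phi\,\langle B^*h_i,B^*h_j\rangle+\sum_{i=1}^m\partial_i\tilde\phi\,\big(\langle x,Ah_i\rangle+\langle p(x),h_i\rangle\big),
$$
the derivatives of $\tilde\phi$ being evaluated at $(\langle h_1,x\rangle,\dots,\langle h_m,x\rangle)$. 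The key identity I would establish is the Dynkin formula $P_\epsilon\phi-\phi=\int_0^\epsilon P_s(\mathcal L\phi)\,ds$; I would obtain it by applying It\^o's formula to $\phi(X_\alpha(\cdot,x))$ for the regularized problem \eqref{e4.9} (whose drift $p_\alpha$ is Lipschitz), taking expectations, and then letting $\alpha\to0$ via Proposition \ref{p4.8} together with the moment estimates \eqref{e4.11}. Granting this, using $|P_sg|\le P_s|g|$ and the invariance $\int_H P_s|\mathcal L\phi|\,d\gamma=\int_H|\mathcal L\phi|\,d\gamma$, one gets for every $\epsilon\in(0,1)$
$$
\left|\int_H\phi\left(\frac{I-P_\epsilon}\epsilon\right)\phi\,d\gamma\right|\le\frac1\epsilon\int_0^\epsilon\int_H|\phi|\,P_s|\mathcal L\phi|\,d\gamma\,ds\le\|\phi\|_\infty\int_H|\mathcal L\phi|\,d\gamma ,
$$
which is finite and independent of $\epsilon$; hence $\phi\in\mathcal V(H,\gamma)$, and with the reduction above also $G\in\mathcal V(H,H,\gamma)$.

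It remains, and this is where the real work lies, to check that $\mathcal L\phi\in L^1(H,\gamma)$ despite the unbounded coefficients. The second order term is constant, bounded by $\tfrac12\|B\|^2_{\mathcal L(H)}\big(\sum_i|h_i|_H\big)^2\sup_{i,j}\|\partial_i\partial_j\tilde\phi\|_\infty$. For the first order terms I would use $|\langle x,Ah_i\rangle|\le|x|_H|Ah_i|_H$ and $|\langle p(x),h_i\rangle|\le|h_i|_H\,|p(x)|_{L^2(0,1)}$ with $|p(x)|^2_{L^2(0,1)}\le C\,(1+|x|^{2N}_{L^{2N}(0,1)})$, since $p$ has degree $N$; then \eqref{e4.11} makes both $x\mapsto|x|_H$ and $x\mapsto|p(x)|_{L^2(0,1)}$ $\gamma$-integrable, so $\mathcal L\phi\in L^1(H,\gamma)$ (indeed $L^2$). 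The main obstacle I anticipate is precisely the interplay between the mild/generalized nature of the solution of \eqref{e3.1} and the unboundedness of $\mathcal L\phi$: the Dynkin formula must be justified by passing to the limit in the Yosida approximation, which requires uniform integrability of $\mathcal L_\alpha\phi$ along $X_\alpha$, and this is exactly what the moment bounds \eqref{e4.11} (uniform in $\alpha$) are designed to provide.
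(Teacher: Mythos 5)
Your proposal is correct and rests on the same key identity as the paper's proof, namely the Dynkin formula $P_\epsilon\phi-\phi=\int_0^\epsilon P_s\mathcal L\phi\,ds$; but it diverges at the one point where the argument actually requires care, and your version is the more defensible one. The paper disposes of the matter by asserting that $\mathcal L\phi$ is a \emph{bounded} continuous function, so that $(I-P_\epsilon)\phi/\epsilon$ is uniformly bounded and the integral defining $\|\phi\|^2_{\mathcal V(H,\gamma)}$ is trivially finite. That boundedness claim is doubtful as stated: for $\phi(x)=\widetilde\phi(\langle h_1,x\rangle,\dots,\langle h_m,x\rangle)$ with $h_i\in D(A)$, the drift contributes terms $\partial_i\widetilde\phi\,\langle x,Ah_i\rangle$ (linear, hence unbounded in $x$) and $\partial_i\widetilde\phi\,\langle p(x),h_i\rangle$ (polynomial of degree $N$ in $x$), and the paper's parenthetical justification does not resolve this. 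Your route replaces boundedness by the weaker and actually available property $\mathcal L\phi\in L^1(H,\gamma)$, obtained from the moment bounds \eqref{e4.11}, and then uses $|P_s g|\le P_s|g|$ together with the invariance of $\gamma$ to get the $\epsilon$-uniform bound $\|\phi\|_\infty\int_H|\mathcal L\phi|\,d\gamma$. Since the norm $\|\cdot\|_{\mathcal V(H,\gamma)}$ only sees the integral against $\gamma$, integrability is exactly what is needed, so your argument both proves the lemma and quietly repairs the paper's proof. You also make explicit the reduction of the vector-field case to scalar components via $\mathbf P_\epsilon G=\sum_h(P_\epsilon G_h)e_h$, which the paper leaves as ``similar.'' The one step you should not underestimate is the justification of the Dynkin formula itself: It\^o's formula applies cleanly to the Lipschitz approximation \eqref{e4.9}, and the passage $\alpha\to0$ needs uniform integrability of $p_\alpha(X_\alpha(s,x))$ along the trajectories, for which the bounds \eqref{e4.11} on the invariant measures alone do not suffice; one needs the pathwise $L^{2N}$ moment estimates for $X_\alpha(\cdot,x)$ (available in \cite{Da04}), or one should integrate the approximate Dynkin identity against $\gamma_\alpha$ from the start and pass to the limit there. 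With that caveat made precise, the proof is complete.
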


\begin{proof}
We have%
\[
\left(  I-P_{\epsilon}\right)  \phi\left(  x\right)  =\int_{0}^{\epsilon}%
P_{s}\mathcal{L}\phi\left(  x\right)  ds
\]
where $\mathcal{L}$ is the infinitesimal generator of $P_{t}$. One can check
that $\mathcal{L}\phi$ is a bounded continuous function; in particular this is
true for the term $\left\langle p\left(  x\right)  ,D_{x}\phi\left(  x\right)
\right\rangle $ because the argument of $\phi$ is in the space of continuous
functions. Hence $\left(  \frac{I-P_{\epsilon}}{\epsilon}\right)  \phi$ is
also bounded and thus $\phi\in\mathcal{V}\left(  H,\gamma\right)  $.
\qed
\end{proof}

Finally, we have our main estimate. Given  $\theta_0\in(\tfrac14,\tfrac12)$ and    $\theta   \in\left(  \theta_{0},\frac{1}{2}\right)$, we define
\begin{align*}
\left\Vert F\right\Vert _{p,q,\gamma,T}  & :=\left\Vert \left(  -A\right)
^{\theta_{0}}F\right\Vert _{L^{\frac{p}{p-1}}\left(  0,T;\mathcal{V}\left(
H,H,\gamma\right)  \right)  }\\
& +\left\Vert \left(  -A\right)  ^{1/2+\theta}F\right\Vert _{L^{\frac{p}{p-1}%
}\left(  0,T;L^{q}\left(  H,\gamma\right)  \right)  }+\left\Vert
\operatorname{div}F\right\Vert _{L^{\frac{p}{p-1}}\left(  0,T;L^{\frac{p}%
{p-1}}\left(  H,\gamma\right)  \right)  }.
\end{align*}

 \begin{Theorem}
\label{tmain}
For every $p,q$ satisfying
$$
p   \in(2,\infty), \quad \frac{1}{p}+\frac{1}{q}   <1, 
$$
for every   vector field $F:\left[  0,T\right]  \times H\rightarrow
D\left(  \left(  -A\right)  ^{1/2+\theta}\right)  $ such that $\left\Vert
F\right\Vert _{p,q,\gamma,T}$ is finite, there is  at most one  solution of the continuity equation in  $L^{q'}([0,T]; L^{p'}(H,\gamma))$, with $p'=\tfrac{p}{p-1},\;q'=\tfrac{q}{q-1}$.
\end{Theorem}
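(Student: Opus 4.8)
The plan is to deduce uniqueness from the range condition \eqref{e3.8} via Proposition \ref{p3.1}, in a mixed-exponent form: it suffices to show that $\mathcal{K}(\mathcal{F}C^1_{b,T})$ is dense in $L^{q}([0,T];L^{p}(H,\gamma))$, which by the duality argument of Proposition \ref{p3.1} forces any two solutions in $L^{q'}([0,T];L^{p'}(H,\gamma))$ to coincide. So I fix $f\in\mathcal{F}C^1_{b,T}$ and try to approximate it by elements of $\mathcal{K}(\mathcal{F}C^1_{b,T})$. I would first solve the regularized transport equation \eqref{e3.11} with the smooth fields $F_j$ from Hypothesis \ref{h2}, getting the explicit flow solution $u_j$ in \eqref{e3.12}; from this representation one reads off $\|u_j\|_\infty\le T\|f\|_\infty$ uniformly in $j$, which is the crucial a priori sup-bound.

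Since $u_j\notin\mathcal{F}C^1_{b,T}$ and $F_j\ne F$, I would mollify with the transition semigroup and apply $P_\epsilon$ to \eqref{e3.11}, obtaining
\[
\mathcal{K}(P_\epsilon u_j)=P_\epsilon f+\langle F-F_j,D_xP_\epsilon u_j\rangle+B_\epsilon(u_j,F_j),
\]
with $B_\epsilon$ the DiPerna--Lions commutator \eqref{e3.22c}. Here $P_\epsilon u_j$ (suitably truncated) lies in the space on which $\mathcal{K}$ acts, so the left-hand side belongs to the closure of $\mathcal{K}(\mathcal{F}C^1_{b,T})$. On the right, $P_\epsilon f\to f$ by strong continuity of the semigroup, and $\langle F-F_j,D_xP_\epsilon u_j\rangle\to0$ as $j\to\infty$ because $F_j\to F$ while $D_xP_\epsilon u_j$ stays controlled for fixed $\epsilon$. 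Thus the whole scheme reduces to proving \eqref{er}, i.e. that the commutator tends to $0$ in $L^1([0,T];L^1(H,\gamma))$ in the iterated limit $\epsilon\to0$, $j\to\infty$. All the a priori tools below are available at the Yosida-regularized level $P^\alpha_\epsilon$ and are uniform in $\alpha\in(0,1]$, so I would establish the estimate for $P^\alpha_\epsilon$ and then let $\alpha\to0$ using Propositions \ref{p4.8} and \ref{p4.16} (in particular the weak convergence $\gamma^\alpha\to\gamma$ and the uniform moment bounds \eqref{e4.11}).

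The heart of the proof is the commutator estimate. Using the identity \eqref{e3.9aa} direction by direction against the basis $\{e_k\}$ and the definition of $\mathbf{P}_\epsilon$, I would split $B_\epsilon(u_j,F_j)$ into a gradient--transport part built from $\int_0^\epsilon P_{\epsilon-s}[\langle Ae_k+D_xp(x)e_k,D_xP_su_j\rangle]\,ds$ and a genuinely DiPerna--Lions oscillation part of the form $\mathbb{E}[\langle D_xu_j(X_\epsilon),F_j(x)-F_j(X_\epsilon)\rangle]$. For the first part, the Bismut--Elworthy--Li formula \eqref{e4.10} together with the bound $|\eta_\alpha^h|_H\le|h|_H$ from Lemma \ref{l3.1a} gives the smoothing estimate $|D_xP_su_j|\le Cs^{-1/2}\|u_j\|_\infty$; writing $Ae_k=-(-A)^{1/2-\theta}(-A)^{1/2+\theta}e_k$ and moving the self-adjoint factor $(-A)^{1/2-\theta}$ onto $D_xP_su_j$ produces an integrable time singularity $s^{-1+\theta}$, while $(-A)^{1/2+\theta}$ lands on $F_j$ and is paired against it, after a Cauchy--Schwarz over the basis and the $L^p(H,\gamma)$-contractivity of $P_{\epsilon-s}$, through the norm $\|(-A)^{1/2+\theta}F\|_{L^{p'}(0,T;L^q(H,\gamma))}$ (this is where $p\in(2,\infty)$ and $1/p+1/q<1$ enter the Hölder estimates). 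The zero-order contribution $\langle D_xp(x)e_k,\cdot\rangle$ is absorbed by Corollary \ref{c3.6}, and the divergence term $\|\operatorname{div}F\|_{L^{p'}(0,T;L^{p'}(H,\gamma))}$ controls the part coming from differentiating the flow. For the oscillation part I would use $\|(-A)^{\theta_0}F\|_{\mathcal{V}(H,H,\gamma)}$: the operator $(I-\mathbf{P}_\epsilon)/\epsilon$ measures exactly the mean-square oscillation $F(x)-F(X_\epsilon)$, and Lemma \ref{l3.3j} (that $(-A)^{-\theta_0}$ is Hilbert--Schmidt) converts the $\theta_0$-loss into a summable trace over the basis directions. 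The outcome is a bound $\int_0^T\!\int_H|B_\epsilon(u_j,F_j)|\,d\gamma\,dt\le C\,\|u_j\|_\infty\,\|F_j\|_{p,q,\gamma,T}$ uniform in $\epsilon$ and $j$, together with convergence to $0$ in the iterated limit.

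The step I expect to be the main obstacle is precisely this commutator estimate, and within it the most singular ``$Ae_k$'' term. Because $P_\epsilon$ is not symmetric (as $B\ne I$) I cannot invoke the spectral calculus or Dirichlet-form machinery used in the Gaussian/Gibbsian case of \cite{DaFlRoe14}; the fractional smoothing of the non-symmetric semigroup has to be extracted purely from \eqref{e4.10}, \eqref{e3.9aa} and Lemma \ref{l3.1a}, and then matched exactly against the fractional Sobolev regularity $(-A)^{1/2+\theta}F$ and the $\mathcal{V}$-norm of $(-A)^{\theta_0}F$. Balancing these exponents so that the time singularity $s^{-1+\theta}$ stays integrable and the sum over $k$ converges via Lemma \ref{l3.3j} is the delicate point, and the admissible range $p\in(2,\infty)$, $1/p+1/q<1$ is exactly what makes all the Hölder estimates close.
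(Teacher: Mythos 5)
Your proposal is correct and follows essentially the same route as the paper: uniqueness via the range condition and Proposition \ref{p3.1}, reduction to the commutator limit \eqref{er} through the regularized transport equation \eqref{e3.11} and the semigroup mollification, and the commutator estimate obtained by splitting $B^\alpha_\epsilon(u,F)$ via the identity \eqref{e3.9aa} into an oscillation term (controlled by $\|(-A)^{\theta_0}F\|_{\mathcal V(H,H,\gamma)}$ and Lemma \ref{l3.3j}), a drift term involving $A+D_xp_\alpha$ (controlled by $\|(-A)^{1/2+\theta}F\|_{L^q}$), and a divergence term, all at the Yosida level uniformly in $\alpha$ before letting $\alpha\to0$. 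This is precisely the chain Lemma \ref{l3.10}, Propositions \ref{prop prima stima} and \ref{prop B alpha}, Corollary \ref{corollary est} and Theorem \ref{trank} in the paper, the only cosmetic difference being that the paper phrases the final bound in terms of $\|u\|_{L^p}$ rather than $\|u\|_\infty$.
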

\begin{proof}
The conclusion of the theorem follows from the rank condition proved in Theorem \ref{trank} below, and Proposition \ref{p3.1}.
\qed
\end{proof}

\begin{Corollary}
\label{c3.11primo}
If $B$ in \eqref{e3.1} is the identity, then  under the conditions of Theorem   \ref{tmain} there exists a unique solution of the continuity equation in $L^{q'}([0,T];L^{q'}(H,\gamma))$
\end{Corollary}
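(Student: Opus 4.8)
The plan is to split the statement into its two halves. \emph{Uniqueness} I would get essentially for free: under the standing assumptions we are exactly in the setting of Theorem \ref{tmain}, which already grants at most one solution in $L^{q'}([0,T];L^{p'}(H,\gamma))$. The role of the two arithmetic constraints is then purely to line up function spaces. Since $p>2$ we have $p\ge p'$ and hence $L^{p}(H,\gamma)\subset L^{p'}(H,\gamma)$, and since $\tfrac1p+\tfrac1q<1$ we have $p>q'$ and hence $L^{p}(H,\gamma)\subset L^{q'}(H,\gamma)$. Consequently it suffices to \emph{construct} a solution lying in $L^{\infty}([0,T];L^{p}(H,\gamma))$: such a solution belongs both to the uniqueness class $L^{q'}([0,T];L^{p'}(H,\gamma))$ of Theorem \ref{tmain} and to the target class $L^{q'}([0,T];L^{q'}(H,\gamma))$, so the whole Corollary reduces to an existence statement.

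For existence the decisive observation is that when $B=I$ equation \eqref{e3.1} is a gradient system: its drift equals $\nabla\Phi$ with $\Phi(x)=\tfrac12\langle Ax,x\rangle+\int_0^1 P(x(\xi))\,d\xi$ and $P'=p$, so the invariant measure is the explicit Gibbs measure
\[
\gamma(dx)=\frac1Z\,\exp\Big(2\int_0^1 P(x(\xi))\,d\xi\Big)\,N\big(0,\tfrac12(-A)^{-1}\big)(dx).
\]
Because $p$ is decreasing of odd degree, $P$ is bounded above, so this density is bounded and strictly positive and $\gamma$ is comparable to the Gaussian $N(0,\tfrac12(-A)^{-1})$. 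Structurally this is exactly the measure of Example \ref{ex1.3}(ii), so I would simply repeat the verification of Example \ref{ex2.10}: since $N(0,\tfrac12(-A)^{-1})$ charges $C([0,1])$, the finite-dimensional disintegration densities $\Psi_N^2(\cdot,y)$ are continuous and strictly positive for $\nu_N$-a.e.\ $y$, which is precisely \eqref{e1.14m}, i.e.\ Hypothesis \ref{h2}(ii). This is the condition left open for general $B$ in Remark \ref{r1.5primo}, and the symmetry $B=I$ is exactly what unlocks it. Hypothesis \ref{h1} is already provided by \cite{DaDe17} (Example \ref{ex1.3}(iii)), here with the explicit logarithmic derivatives $\beta_{e_i}(x)=-2\alpha_i\langle x,e_i\rangle+2\langle p(x),e_i\rangle$.

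With both hypotheses in force I would run the power-type version of Theorem \ref{t2.1n} sketched in Remark \ref{r2.9}, replacing the entropy $r\mapsto r(\ln r-1)$ by the Young function $r\mapsto r^{p}$. This yields a solution in $L^{\infty}([0,T];L^{p}(H,\gamma))$ for $\rho_0\in L^{p}(H,\gamma)$, provided $D_x^*F=-\operatorname{div}F-\langle\beta,F\rangle$ is controlled in $L^{p'}([0,T]\times H,dt\otimes\gamma)$ uniformly along the approximations $F_j$. The divergence term is the last summand of $\|F\|_{p,q,\gamma,T}$. For $\langle\beta,F\rangle=-2\langle(-A)x,F\rangle+2\langle p(x),F\rangle$ I would split $(-A)=(-A)^{s}(-A)^{1-s}$ with $s<\tfrac14$ chosen so that $1-s\le\tfrac12+\theta$ (possible since $\theta>\theta_0>\tfrac14$), estimating $\langle(-A)x,F\rangle=\langle(-A)^{s}x,(-A)^{1-s}F\rangle$ by the $\gamma$-moments of $(-A)^{s}x$ (finite as $s<\tfrac14$ and $\gamma$ is comparable to the Gaussian) together with the $\|(-A)^{1/2+\theta}F\|_{L^q}$ summand, and bounding $\langle p(x),F\rangle\le\|F\|_\infty|p(x)|_H$ by the polynomial moment estimate \eqref{e4.11}. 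Both pieces land in $L^q\subset L^{p'}$ (recall $q>p'$), so $D_x^*F\in L^{p'}$ as needed.

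The conceptual step — spotting the Gibbs structure, hence Hypothesis \ref{h2}(ii) — is short. The hard part will be the exponent bookkeeping just outlined: the norm $\|F\|_{p,q,\gamma,T}$ was designed for the commutator estimate behind Theorem \ref{tmain}, with its mixed exponents $(p',q)$ and the fractional weights $(-A)^{\theta_0}$, $(-A)^{1/2+\theta}$, and one must check that these same three quantities deliver exactly the $L^{p'}$-control of $D_x^*F$ consumed by the (power-Young) analogue of Lemma \ref{l2.6n}, uniformly in the regularizing parameters $M,l,j$. Equally delicate is confirming that the solution so produced really sits in $L^{\infty}([0,T];L^{p}(H,\gamma))$, since it is the inclusions $p\ge p'$ and $p>q'$ applied to this space that feed both the existence conclusion and the uniqueness step; establishing this compatibility of spaces, rather than any single inequality, is where the real work lies.
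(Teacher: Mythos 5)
Your proposal is correct and takes essentially the same route as the paper, whose own proof is a two-line citation: uniqueness from Theorem \ref{tmain} via Proposition \ref{p3.1}, and existence from Theorem \ref{t2.1n} combined with the $L^p$-Young-function modification of Remark \ref{r2.9}. You additionally spell out the details the paper leaves implicit --- that $B=I$ makes $\gamma$ an explicit Gibbs measure so that Hypothesis \ref{h2}(ii) can be verified as in Example \ref{ex2.10} (exactly the point singled out in Remark \ref{r3.12primo}), the embedding arithmetic between $L^p$, $L^{p'}$ and $L^{q'}$, and the $L^{p'}$ control of $D_x^*F$ from $\|F\|_{p,q,\gamma,T}$ --- which is a faithful, more explicit rendering of the same argument.
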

\begin{proof}
The existence follows by Theorem \ref{t2.1n} and Remark \ref{r2.9}.
\qed
\end{proof}
 
\begin{Remark}
 \label{r3.12primo}
 \em As already mentioned in Remark \ref{r1.5primo}, so far we cannot prove whether Hypothesis \ref{h2}(ii) holds for $\gamma$ as in Example
 \ref{ex1.3}(iii), if $B$ in \eqref{e3.1bis}, \eqref{e3.1} is not the identity operator. In this case it was proved in \cite{BoDaRoe96}, \cite{DaDe04}  that $\gamma$ has a density $f$ with respect to $\gamma_0:=N(0,\tfrac12\,(-A)^{-1})$
 such that $\sqrt f\in W^{1,2}(H,\gamma_0)$, i.e. the Sobolev space of order  $1$ in $L^2(H,\gamma_0)$. To verify Hypothesis \ref{h2}(ii) it would be enough to show that $x\mapsto f(x,y), (x,y)\in H_N\oplus E_N$, is continuous and strictly positive on $H_N$, for all $N\in\N$ and $\nu_N$--a.e. $y\in E_N$, where $A,\,H_N,\,E_N$ and $\nu_N$ are as in Example \ref{ex2.10}. However, so far  we did not succeed to prove this. If this could be shown, Corollary \ref{c3.11primo} would hold for any
$B$ in \eqref{e3.1bis}, \eqref{e3.1}.
 \end{Remark}

 \subsection{Estimating the   commutator}
 We first express the   DiPerna--Lions  commutator
 $ B_\epsilon(u,F) $ using the identity \eqref{e3.9aa}. It is convenient to introduce the approximating commutator
 \begin{equation}
\label{e3.22cc}
B^\alpha_\epsilon(u,F)(t,x):=D_xP^\alpha_\epsilon u(t,x)\cdot F(t,x)-P^\alpha_\epsilon(D_xu(t,x)\cdot F(t,x)),\quad \forall\;u\in \mathcal F C^1_{b,T}(H)\,,F\in\mathcal V\mathcal F C^1_{b,T}(H)
 \end{equation}
 for any $\alpha\in(0,1]$.
 
 \begin{Lemma}
 \label{l3.10}
 Assume that 
 $ F=\sum_{h=1}^n F^h e_h, $ with  $F^h \in \mathcal V\mathcal F C^1_{b,T}(D(A))$,  $h=1,...,n$. Then we have
 \begin{equation}
\label{e3.29k}
\begin{array}{l}
\ds  B^\alpha_\epsilon(u,F)=\frac1\epsilon\,\E \left [u(t,X_\alpha(\epsilon,x))(F(t,x)-F(t,X_\alpha(\epsilon,x))\cdot \int_0^\epsilon (D_xX_\alpha(\eta,x))^* \pi_n(B^{-1})^*dW(\eta)\right]\\
\\
\ds+ \int_0^\epsilon P^\alpha_{\epsilon-\eta}\Bigg\{\frac1\eta \E\Big[u(t,X_\alpha(\eta,x))\,\\
\\
\ds\hspace{20mm} \times \Big< F(t,X_\alpha(\eta,x)),\int_0^\eta  (A+Dp_\alpha(x)))(D_xX_\alpha(\lambda,x))^*\pi_n(B^{-1})^*\,dW(\lambda) \Big>\Big]\Bigg\}d\eta\\
\\
\hspace{20mm}+P^\alpha_\epsilon(u\,\mbox{\rm div}\,F),
\end{array} 
\end{equation}
where  $\pi_n$ is the orthogonal projector on $(e_1,...,e_n)$.
  \end{Lemma}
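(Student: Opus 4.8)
The plan is to start from the definition \eqref{e3.22cc}, to peel off a divergence term by an elementary integration by parts, and then to convert the remaining ``gradient against $F$'' terms into It\^o integrals by means of the Bismut--Elworthy--Li formula \eqref{e4.10} and the commutator identity \eqref{e3.9aa}. Throughout I write $F=\sum_{h=1}^nF^he_h$ with $F^h\in\mathcal V\mathcal F C^1_{b,T}(D(A))$ and use repeatedly that $\eta^h_\alpha(\lambda,x)=D_xX_\alpha(\lambda,x)\cdot h$ is linear in $h$, so that $\sum_h c_h\eta^{e_h}_\alpha=D_xX_\alpha\big(\sum_h c_he_h\big)$ for any coefficients $c_h$. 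The elementary identity $\langle D_xu,F\rangle=\mbox{\rm div}(uF)-u\,\mbox{\rm div}F$ gives
\[
B^\alpha_\epsilon(u,F)=\langle D_xP^\alpha_\epsilon u,F\rangle-P^\alpha_\epsilon\big(\mbox{\rm div}(uF)\big)+P^\alpha_\epsilon(u\,\mbox{\rm div}F),
\]
whose last summand is already the third term of \eqref{e3.29k}. Writing $P^\alpha_\epsilon(\mbox{\rm div}(uF))=\sum_h\langle P^\alpha_\epsilon D_x(uF^h),e_h\rangle$ and applying \eqref{e3.9aa} to each scalar $\varphi=uF^h\in\mathcal F C^1_{b,T}$ (legitimate since $e_h\in D(A)$) produces
\[
P^\alpha_\epsilon(\mbox{\rm div}(uF))=\sum_h\langle D_xP^\alpha_\epsilon(uF^h),e_h\rangle-\sum_h\int_0^\epsilon P^\alpha_{\epsilon-s}\big[\langle Ae_h+Dp_\alpha e_h,D_xP^\alpha_s(uF^h)\rangle\big]\,ds.
\]

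Next I would apply the Bismut--Elworthy--Li formula \eqref{e4.10} to the two terms $\langle D_xP^\alpha_\epsilon u,F(t,x)\rangle$ and $\sum_h\langle D_xP^\alpha_\epsilon(uF^h),e_h\rangle$. For the first, $\langle D_xP^\alpha_\epsilon u,F(t,x)\rangle=\sum_hF^h(t,x)\langle D_xP^\alpha_\epsilon u,e_h\rangle$, and by \eqref{e4.10} and the linearity of $\eta^{\,\cdot}_\alpha$ this equals $\tfrac1\epsilon\E[u(t,X_\alpha(\epsilon,x))\langle F(t,x),\int_0^\epsilon(D_xX_\alpha(\eta,x))^*\pi_n(B^{-1})^*dW(\eta)\rangle]$, after transferring $B^{-1}$ and $D_xX_\alpha$ to their adjoints and inserting $\pi_n$ since $F\in\mbox{\rm lin span}\,(e_1,\dots,e_n)$. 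The same computation applied to $\sum_h\langle D_xP^\alpha_\epsilon(uF^h),e_h\rangle$ gives the identical expression but with $F$ evaluated at the endpoint $X_\alpha(\epsilon,x)$. Subtracting, the factor $u(t,X_\alpha(\epsilon,x))$ is common and the difference of the two integrands is exactly $F(t,x)-F(t,X_\alpha(\epsilon,x))$, which is the first term of \eqref{e3.29k}.

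It then remains to identify the time--integral term. For a fixed base point $x$ and fixed $s$ the vector $Ae_h+Dp_\alpha(x)e_h$ lies in $H$ (as $e_h\in D(A)$), so I may apply \eqref{e4.10} to $\varphi=uF^h$ in that direction; summing over $h$, using linearity together with the selfadjointness of $A$, of the multiplication operator $Dp_\alpha(x)$ and of $\pi_n$, the bracket $\sum_h\langle Ae_h+Dp_\alpha e_h,D_xP^\alpha_s(uF^h)\rangle$ turns into $\tfrac1s\E[u(t,X_\alpha(s,x))\langle F(t,X_\alpha(s,x)),\int_0^s(A+Dp_\alpha(x))(D_xX_\alpha(\lambda,x))^*\pi_n(B^{-1})^*dW(\lambda)\rangle]$. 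Renaming $s=\eta$ and keeping $P^\alpha_{\epsilon-\eta}$ in front yields precisely the second term of \eqref{e3.29k}, and collecting the three contributions proves the identity.

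I expect the delicate point to be this last step: the ``test direction'' $Ae_h+Dp_\alpha(x)e_h$ involves the unbounded operator $A$ and the derivative $Dp_\alpha$, so one must justify applying \eqref{e4.10} pointwise in this base--point--dependent direction and interchanging the sum over $h$, the integral $\int_0^\epsilon(\cdot)\,ds$ and the expectation. The required integrability is furnished by Lemma \ref{l3.1a}, which gives $|\eta^{Ae_h+Dp_\alpha e_h}_\alpha(\lambda,x)|_H\le|Ae_h+Dp_\alpha(x)e_h|_H$ and hence uniform control of the It\^o integrals in $\lambda$; the boundedness of $B^{-1}$ and of $u,F^h$ together with their first derivatives then makes each interchange legitimate by Fubini's theorem and dominated convergence.
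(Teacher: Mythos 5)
Your proposal is correct and follows essentially the same route as the paper's proof: the identical decomposition $B^\alpha_\epsilon(u,F)=I_1+I_2+I_3$ obtained from the product rule $\langle D_xu,F\rangle=\operatorname{div}(uF)-u\operatorname{div}F$ and the commutator identity \eqref{e3.9aa}, followed by the Bismut--Elworthy--Li formula \eqref{e4.10} applied to $u$ and to $uF^h$ (for $I_2$ in the direction $Ae_h+Dp_\alpha e_h$). The integrability remarks you add at the end are sensible but not part of the paper's argument, which performs these steps formally.
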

  \begin{proof}
 Taking  into account \eqref{e3.9aa}, we write
  \begin{equation}
\label{e3.29c}
\begin{array}{l}
\ds P^\alpha_\epsilon(Du\cdot F)=\ds\sum_{h=1}^nP^\alpha_\epsilon(D_hu\,F^h)=
\sum_{h=1}^n P^\alpha_\epsilon(D_h(uF^h))- P^\alpha_\epsilon(u\,\mbox{\rm div}\,F)\\
\\
\ds=\sum_{h=1}^n D_hP^\alpha_\epsilon(uF^h)- \sum_{h=1}^n\int_0^\epsilon P^\alpha_{\epsilon-\eta}\left[D_x P^\alpha_\eta(uF_h)\cdot(Ae_h+Dp_\alpha e_h)\right]d\eta-
P^\alpha_\epsilon(u\,\mbox{\rm div}\,F).
\end{array}
\end{equation}  
Therefore
 \begin{equation}
\label{e3.29d}
\begin{array}{l}
\ds  B^\alpha_\epsilon(u,F)=\sum_{h=1}^n[D_hP^\alpha_\epsilon(u)F_h-D_h(P^\alpha_\epsilon(uF_h)]\\
\\
\ds+\sum_{h=1}^n\int_0^\epsilon P^\alpha_{\epsilon-\eta}[D_xP^\alpha_\eta(uF_h)\cdot(Ae_h+D_xp_\alpha(x)e_h)]d\eta+P^\alpha_\epsilon(u\,\mbox{\rm div}\,F)\\
\\
=:I_1+I_2+I_3.
\end{array} 
\end{equation}
  Let us write $I_1$ and $I_2$  in a more compact way. Recalling the Bismut-Elworthy-Li formula \eqref{e4.10} we have
 \begin{equation}
\label{e3.29e}
\begin{array}{l}
\ds  I_1=\frac1\epsilon\,\sum_{h=1}^n\E \left [u(t,X_\alpha(\epsilon,x))(F_h(t,x)-F_h(t,X_\alpha(\epsilon,x))\int_0^\epsilon D_xX_\alpha(\eta,x)e_h)\cdot \pi_n(B^{-1})^*dW(\eta)\right]\\
\\
\ds=\frac1\epsilon\,\E \left [u(t,X_\alpha(\epsilon,x)(F(t,x)-F(t,X_\alpha(\epsilon,x))\cdot \int_0^\epsilon (D_xX_\alpha(\eta,x))^* \pi_n(B^{-1})^*dW(\eta)\right]
\end{array} 
\end{equation}
(the last integral is well defined because obviously $\pi_n(B^{-1})^*(X_x(\eta,x))^*$ is Hilbert--Schmidt.)  As for $I_2$ we have, using again \eqref{e4.10}
 \begin{equation}
\label{e3.29l}
\begin{array}{l}
\ds  I_2= \sum_{h=1}^n\int_0^\epsilon P^\alpha_{\epsilon-\eta}[D_xP^\alpha_\eta(uF_h)\cdot(Ae_h+D_xp_\alpha(x)e_h)]d\eta\\
\\
\ds=\sum_{h=1}^n\int_0^\epsilon P^\alpha_{\epsilon-\eta}\Bigg\{\frac1\eta \E\Big[u(t,X_\alpha(\eta,x))\,F_h(t,X_\alpha(\eta,x)))\\
\\
\ds\hspace{20mm} \times\int_0^\eta\langle B^{-1}D_xX_\alpha(\lambda,x)(A\pi_ne_h+D_xp_\alpha \pi_ne_h),dW(\lambda)\rangle\Big]\Bigg\}d\eta\\
\\
\ds= \int_0^\epsilon P^\alpha_{\epsilon-\eta}\Bigg\{\frac1\eta \E\Big[u(t,X_\alpha(\eta,x))\,  F(t,X_\alpha(\eta,x)))\\
\\
\ds\hspace{20mm} \cdot\int_0^\eta   (A+D_xp_\alpha (x))( (D_xX_\alpha(\eta,x))^*\,\pi_n(B^{-1})^*dW(\lambda) \Big]\Bigg\}d\eta.
\end{array} 
\end{equation}
So,  \eqref{e3.29k} follows.
\qed
\end{proof}
The following corollary is a consequence of Lemma \ref{l3.10} taking into account the invariance of $\gamma_\alpha$.
\begin{Corollary}
 \label{c3.10}
 Assume that 
 $ F=\sum_{h=1}^n F^h e_h, $ with  $F^h \in \mathcal F C^1_b(D(A))$,  $h=1,...,n$. Then we have, 
 \begin{equation}
\label{e3.29h}
\begin{array}{l}
\ds  \int_H|B^\alpha_\epsilon(u,F)|d\gamma_\alpha\\
\\
\ds\le\frac1\epsilon\,\int_H\E \left |u(t,X_\alpha(\epsilon,x))(F(t,x)-F(t,X_\alpha(\epsilon,x))\cdot \int_0^\epsilon (D_xX_\alpha(\eta,x))^* \pi_n(B^{-1})^*dW(\eta)\right|\,d\gamma_\alpha\\
\\
\ds+\int_H \int_0^\epsilon  \frac1\eta \E\Big|u(X_\alpha(\eta,x))\,  F(X(\eta,x)))\\
\\
\ds\hspace{20mm}\cdot \int_0^\eta (A+D_xp_\alpha(x)))(D_xX_\alpha(\eta,x))^*\pi_n(B^{-1})^*dW(\lambda)\Big|d\eta\,d\gamma_\alpha\\
\\
\ds+\int_H|u\,\mbox{\rm div}\,F|\,d\gamma_\alpha=:J_1+J_2+J_3.
\end{array} 
\end{equation}
 \end{Corollary}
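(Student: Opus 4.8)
The plan is to start from the exact representation of the approximating commutator obtained in Lemma \ref{l3.10}, namely the decomposition $B^\alpha_\epsilon(u,F)=I_1+I_2+I_3$ of \eqref{e3.29d}, with $I_1$ and $I_2$ rewritten in the compact stochastic form \eqref{e3.29e}, \eqref{e3.29l} (collected in \eqref{e3.29k}). The inequality \eqref{e3.29h} then follows by applying the triangle inequality term by term, so that $J_1$, $J_2$, $J_3$ arise from $I_1$, $I_2$, $I_3$ respectively after moving absolute values inside the expectations, inside the semigroups $P^\alpha_{\epsilon-\eta}$, and using the invariance of $\gamma_\alpha$ under $P^\alpha_t$ from Proposition \ref{p4.16}.

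The three terms are handled as follows. For $I_1$, which carries no outer semigroup, I would simply use $|\E[\,\cdot\,]|\le\E|\,\cdot\,|$ and integrate in $x$ against $\gamma_\alpha$; this produces $J_1$ directly, with no appeal to invariance. For $I_3=P^\alpha_\epsilon(u\,\mbox{\rm div}\,F)$, I use that $P^\alpha_t$ is a Markov (hence positivity-preserving) contraction semigroup, so that $|P^\alpha_\epsilon g|\le P^\alpha_\epsilon|g|$ pointwise; integrating against $\gamma_\alpha$ and invoking the invariance identity $\int_H P^\alpha_\epsilon\phi\,d\gamma_\alpha=\int_H\phi\,d\gamma_\alpha$ gives $\int_H|I_3|\,d\gamma_\alpha\le\int_H|u\,\mbox{\rm div}\,F|\,d\gamma_\alpha=J_3$.

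The term $I_2=\int_0^\epsilon P^\alpha_{\epsilon-\eta}\{\Phi_\eta\}\,d\eta$, where $\Phi_\eta$ denotes the inner expectation divided by $\eta$, is the only genuinely multi-step one. Here I would first apply Minkowski's integral inequality to pull the modulus through the $d\eta$-integral, then use $|P^\alpha_{\epsilon-\eta}\Phi_\eta|\le P^\alpha_{\epsilon-\eta}|\Phi_\eta|$, then Fubini to exchange the $d\gamma_\alpha$- and $d\eta$-integrations, and finally the invariance of $\gamma_\alpha$ to drop $P^\alpha_{\epsilon-\eta}$ inside each $\eta$-slice, so that $\int_H P^\alpha_{\epsilon-\eta}|\Phi_\eta|\,d\gamma_\alpha=\int_H|\Phi_\eta|\,d\gamma_\alpha$. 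Bounding $|\Phi_\eta|\le\frac1\eta\,\E|\,\cdot\,|$ then yields exactly $J_2$.

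The only point requiring care, and the main (though essentially routine) obstacle, is the justification of the Fubini exchange and of applying the invariance identity under the $d\eta$-integral. This reduces to absolute integrability of all the iterated integrals, which I would verify by noting that every integrand is bounded: $u$ is bounded, the coefficients $Ae_h+D_xp_\alpha e_h$ lie in $H$ because $F^h\in\mathcal{F}C^1_b(D(A))$, the derivative flow $\eta^h_\alpha$ is controlled by Lemma \ref{l3.1a}, and $\pi_n(B^{-1})^*(D_xX_\alpha)^*$ is Hilbert--Schmidt, so the stochastic integrals in \eqref{e3.29e} and \eqref{e3.29l} have finite moments uniformly in the relevant variables. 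With these bounds the iterated integrals converge absolutely, Fubini and the invariance step are legitimate, and \eqref{e3.29h} follows.
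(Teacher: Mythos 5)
Your argument is correct and is exactly the route the paper intends: the paper offers no written proof beyond the remark that the corollary ``is a consequence of Lemma \ref{l3.10} taking into account the invariance of $\gamma_\alpha$'', and your term-by-term treatment of $I_1,I_2,I_3$ (triangle inequality, positivity of the Markov semigroup $P^\alpha_t$, invariance of $\gamma_\alpha$, and Fubini justified by boundedness of all integrands for fixed $\alpha$) is precisely the standard elaboration of that remark.
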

 To estimate $\int_H|B_\epsilon(u,F)|\,d\gamma$ we need some preliminary results.
 \begin{Proposition}
\label{prop prima stima}For every $p\in(2,\infty]$ there is a constant
$C_{p}>0$, independent of $\alpha$ and $\epsilon$, such that
\begin{align*}
&  \frac{1}{\epsilon}\int_{H}\mathbb{E}\left[  \left\vert u\left(  t,x\right)
\left\langle F\left(  t,x\right)  -F\left(  t,X_{\alpha}\left(  \epsilon
,x\right)  \right)  ,\int_{0}^{\epsilon}\left(  D_{x}X_{\alpha}\left(
\eta,x\right)  \right)  ^{\ast}\left(  B^{-1}\right)  ^{\ast}dW\left(
\eta\right)  \right\rangle \right\vert \right]  \gamma_{\alpha}\left(
dx\right)  \\
&  \leq C_{A,B,p}\left(  \int_{H}\left\langle \left(  \frac{I-\mathbf{P}%
_{\epsilon}^{\alpha}}{\epsilon}\right)  \left(  -A\right)  ^{\theta_{0}%
}F\left(  t,x\right)  ,\left(  -A\right)  ^{\theta_{0}}F\left(  t,x\right)
\right\rangle _{H}\gamma_{\alpha}\left(  dx\right)  \right)  ^{1/2}\left(
\int_{H}\left\vert u\left(  t,x\right)  \right\vert ^{p}\gamma_{\alpha}\left(
dx\right)  \right)  ^{1/p}%
\end{align*}
where $C_{A,B,p}=C_{p}\left\Vert \left(  -A\right)  ^{-\theta_{0}}\right\Vert
_{L_{2}\left(  H\right)  }\left\Vert B^{-1}\right\Vert _{\mathcal{L}\left(
H\right)  }$ for some constant $C_{p}>0$.
\end{Proposition}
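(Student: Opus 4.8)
The plan is to exploit the self-adjointness of $(-A)^{-\theta_0}$ to move the smoothing factor off of $F$ and onto the stochastic integral, where the Hilbert--Schmidt property of $(-A)^{-\theta_0}$ from Lemma \ref{l3.3j} can be used, and then to turn the resulting increment of $(-A)^{\theta_0}F$ along the flow into the $\mathcal V(H,H,\gamma_\alpha)$-seminorm via stationarity.

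First I would note that $u(t,x)$ is non-random, hence $\mathbb E[|u(t,x)\langle\cdots\rangle|]=|u(t,x)|\,\mathbb E[|\langle\cdots\rangle|]$, and write $F(t,\cdot)=(-A)^{-\theta_0}G(t,\cdot)$ with $G:=(-A)^{\theta_0}F$. Setting $M(x):=\int_0^\epsilon(D_xX_\alpha(\eta,x))^*(B^{-1})^*dW(\eta)$ and using that $(-A)^{-\theta_0}$ is self-adjoint, the scalar product becomes $\langle G(t,x)-G(t,X_\alpha(\epsilon,x)),(-A)^{-\theta_0}M(x)\rangle$. The Cauchy--Schwarz inequality in $\omega$ then bounds its expected modulus by $\bigl(\mathbb E|G(t,x)-G(t,X_\alpha(\epsilon,x))|^2\bigr)^{1/2}$ times $\bigl(\mathbb E|(-A)^{-\theta_0}M(x)|^2\bigr)^{1/2}$.

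Next I would estimate the second factor by the It\^o isometry, $\mathbb E|(-A)^{-\theta_0}M(x)|^2=\mathbb E\int_0^\epsilon\|(-A)^{-\theta_0}(D_xX_\alpha(\eta,x))^*(B^{-1})^*\|_{L_2(H)}^2\,d\eta$. Using the ideal property $\|T_1T_2T_3\|_{L_2}\le\|T_1\|_{L_2}\|T_2\|_{\mathcal L}\|T_3\|_{\mathcal L}$ together with Lemma \ref{l3.3j}, Lemma \ref{l3.1a} (which gives $\|D_xX_\alpha(\eta,x)\|_{\mathcal L(H)}\le1$), and $\|(B^{-1})^*\|_{\mathcal L}=\|B^{-1}\|_{\mathcal L}$, this integrand is bounded by $\|(-A)^{-\theta_0}\|_{L_2(H)}^2\|B^{-1}\|_{\mathcal L(H)}^2$, so the factor is at most $\sqrt\epsilon\,\|(-A)^{-\theta_0}\|_{L_2}\|B^{-1}\|_{\mathcal L}$. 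This is exactly where $\theta_0>\tfrac14$ enters: once $(-A)^{-\theta_0}$ is absorbed into the integrand the latter is genuinely Hilbert--Schmidt, which both legitimizes the $\pi_n$-free stochastic integral and produces the constant $\|(-A)^{-\theta_0}\|_{L_2}$.

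Finally I would integrate in $x$ against $\gamma_\alpha$ and apply H\"older with exponents $p$ and $p'=p/(p-1)$ to extract $\bigl(\int_H|u|^p\,\gamma_\alpha\bigr)^{1/p}$, leaving $\bigl(\int_H(\mathbb E|G(t,\cdot)-G(t,X_\alpha(\epsilon,\cdot))|^2)^{p'/2}\gamma_\alpha\bigr)^{1/p'}$. Since $p>2$ forces $p'/2\le1$, Jensen's inequality (concavity of $s\mapsto s^{p'/2}$, $\gamma_\alpha$ being a probability measure) bounds this by $\bigl(\int_H\mathbb E|G(t,\cdot)-G(t,X_\alpha(\epsilon,\cdot))|^2\,\gamma_\alpha\bigr)^{1/2}$; this is why the range $p\in(2,\infty]$ is the natural one. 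Invariance of $\gamma_\alpha$ then gives the identity $\int_H\mathbb E|G-G\circ X_\alpha(\epsilon,\cdot)|^2\,\gamma_\alpha=2\int_H\langle(I-\mathbf P_\epsilon^\alpha)G,G\rangle\,\gamma_\alpha$. The only genuinely delicate point is the $\epsilon$-bookkeeping: the prefactor $\epsilon^{-1}$ and the isometry factor $\epsilon^{1/2}$ combine to $\epsilon^{-1/2}$, which cancels against the $\epsilon^{1/2}$ produced when rewriting $2\int\langle(I-\mathbf P_\epsilon^\alpha)G,G\rangle$ as $2\epsilon\int\langle\tfrac{I-\mathbf P_\epsilon^\alpha}{\epsilon}G,G\rangle$, so that all powers of $\epsilon$ disappear and the bound holds with $C_p=\sqrt2$, independent of $\alpha$ and $\epsilon$; everything else is Cauchy--Schwarz, H\"older, Jensen and stationarity.
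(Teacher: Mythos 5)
Your argument is correct and rests on the same two key ideas as the paper's proof: writing $F=(-A)^{-\theta_0}\bigl((-A)^{\theta_0}F\bigr)$ and moving $(-A)^{-\theta_0}$ onto the stochastic integral so that Lemma \ref{l3.3j} and the contraction bound $\Vert D_xX_\alpha(\eta,x)\Vert_{\mathcal L(H)}\le 1$ apply, and then converting $\int_H\mathbb E\bigl[|G(t,x)-G(t,X_\alpha(\epsilon,x))|_H^2\bigr]\gamma_\alpha(dx)$ into $2\int_H\langle (I-\mathbf P^\alpha_\epsilon)G,G\rangle_H\,\gamma_\alpha(dx)$ via the invariance of $\gamma_\alpha$. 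The one place you genuinely diverge is the integrability bookkeeping: the paper applies a single three-exponent H\"older inequality over $H\times\Omega$ with exponents $2$, $p$ and $r(p)$ satisfying $\tfrac12+\tfrac1p+\tfrac1{r(p)}=1$, and therefore needs the Burkholder--Davis--Gundy inequality for the $r(p)$-th moment of the stochastic integral, which is where its $p$-dependent constant $C_p$ originates; you instead do Cauchy--Schwarz in $\omega$ pointwise in $x$ (so only the It\^o isometry is needed), then H\"older in $x$ with exponents $p,p'$, then Jensen for the concave map $s\mapsto s^{p'/2}$ using that $\gamma_\alpha$ is a probability measure. Your version is slightly more elementary, yields the explicit $p$-independent constant $\sqrt2$, and would in fact also cover $p=2$; in both routes $p>2$ enters only to make the exponents close. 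Your observation that absorbing $(-A)^{-\theta_0}$ into the integrand is what legitimizes the $\pi_n$-free stochastic integral appearing in the statement is also the correct reading.
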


\begin{proof}
Call $I$ the integral we have to estimate. To shorten the notations, call
$I^{\prime}$ the stochastic integral%
\[
I^{\prime}:=\int_{0}^{\epsilon}\left(  -A\right)  ^{-\theta_{0}}\left(
D_{x}X_{\alpha}\left(  \eta,x\right)  \right)  ^{\ast}\left(  B^{-1}\right)
^{\ast}dW\left(  \eta\right)  .
\]
We have%
\begin{align*}
I &  =\frac{1}{\epsilon}\int_{H}\mathbb{E}\left[  u\left(  t,x\right)
\left\langle \left(  -A\right)  ^{\theta_{0}}F\left(  t,x\right)  -\left(
-A\right)  ^{1/2}F\left(  t,X_{\alpha}\left(  \epsilon,x\right)  \right)
,I^{\prime}\right\rangle \right]  \gamma_{\alpha}\left(  dx\right)  \\
&  \leq\frac{1}{\epsilon}\left(  \int_{H}\mathbb{E}\left[  \left\Vert \left(
-A\right)  ^{\theta_{0}}F\left(  t,x\right)  -\left(  -A\right)  ^{\theta_{0}%
}F\left(  t,X_{\alpha}\left(  \epsilon,x\right)  \right)  \right\Vert _{H}%
^{2}\right]  \gamma_{\alpha}\left(  dx\right)  \right)  ^{1/2}\\
&  \cdot\left(  \int_{H}\left\vert u\left(  t,x\right)  \right\vert ^{p}%
\gamma_{\alpha}\left(  dx\right)  \right)  ^{1/p}\left(  \int_{H}%
\mathbb{E}\left[  \left\Vert I^{\prime}\right\Vert _{H}^{r\left(  p\right)
}\right]  \gamma_{\alpha}\left(  dx\right)  \right)  ^{1/r\left(  p\right)  }%
\end{align*}
with $\frac{1}{p}+\frac{1}{2}+\frac{1}{r\left(  p\right)  }=1$ namely
$r\left(  p\right)  =\frac{p-2}{2p}$ and in particular with the condition
\[
p\in(2,\infty].
\]
By the Burkholder-Davies--Gundy inequality,
\begin{align*}
&  \mathbb{E}\left[  \left| I^{\prime}\right| _{H}^{r\left(  p\right)
}\right]  \leq C_{p}\mathbb{E}\left[  \left(  \int_{0}^{\epsilon}\left\Vert
\left(  -A\right)  ^{-\theta_{0}}\left(  D_{x}X_{\alpha}\left(  \eta,x\right)
\right)  ^{\ast}\left(  B^{-1}\right)  ^{\ast}\right\Vert _{L_{2}\left(
H\right)  }^{2}d\eta\right)  ^{r\left(  p\right)  /2}\right]  \\
&  \leq C_{p}\left\Vert \left(  -A\right)  ^{-\theta_{0}}\right\Vert
_{L_{2}\left(  H\right)  }^{r\left(  p\right)  }\left\Vert B^{-1}\right\Vert
_{\mathcal{L}\left(  H\right)  }^{r\left(  p\right)  }\mathbb{E}\left[
\left(  \int_{0}^{\epsilon}\left\Vert D_{x}X_{\alpha}\left(  \eta,x\right)
\right\Vert _{\mathcal{L}\left(  H\right)  }^{2}d\eta\right)  ^{r\left(
p\right)  /2}\right]  \\
&  \leq C_{p}\left\Vert \left(  -A\right)  ^{-\theta_{0}}\right\Vert
_{L_{2}\left(  H\right)  }^{r\left(  p\right)  }\left\Vert B^{-1}\right\Vert
_{\mathcal{L}\left(  H\right)  }^{r\left(  p\right)  }\left(  \sqrt{\epsilon
}\right)  ^{r\left(  p\right)  }%
\end{align*}
because, by dissipativity of the reaction diffusion system,
\[
\left\Vert D_{x}X_{\alpha}\left(  \eta,x\right)  \right\Vert _{\mathcal{L}%
\left(  H\right)  }\leq1.
\]
Therefore%
\[
I\leq\frac{C}{\sqrt{\epsilon}}\left(  \int_{H}\mathbb{E}\left[  \left|
\left(  -A\right)  ^{\theta_{0}}F\left(  t,x\right)  -\left(  -A\right)
^{\theta_{0}}F\left(  t,X_{\alpha}\left(  \epsilon,x\right)  \right)
\right |_{H}^{2}\right]  \gamma_{\alpha}\left(  dx\right)  \right)
^{1/2}\left(  \int_{H}\left\vert u\left(  t,x\right)  \right\vert ^{p}%
\gamma_{\alpha}\left(  dx\right)  \right)  ^{1/p}%
\]
where $C=C_{p}^{1/r\left(  p\right)  }\left\Vert \left(  -A\right)
^{-\theta_{0}}\right\Vert _{L_{2}\left(  H\right)  }\left\Vert B^{-1}%
\right\Vert _{\mathcal{L}\left(  H\right)  }$. Finally, writing $G\left(
t,x\right)  =\left(  -A\right)  ^{\theta_{0}}F\left(  t,x\right)  $,%
\begin{align*}
&  \int_{H}\mathbb{E}\left[  \left| \left(  -A\right)  ^{1/2}F\left(
t,x\right)  -\left(  -A\right)  ^{1/2}F\left(  t,X_{\alpha}\left(
\epsilon,x\right)  \right)  \right|_{H}^{2}\right]  \gamma_{\alpha
}\left(  dx\right)  \\
&  =\int_{H}\left(  \left| G\left(  t,x\right)  \right| _{H}%
^{2}-2\mathbb{E}\left[  \left\langle G\left(  t,x\right)  ,G\left(
t,X_{\alpha}\left(  \epsilon,x\right)  \right)  \right\rangle _{H}\right]
+\mathbb{E}\left[  \left| G\left(  t,X_{\alpha}\left(  \epsilon,x\right)
\right)  \right| _{H}^{2}\right]  \right)  \gamma_{\alpha}\left(
dx\right)  .
\end{align*}
Now%
\[
\mathbb{E}\left[  \left\langle G\left(  t,x\right)  ,G\left(  t,X_{\alpha
}\left(  \epsilon,x\right)  \right)  \right\rangle _{H}\right]  =\left\langle
G\left(  t,x\right)  ,\mathbb{E}\left[  G\left(  t,X_{\alpha}\left(
\epsilon,x\right)  \right)  \right]  \right\rangle _{H}=\left\langle G\left(
t,x\right)  ,\left(  \mathbf{P}_{\epsilon}^{\alpha}G\left(  t,\cdot\right)
\right)  \left(  x\right)  \right\rangle _{H}%
\]%
\begin{align*}
\int_{H}\mathbb{E}\left[  \left| G\left(  t,X_{\alpha}\left(
\epsilon,x\right)  \right)  \right|_{H}^{2}\right]  \gamma_{\alpha
}\left(  dx\right)   &  =\int_{H}\left(  P_{\epsilon}^{\alpha}\left|
G\left(  t,\cdot\right)  \right|_{H}^{2}\right)  \left(  x\right)
\gamma_{\alpha}\left(  dx\right)  \\
&  =\int_{H}\left| G\left(  t,x\right)  \right | _{H}^{2}\gamma
_{\alpha}\left(  dx\right)
\end{align*}
because $\gamma_{\alpha}$ is invariant for $P_{\epsilon}^{\alpha}$, hence%
\begin{align*}
&  \int_{H}\mathbb{E}\left[  \left|\left(  -A\right)  ^{1/2}F\left(
t,x\right)  -\left(  -A\right)  ^{1/2}F\left(  t,X_{\alpha}\left(
\epsilon,x\right)  \right)  \right|_{H}^{2}\right]  \gamma_{\alpha
}\left(  dx\right)  \\
&  =2\int_{H}\left(  \left|G\left(  t,x\right)  \right|_{H}%
^{2}-\left\langle G\left(  t,x\right)  ,\left(  \mathbf{P}_{\epsilon}^{\alpha
}G\left(  t,\cdot\right)  \right)  \left(  x\right)  \right\rangle
_{H}\right)  \gamma_{\alpha}\left(  dx\right)  \\
&  =2\int_{H}\left\langle G\left(  t,x\right)  ,G\left(  t,x\right)  -\left(
\mathbf{P}_{\epsilon}^{\alpha}G\left(  t,\cdot\right)  \right)  \left(
x\right)  \right\rangle _{H}\gamma_{\alpha}\left(  dx\right)  .
\end{align*}
Collecting these facts, we have proved the proposition.
\qed
\end{proof}

 \begin{Proposition}
\label{prop B alpha}Under the assumptions of Theorem \ref{tmain} 
there exist constants $C_{A,B,p}$ (given by Proposition \ref{prop prima stima}%
) and $C_{A,B,p,q,\theta}$, both independent of $\alpha$ and $\epsilon$, such
that
\begin{align*}
&  \int_{H}\left\vert B_{\epsilon}^{\alpha}\left(  u,F\right)  \left(
t,x\right)  \right\vert \gamma_{\alpha}\left(  dx\right)  \\
&  \leq C_{A,B,p}\left\Vert u\left(  t,\cdot\right)  \right\Vert
_{L^{p}\left(  H,\gamma_{\alpha}\right)  }\left(  \int_{H}\left\langle \left(
\frac{I-\mathbf{P}_{\epsilon}^{\alpha}}{\epsilon}\right)  \left(  -A\right)
^{\theta_{0}}F\left(  t,x\right)  ,\left(  -A\right)  ^{\theta_{0}}F\left(
t,x\right)  \right\rangle _{H}\gamma_{\alpha}\left(  dx\right)  \right)
^{1/2}\\
&  +C_{A,B,p,q,\theta}\left\Vert u\left(  t,\cdot\right)  \right\Vert
_{L^{p}\left(  H,\gamma_{\alpha}\right)  }\left\Vert \left(  -A\right)
^{1/2+\theta}F\left(  t,\cdot\right)  \right\Vert _{L^{q}\left(
H,\gamma_{\alpha}\right)  }\\
&  +\left\Vert u\left(  t,\cdot\right)  \right\Vert _{L^{p}\left(
H,\gamma_{\alpha}\right)  }\left\Vert \operatorname{div}F\left(
t,\cdot\right)  \right\Vert _{L^{\frac{p}{p-1}}\left(  H,\gamma_{\alpha
}\right)  }%
\end{align*}
for all functions $u\in\mathcal{F}C_{b,T}^{1}(H)$ and vector field $F$ of the
form $F=\sum_{h=1}^{n}F_{h}e_{h}$, with $F_{h}\in\mathcal{F}C_{b,T}^{2}(H)$ for
all $h=1,...,n$.
\end{Proposition}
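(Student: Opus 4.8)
The plan is to begin from the decomposition of Corollary \ref{c3.10}, which bounds $\int_H|B^\alpha_\epsilon(u,F)|\,d\gamma_\alpha$ by $J_1+J_2+J_3$, the three terms produced by the Bismut--Elworthy--Li representation of Lemma \ref{l3.10}, and to match $J_1,J_2,J_3$ to the three summands on the right-hand side. Two devices will be used throughout. First, the invariance of $\gamma_\alpha$ under $P^\alpha_t$ (Proposition \ref{p4.16}): inside any $L^r(\gamma_\alpha)$-norm it lets me replace a composition such as $u(t,X_\alpha(s,\cdot))$ or $(-A)^{1/2+\theta}F(t,X_\alpha(s,\cdot))$ by $u(t,\cdot)$ or $(-A)^{1/2+\theta}F(t,\cdot)$, since $\int_H\E\,\psi(X_\alpha(s,x))\,\gamma_\alpha(dx)=\int_H\psi\,d\gamma_\alpha$. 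Second, H\"older's inequality on $(H\times\Omega,\gamma_\alpha\otimes\P)$, always isolating the factor $u$ with exponent $p$, a factor built from $F$ with exponent $2$ (for $J_1$) or $q$ (for $J_2$), and a stochastic-integral factor whose exponent is fixed by $\tfrac1p+\tfrac12+\tfrac1{r(p)}=1$ or $\tfrac1p+\tfrac1q+\tfrac1{r'}=1$. The hypothesis $\tfrac1p+\tfrac1q<1$ of Theorem \ref{tmain} is exactly what makes the latter triple H\"older admissible, with $r'\in(1,\infty)$.

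The term $J_3=\int_H|u\,\operatorname{div}F|\,d\gamma_\alpha$ is immediate: H\"older with exponents $p$ and $\tfrac{p}{p-1}$ yields $\|u(t,\cdot)\|_{L^p(\gamma_\alpha)}\|\operatorname{div}F(t,\cdot)\|_{L^{p/(p-1)}(\gamma_\alpha)}$, the third summand. For $J_1$ I essentially invoke Proposition \ref{prop prima stima}. The only discrepancies are that $J_1$ carries $u(t,X_\alpha(\epsilon,x))$ instead of $u(t,x)$ and the projector $\pi_n$; both are harmless. Keeping $u(t,X_\alpha(\epsilon,x))$ under the expectation and applying the triple H\"older with exponents $p,2,r(p)$, the $u$-factor becomes $\big(\int_H\E|u(t,X_\alpha(\epsilon,x))|^p\,\gamma_\alpha(dx)\big)^{1/p}=\|u(t,\cdot)\|_{L^p(\gamma_\alpha)}$ by invariance, while $\|\pi_n(B^{-1})^*\|_{\mathcal L(H)}\le\|B^{-1}\|_{\mathcal L(H)}$ and $\|D_xX_\alpha(\lambda,x)\|_{\mathcal L(H)}\le1$ (Lemma \ref{l3.1a}) make the remaining estimate identical to that of Proposition \ref{prop prima stima}. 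In particular the powers of $\epsilon$ cancel, the prefactor $\tfrac1\epsilon$ being absorbed by the $\sqrt\epsilon$ coming from the $\mathbf{P}^\alpha_\epsilon$-difference (via the identity at the end of Proposition \ref{prop prima stima}) and the $\sqrt\epsilon$ from the Burkholder--Davis--Gundy bound on the stochastic integral. This delivers the first summand, with its $\mathcal V(H,H,\gamma_\alpha)$-type factor involving $(-A)^{\theta_0}F$.

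The core of the proof is $J_2$, the only term carrying the unbounded operator $A$. After estimating the inner stochastic integral by the Burkholder--Davis--Gundy inequality and It\^o's isometry, I must extract the factor $(-A)^{1/2+\theta}F$ and leave behind an operator whose Hilbert--Schmidt norm is integrable in time. The algebraic step is to factor $A=-(-A)^{1/2-\theta}(-A)^{1/2+\theta}$ and transfer $(-A)^{1/2+\theta}$ onto $F$ using self-adjointness of the fractional powers; this is legitimate precisely because $F(t,\cdot)$ takes values in $D((-A)^{1/2+\theta})$, and it produces the factor $(-A)^{1/2+\theta}F(t,X_\alpha(\eta,x))$, estimated in $L^q(\gamma_\alpha)$ by invariance after the triple H\"older with exponents $p,q,r'$. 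The residual integrand is then $(-A)^{1/2-\theta}(D_xX_\alpha(\lambda,x))^*\pi_n(B^{-1})^*$, and its time-integrated squared Hilbert--Schmidt norm must be bounded uniformly in $\alpha,\epsilon$ and $n$. I would obtain this from the parabolic smoothing of the derivative flow: since $p$ is dissipative, $D_xX_\alpha(\lambda,x)^*$ is dominated by $e^{\lambda A}$, so that $\int_0^\eta\|(-A)^{1/2-\theta}(D_xX_\alpha(\lambda,x))^*\|_{L_2(H)}^2\,d\lambda\lesssim\sum_{j\in\N}(\pi^2j^2)^{-2\theta}=\pi^{-4\theta}\sum_{j\in\N}j^{-4\theta}<\infty$, the series converging exactly because $\theta>\theta_0>\tfrac14$, the same threshold that makes $(-A)^{-\theta_0}$ Hilbert--Schmidt in Lemma \ref{l3.3j}. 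Multiplying by $\tfrac1\eta$ and integrating $\int_0^\epsilon d\eta$ again cancels all powers of $\epsilon$, and, after removing the outer $P^\alpha_{\epsilon-\eta}$ by invariance of $\gamma_\alpha$, one arrives at $C_{A,B,p,q,\theta}\,\|u(t,\cdot)\|_{L^p(\gamma_\alpha)}\|(-A)^{1/2+\theta}F(t,\cdot)\|_{L^q(\gamma_\alpha)}$, the second summand; since every operator norm used is bounded by $1$ and $\pi_n$ has norm $1$, the constants are independent of $n$.

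The main obstacle is precisely this smoothing bound for the derivative flow, uniformly in the Yosida parameter $\alpha$, and in particular the treatment of $D_xp_\alpha(X_\alpha)=p'_\alpha(X_\alpha)$, whose negative part need not stay bounded as $\alpha\to 0$. I expect to handle the $A$-part and the $D_xp_\alpha$-part together, viewing $A+D_xp_\alpha(X_\alpha)$ as the (random, time-dependent) generator of the derivative flow $\lambda\mapsto D_xX_\alpha(\lambda,x)$ and exploiting that the extra term $p'_\alpha\le 0$ is dissipative and only improves the contraction estimate of Lemma \ref{l3.1a}; thus the combined flow inherits the $(-A)$-smoothing above with constants independent of $\alpha$. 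The moment bounds of Proposition \ref{p4.16} then ensure that all $\gamma_\alpha$-integrals appearing are finite and controlled uniformly in $\alpha$.
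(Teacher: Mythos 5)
Your overall architecture coincides with the paper's: the decomposition into $J_1+J_2+J_3$ from Corollary \ref{c3.10}, the treatment of $J_1$ by Proposition \ref{prop prima stima}, the trivial H\"older bound for $J_3$, and the triple H\"older with exponents $p,q,r$ (admissible because $\tfrac1p+\tfrac1q<1$) together with invariance of $\gamma_\alpha$ for $J_2$. The gap is exactly in the step you flag as the main obstacle, and the dissipativity remark does not close it. Your claimed bound $\int_0^\eta\|(-A)^{1/2-\theta}(D_xX_\alpha(\lambda,x))^*\|_{L_2(H)}^2\,d\lambda\lesssim\sum_j(\pi^2j^2)^{-2\theta}$ fails in two ways. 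First, it is independent of $\eta$: the Bismut--Elworthy--Li representation carries the factor $\tfrac1\eta\,d\eta$, so an $O(1)$ bound on the $L^r$-norm of the stochastic integral leaves you with $\int_0^\epsilon\tfrac{d\eta}{\eta}=\infty$. A positive power of $\eta$ is indispensable; the paper obtains $\eta^{\theta-\theta_0}$ from \cite[Corollary 2.3]{DaDe17}, which makes $\int_0^\epsilon\eta^{(\theta-\theta_0)/2-1}\,d\eta$ finite and, crucially for Theorem \ref{trank}, vanishing as $\epsilon\to0$. Second, the assertion that $D_xX_\alpha(\lambda,x)$ is ``dominated by $e^{\lambda A}$'' in this weighted Hilbert--Schmidt sense does not follow from dissipativity: Lemma \ref{l3.1a} controls only the $\mathcal L(H)$-norm, and since the multiplication operator $D_xp_\alpha(X_\alpha(s,x))$ does not commute with $A$, any Duhamel argument for the smoothing of the derivative flow produces a factor involving $\sup_{t\le T}\|D_xp_\alpha(X_\alpha(t,x))\|_\infty$. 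This is precisely the random variable $\Delta_T(x)$ in the paper's Step 2, which must then be integrated against $\gamma_\alpha$ via $\E[\Delta_T(x)^r]\le C+C|x|_H^{r(N-1)}$ and the moment bounds \eqref{e4.11}; none of this appears in your sketch, and your constant is claimed to be deterministic when it cannot be.

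A second omission concerns the factor $A+D_xp_\alpha(x)$, which in Corollary \ref{c3.10} sits explicitly as a multiplier inside the stochastic integral and hence cannot be ``viewed together with the flow'' as you suggest. After moving $(-A)^{1/2+\theta}$ onto $F$ one is left with $(-A)^{-1/2-\theta}(A+D_xp_\alpha(x))^*(-A)^{-1/2+\theta}$: the $A$-part of this is the identity, but the $D_xp_\alpha$-part has operator norm of order $\|x\|_\infty^{N-1}$, not $1$, so your statement that ``every operator norm used is bounded by $1$'' is false, and a further application of the invariant-measure moment estimates (the paper's function $g(x)$) is required. In short, the skeleton of your proof matches the paper, but the quantitative heart of the $J_2$ estimate --- the $\eta^{\theta-\theta_0}$ smoothing of the derivative flow with its random prefactor, and the moment bounds that tame both that prefactor and the $D_xp_\alpha$ multiplier --- is missing, and the version you substitute for it is not merely unproved but, as stated, leads to a divergent $d\eta/\eta$ integral.
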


\begin{proof}
\textbf{Step 1}. We know%
\[
\int_{H}\left\vert B_{\epsilon}^{\alpha}\left(  u,F\right)  \left(
t,x\right)  \right\vert \gamma_{\alpha}\left(  dx\right)  \leq J_{1}%
+J_{2}+J_{3}%
\]
where%
\[
J_{1}=\frac{1}{\epsilon}\int_{H}\mathbb{E}\left[  \left\vert u\left(
t,x\right)  \left\langle F\left(  t,x\right)  -F\left(  t,X_{\alpha}\left(
\epsilon,x\right)  \right)  ,\int_{0}^{\epsilon}\left(  D_{x}X_{\alpha}\left(
\eta,x\right)  \right)  ^{\ast}\left(  B^{-1}\right)  ^{\ast}dW\left(
\eta\right)  \right\rangle \right\vert \right]  \gamma_{\alpha}\left(
dx\right)
\]%
\[
J_{2}=\int_{H}\int_{0}^{\epsilon}\frac{1}{\eta}\mathbb{E}\left[  \left\vert
u\left(  t,X_{\alpha}\left(  \eta,x\right)  \right)  \left\langle F\left(
t,X_{\alpha}\left(  \eta,x\right)  \right)  ,J_{2}^{\prime}\right\rangle
\right\vert \right]  d\eta d\gamma_{\alpha}\left(  x\right)
\]%
\[
J_{3}=\int_{H}u\left(  t,x\right)  \operatorname{div}F\left(  t,x\right)
\gamma_{\alpha}\left(  dx\right)  .
\]
where for shortness we wrote%
\[
J_{2}^{\prime}=\int_{0}^{\eta}\left(  A+D_{x}p_{\alpha}\left(  x\right)
\right)  ^{\ast}\left(  D_{x}X_{\alpha}\left(  \lambda,x\right)  \right)
^{\ast}\left(  B^{-1}\right)  ^{\ast}dW\left(  \lambda\right)  .
\]
The estimate for $J_{1}$ has been made above and the estimate for $J_{3}$ is
trivial. We need only to estimate $J_{2}$. Let $r>0$ be such that
\[
\frac{1}{p}+\frac{1}{q}+\frac{1}{r}=1.
\]
Then%
\begin{align*}
J_{2} &  \leq\int_{0}^{\epsilon}\frac{1}{\eta}d\eta\left(  \int_{H}%
\mathbb{E}\left[  \left\vert u\left(  t,X_{\alpha}\left(  \eta,x\right)
\right)  \right\vert ^{p}\right]  \gamma_{\alpha}\left(  dx\right)  \right)
^{1/p}\\
&  \cdot\left(  \int_{H}\mathbb{E}\left[  \left| \left(  -A\right)
^{1/2+\theta}F\left(  t,X_{\alpha}\left(  \eta,x\right)  \right)  \right|_{H}^{q}\right]  \gamma_{\alpha}\left(  dx\right)  \right)  ^{1/q}\left(
\int_{H}\mathbb{E}\left[  \left|J_{2}^{\prime}\right| _{H}%
^{r}\right]  \gamma_{\alpha}\left(  dx\right)  \right)  ^{1/r}%
\end{align*}%
\begin{align*}
&  \leq\int_{0}^{\epsilon}\frac{1}{\eta}d\eta\left(  \int_{H}\left(  P_{\eta
}^{\alpha}\left(  \left\vert u\left(  t,\cdot\right)  \right\vert ^{p}\right)
\right)  \left(  x\right)  \gamma_{\alpha}\left(  dx\right)  \right)  ^{1/p}\\
&  \cdot\left(  \int_{H}\left(  P_{\eta}^{\alpha}\left(  \left| \left(
-A\right)  ^{1/2+\theta}F\left(  t,\cdot\right)  \right| _{H}^{q}\right)
\right)  \left(  x\right)  \gamma_{\alpha}\left(  dx\right)  \right)  ^{1/q}\\
&  \cdot\left(  \int_{H}\mathbb{E}\left[  \left(  \int_{0}^{\eta}\left\Vert
\left(  -A\right)  ^{-1/2-\theta}\left(  A+D_{x}p_{\alpha}\left(  x\right)
\right)  ^{\ast}\left(  D_{x}X_{\alpha}\left(  \lambda,x\right)  \right)
^{\ast}\left(  B^{-1}\right)  ^{\ast}\right\Vert _{L_{2}\left(  H\right)
}^{2}d\lambda\right)  ^{r/2}\right]  \gamma_{\alpha}\left(  dx\right)
\right)  ^{1/r}%
\end{align*}
and using invariance of $\gamma_{\alpha}$ for $P_{\eta}^{\alpha}$ and the fact
that $B^{-1}$ is bounded,
\[
J_2 \leq\left\Vert B^{-1}\right\Vert _{\mathcal{L}\left(  H\right)  }C\left(
\epsilon,\theta,r\right)  \left\Vert u\left(  t,\cdot\right)  \right\Vert
_{L^{p}\left(  H,\gamma_{\alpha}\right)  }\left\Vert \left(  -A\right)
^{1/2+\theta}F\left(  t,\cdot\right)  \right\Vert _{L^{q}\left(
H,\gamma_{\alpha}\right)  }%
\]
where $C\left(  \epsilon,\theta,r\right)$ and $g(x)$ are given respectively by:
\begin{align*}
&  \int_{0}^{\epsilon}\frac{1}{\eta}d\eta\left(  \int_{H}\mathbb{E}\left[
\left(  \int_{0}^{\eta}\left\Vert \left(  -A\right)  ^{-1/2-\theta}\left(
A+D_{x}p_{\alpha}\left(  x\right)  \right)  ^{\ast}\left(  D_{x}X_{\alpha
}\left(  \lambda,x\right)  \right)  ^{\ast}\right\Vert _{L_{2}\left(
H\right)  }^{2}d\lambda\right)  ^{r/2}\right]  \gamma_{\alpha}\left(
dx\right)  \right)  ^{1/r}\\
&  \leq\int_{0}^{\epsilon}\frac{1}{\eta}d\eta\left(  \int_{H}\mathbb{E}\left[
\left(  \int_{0}^{\eta}\left\Vert \left(  -A\right)  ^{1/2-\theta}\left(
D_{x}X_{\alpha}\left(  \lambda,x\right)  \right)  ^{\ast}\right\Vert
_{L_{2}\left(  H\right)  }^{2}d\lambda\right)  ^{r/2}\right]  g\left(
x\right)  \gamma_{\alpha}\left(  dx\right)  \right)  ^{1/r}%
\end{align*}%
\[
g\left(  x\right)  :=\left\Vert \left(  -A\right)  ^{-1/2-\theta}\left(
A+D_{x}p_{\alpha}\left(  x\right)  \right)  ^{\ast}\left(  -A\right)
^{-1/2+\theta}\right\Vert _{\mathcal{L}\left(  H\right)  }^{r}.
\]
It remains to estimate $C\left(  \epsilon,\theta,r,\theta\right)  $ (which a
priori may be infinite).

\textbf{Step 2}. From \cite[Corollary 2.3]{DaDe17}, we have, for $\delta\in\left(
0,1-\alpha\right)  $,%
\[
\int_{0}^{\eta}\left| \left(  -A\right)  ^{\left(  1-\alpha-\delta\right)
/2}D_{x}X_{\alpha}\left(  t,x\right)  h\right|_{H}^{2}dt\leq C\left(
T\right)  \Delta_{T}\left(  x\right)  \eta^{\delta}\left\Vert h\right\Vert
_{D\left(  \left(  -A\right)  ^{-\alpha/2}\right)  }^{2}%
\]
where%
\[
\Delta_{T}\left(  x\right)  =1+\sup_{t\in\left[  0,T\right]  }\left\Vert
D_{x}p_{\alpha}\left(  X_{\alpha}\left(  t,x\right)  \right)  \right\Vert
_{\infty}^{2}%
\]
(it is a random variable). In particular, choosing $\delta$ very small and
$\alpha=1-2\delta<1-\delta$, since the $H$ norm is bounded by any $D\left(
\left(  -A\right)  ^{\varepsilon}\right)  $-norm for $\varepsilon>0$, we get
\[
\int_{0}^{\eta}\left| D_{x}X_{\alpha}\left(  t,x\right)  h\right|
_{H}^{2}dt\leq C\left(  T\right)  \Delta_{T}\left(  x\right)  \eta^{\delta
}\left| h\right| _{D\left(  \left(  -A\right)  ^{-1/2+\delta}\right)
}^{2}.
\]
Hence, for $\delta=\theta-\theta_{0}$ (all constants denoted by $C,C\left(
T\right)  $ below, different from line to line, may depend on $T$ but not on
$\alpha$),
\begin{align*}
&  \int_{0}^{\eta}\left\Vert \left(  -A\right)  ^{1/2-\theta}\left(
D_{x}X_{\alpha}\left(  \lambda,x\right)  \right)  ^{\ast}\right\Vert
_{L_{2}\left(  H\right)  }^{2}d\lambda\\
&  =\int_{0}^{\eta}\left\Vert D_{x}X_{\alpha}\left(  \lambda,x\right)  \left(
-A\right)  ^{1/2-\theta}\right\Vert _{L_{2}\left(  H\right)  }^{2}d\lambda\\
&  =\sum_{k}\int_{0}^{\eta}\left|D_{x}X_{\alpha}\left(  \lambda,x\right)
\left(  -A\right)  ^{1/2-\theta}e_{k}\right|_{H}^{2}d\lambda\\
&  \leq C\left(  T\right)  \Delta_{T}\left(  x\right)  \eta^{2\left(
\theta-\theta_{0}\right)  }\sum_{k}\left|\left(  -A\right)  ^{1/2-\theta
}e_{k}\right|_{D\left(  \left(  -A\right)  ^{-1/2+\left(  \theta
-\theta_{0}\right)  }\right)  }^{2}\\
&  =C\left(  T\right)  \Delta_{T}\left(  x\right)  \eta^{\theta-\theta_{0}%
}\sum_{k}\left|\left(  -A\right)  ^{-\theta_{0}}e_{k}\right| _{H}%
^{2}\\
&  =C\left(  T\right)  \Delta_{T}\left(  x\right)  \eta^{\theta-\theta_{0}%
}\left\Vert \left(  -A\right)  ^{-\theta_{0}}\right\Vert _{L_{2}\left(
H\right)  }^{2}.
\end{align*}
Hence%
\begin{align*}
C\left(  \epsilon,\theta,r\right)   &  \leq\int_{0}^{\epsilon}\frac{1}{\eta
}d\eta\left(  \int_{H}\mathbb{E}\left[  \left(  C\left(  T\right)  \Delta
_{T}\left(  x\right)  \eta^{\theta-\theta_{0}}\left\Vert \left(  -A\right)
^{-\theta_{0}}\right\Vert _{L_{2}\left(  H\right)  }^{2}\right)
^{r/2}\right]  g\left(  x\right)  \gamma_{\alpha}\left(  dx\right)  \right)
^{1/r}\\
&  =C\left(  T\right)  ^{1/2}\left\Vert \left(  -A\right)  ^{-\theta_{0}%
}\right\Vert _{L_{2}\left(  H\right)  }\int_{0}^{\epsilon}\frac{\eta^{r\left(
\theta-\theta_{0}\right)  /2}}{\eta}d\eta\left(  \int_{H}\mathbb{E}\left[
\Delta_{T}\left(  x\right)  ^{r/2}\right]  g\left(  x\right)  \gamma_{\alpha
}\left(  dx\right)  \right)  ^{1/r}%
\end{align*}
It remains to bound
\begin{align*}
&  \int_{H}\mathbb{E}\left[  \Delta_{T}\left(  x\right)  ^{r/2}\right]
g\left(  x\right)  \gamma_{\alpha}\left(  dx\right)  \\
&  =\int_{H}\mathbb{E}\left[  \Delta_{T}\left(  x\right)  ^{r/2}\right]
\left\Vert \left(  -A\right)  ^{-1/2-\theta}\left(  A+D_{x}p_{\alpha}\left(
x\right)  \right)  ^{\ast}\left(  -A\right)  ^{-1/2+\theta}\right\Vert
_{\mathcal{L}\left(  H\right)  }^{r}\gamma_{\alpha}\left(  dx\right)  \\
&  \leq C\int_{H}\mathbb{E}\left[  \Delta_{T}\left(  x\right)  ^{r/2}\right]
\left(  1+\left\Vert \left(  -A\right)  ^{-1/2+\theta}D_{x}p_{\alpha}\left(
x\right)  \left(  -A\right)  ^{-1/2-\theta}\right\Vert _{\mathcal{L}\left(
H\right)  }^{r}\right)  \gamma_{\alpha}\left(  dx\right)  \\
&  \leq C\left(  \int_{H}\mathbb{E}\left[  \Delta_{T}\left(  x\right)
^{r}\right]  \gamma_{\alpha}\left(  dx\right)  \right)  ^{1/2}\cdot\\
&  \cdot\left(  \int_{H}\left(  1+\left\Vert \left(  -A\right)  ^{-1/2+\theta
}D_{x}p_{\alpha}\left(  x\right)  \left(  -A\right)  ^{-1/2-\theta}\right\Vert
_{\mathcal{L}\left(  H\right)  }^{2r}\right)  \gamma_{\alpha}\left(
dx\right)  \right)  ^{1/2}%
\end{align*}
renaming the constants. We have%
\[
\Delta_{T}\left(  x\right)  \leq1+C\sup_{t\in\left[  0,T\right]  }\left\Vert
X_{\alpha}\left(  t,x\right)  \right\Vert _{\infty}^{N-1}%
\]
and thus, by  \cite[Theorem 4.8 (iii)]{DaDe17},
\[
\mathbb{E}\left[  \Delta_{T}\left(  x\right)  ^{r}\right]  \leq C+C\left|
x\right| _{H}^{r\left(  N-1\right)  }%
\]
which implies%
\[
\int_{H}\mathbb{E}\left[  \Delta_{T}\left(  x\right)  ^{r}\right]
\gamma_{\alpha}\left(  dx\right)  \leq C.
\]
Finally, since%
\[
\left(  D_{x}p_{\alpha}\left(  x\right)  h\right)  \left(  \xi\right)
= p'(J_\alpha(x(\xi)))h(\xi)%
\]
we have%
\[
\left|D_{x}p_{\alpha}\left(  x\right)  h\right| _{H}\leq C\left\Vert
x\right\Vert _{\infty}^{N-1}\left\Vert h\right\Vert _{H}%
\]
namely%
\[
\left\Vert D_{x}p_{\alpha}\left(  x\right)  \right\Vert _{\mathcal{L}\left(
H\right)  }\leq C\left\Vert x\right\Vert _{\infty}^{N-1}%
\]
and therefore, being both $\left(  -A\right)  ^{-1/2+\theta}$ and $\left(
-A\right)  ^{-1/2-\theta}$ bounded in $H$ (recall that $\theta<\frac{1}{2}$),
\[
\left\Vert \left(  -A\right)  ^{-1/2+\theta}D_{x}p_{\alpha}\left(  x\right)
\left(  -A\right)  ^{-1/2-\theta}\right\Vert _{\mathcal{L}\left(  H\right)
}\leq\left\Vert D_{x}p_{\alpha}\left(  x\right)  \right\Vert _{\mathcal{L}%
\left(  H\right)  }\leq C\left\Vert x\right\Vert _{\infty}^{N-1}%
\]
which implies%
\[
\int_{H}\left(  1+\left\Vert \left(  -A\right)  ^{-1/2+\theta}D_{x}p_{\alpha
}\left(  x\right)  \left(  -A\right)  ^{-1/2-\theta}\right\Vert _{\mathcal{L}%
\left(  H\right)  }^{2r}\right)  \gamma_{\alpha}\left(  dx\right)  \leq C.
\]
\qed
\end{proof}
\begin{Corollary}
\label{corollary est}
Under the assumption of Theorem \ref{tmain} there exist constants $C_{A,B,p}$,
$C_{A,B,p,q,\theta}$, independent of $\epsilon$, such that
\begin{align*}
&  \int_{H}\left\vert B_{\epsilon}\left(  u,F\right)  \left(  t,x\right)
\right\vert \gamma\left(  dx\right)  
  \leq C_{A,B,p}\left\Vert u\left(  t,\cdot\right)  \right\Vert
_{L^{p}\left(  H,\gamma\right)  }\left\Vert \left(  -A\right)  ^{\theta_{0}%
}F\left(  t,\cdot\right)  \right\Vert _{\mathcal{V}\left(  H,H,\gamma\right)
}\\
&  +C_{A,B,p,q,\theta}\left\Vert u\left(  t,\cdot\right)  \right\Vert
_{L^{p}\left(  H,\gamma\right)  }\left\Vert \left(  -A\right)  ^{1/2+\theta
}F\left(  t,\cdot\right)  \right\Vert _{L^{q}\left(  H,\gamma\right)  }
  +\left\Vert u\left(  t,\cdot\right)  \right\Vert _{L^{p}\left(
H,\gamma\right)  }\left\Vert \operatorname{div}F\left(  t,\cdot\right)
\right\Vert _{L^{\frac{p}{p-1}}\left(  H,\gamma\right)  }%
\end{align*}
for all functions $u\in\mathcal{F}C_{b,T}^{1}$ and vector field $F$ of the
form $F=\sum_{h=1}^{n}F_{h}e_{h}$, with $F_{h}\in\mathcal{F}C_{b,T}^{2}$ for
all $h=1,...,n$.
\end{Corollary}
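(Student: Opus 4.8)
The plan is to derive the estimate by passing to the limit $\alpha\to 0$ in the $\alpha$--uniform bound of Proposition \ref{prop B alpha}, crucially using that the constants $C_{A,B,p}$ and $C_{A,B,p,q,\theta}$ there are independent of $\alpha$ and $\epsilon$. The ingredients are the weak convergence $\gamma_\alpha\rightharpoonup\gamma$ together with the uniform moment bounds \eqref{e4.11} (Proposition \ref{p4.16}), and the convergence $X_\alpha(\cdot,x)\to X(\cdot,x)$ in $C_W([0,T];H)$ (Proposition \ref{p4.8}), which through the Bismut--Elworthy--Li formula \eqref{e4.10} yields $P^\alpha_\epsilon\to P_\epsilon$ and $D_xP^\alpha_\epsilon u\to D_xP_\epsilon u$, hence $B^\alpha_\epsilon(u,F)\to B_\epsilon(u,F)$ as $\alpha\to 0$. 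Throughout, $F=\sum_{h=1}^n F_h e_h$ with $F_h\in\mathcal F C^2_{b,T}$, so that $(-A)^{\theta_0}F$, $(-A)^{1/2+\theta}F$ and $\operatorname{div}F$ are all bounded and continuous; this smoothness is what makes the limit passages tractable.

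First I would establish the lower bound on the left--hand side,
\[
\int_H |B_\epsilon(u,F)(t,\cdot)|\,d\gamma\le\liminf_{\alpha\to 0}\int_H |B^\alpha_\epsilon(u,F)(t,\cdot)|\,d\gamma_\alpha .
\]
Testing against an arbitrary $\psi\in C_b(H)$ with $|\psi|\le 1$, the pointwise convergence $B^\alpha_\epsilon(u,F)\to B_\epsilon(u,F)$ (dominated uniformly in $\alpha$ by a function of polynomial growth, so that \eqref{e4.11} supplies $\gamma_\alpha$--uniform integrability) together with $\gamma_\alpha\rightharpoonup\gamma$ gives $\int_H \psi\,B^\alpha_\epsilon(u,F)\,d\gamma_\alpha\to\int_H \psi\,B_\epsilon(u,F)\,d\gamma$; taking the supremum over such $\psi$ yields the displayed inequality.

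Next I would pass to the limit in each of the three factors on the right--hand side. The norms $\|u(t,\cdot)\|_{L^p(\gamma_\alpha)}$, $\|(-A)^{1/2+\theta}F(t,\cdot)\|_{L^q(\gamma_\alpha)}$ and $\|\operatorname{div}F(t,\cdot)\|_{L^{p/(p-1)}(\gamma_\alpha)}$ converge to the corresponding $\gamma$--norms, again by $\gamma_\alpha\rightharpoonup\gamma$ and \eqref{e4.11}, since the integrands are bounded and continuous. The delicate term is the first: writing $G:=(-A)^{\theta_0}F(t,\cdot)$ and using only the definition of $\mathbf P^\alpha_\epsilon$,
\[
\int_H \Big\langle \Big(\tfrac{I-\mathbf P^\alpha_\epsilon}{\epsilon}\Big)G,G\Big\rangle_H d\gamma_\alpha=\frac1\epsilon\Big(\int_H |G|_H^2\,d\gamma_\alpha-\int_H \mathbb E\big[\langle G(X_\alpha(\epsilon,x)),G(x)\rangle_H\big]\,d\gamma_\alpha\Big),
\]
and I would let $\alpha\to 0$ at fixed $\epsilon\in(0,1)$, using $X_\alpha(\epsilon,\cdot)\to X(\epsilon,\cdot)$ and $\gamma_\alpha\rightharpoonup\gamma$, to obtain
\[
\lim_{\alpha\to 0}\int_H \Big\langle \Big(\tfrac{I-\mathbf P^\alpha_\epsilon}{\epsilon}\Big)G,G\Big\rangle_H d\gamma_\alpha=\int_H \Big\langle \Big(\tfrac{I-\mathbf P_\epsilon}{\epsilon}\Big)G,G\Big\rangle_H d\gamma\le\|(-A)^{\theta_0}F(t,\cdot)\|^2_{\mathcal V(H,H,\gamma)},
\]
the last inequality being precisely the definition of the $\mathcal V(H,H,\gamma)$--norm as a supremum over $\epsilon\in(0,1)$. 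Combining this with the convergence of the remaining factors, with the $\liminf$ bound on the left--hand side, and with the $\alpha$--independence of the constants, the estimate follows.

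The main obstacle I anticipate is the simultaneous passage to the limit in integrals of the form $\int_H \Phi_\alpha\,d\gamma_\alpha$, where both the integrand $\Phi_\alpha$ (through $X_\alpha$, $P^\alpha_\epsilon$ and $\mathbf P^\alpha_\epsilon$) and the measure $\gamma_\alpha$ vary with $\alpha$: weak convergence of the measures alone does not suffice. The argument hinges on coupling the pointwise (indeed locally uniform) convergence coming from $X_\alpha\to X$ with the $\gamma_\alpha$--uniform integrability furnished by the moment bounds \eqref{e4.11}, and on keeping $\epsilon$ fixed throughout so that the first right--hand term is ultimately controlled by the supremum defining the $\mathcal V(H,H,\gamma)$--norm.
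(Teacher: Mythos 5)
Your proposal is correct and follows essentially the same route as the paper: pass to the limit $\alpha\to0$ in Proposition \ref{prop B alpha}, using weak convergence of $\gamma_\alpha$ together with tightness and the locally uniform convergence of $P^\alpha_\epsilon$, $D_xP^\alpha_\epsilon$ (hence of $B^\alpha_\epsilon$), and finally bounding the limit of the $(I-\mathbf{P}^\alpha_\epsilon)/\epsilon$ term by the $\mathcal V(H,H,\gamma)$--norm via the supremum over $\epsilon$ in its definition. The only, immaterial, difference is that you settle for a $\liminf$ lower bound on the left--hand side obtained by testing against $\psi\in C_b(H)$ with $|\psi|\le 1$, whereas the paper proves full convergence of $\int_H|B^\alpha_\epsilon(u,F)|\,d\gamma_\alpha$ by splitting off the region outside a compact set of large $\gamma_\alpha$--measure.
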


\begin{proof}
Let us consider term by term the main inequality of Proposition
\ref{prop B alpha}. Since $x\mapsto u\left(  t,\cdot\right)  $ is bounded
continuous function,
\[
\lim_{\alpha\rightarrow0}\left\Vert u\left(  t,\cdot\right)  \right\Vert
_{L^{p}\left(  H,\gamma_{\alpha}\right)  }^{p}=\lim_{\alpha\rightarrow0}%
\int_{H}\left\vert u\left(  t,x\right)  \right\vert ^{p}\gamma_{\alpha}\left(
dx\right)  =\int_{H}\left\vert u\left(  t,x\right)  \right\vert ^{p}%
\gamma\left(  dx\right)
\]
because $\gamma_{\alpha}$ converges weakly to $\gamma$. The same argument
applies to the terms $\left\Vert \left(  -A\right)  ^{1/2+\theta}F\left(
t,\cdot\right)  \right\Vert _{L^{q}\left(  H,\gamma_{\alpha}\right)  }$ and
$\left\Vert \operatorname{div}F\left(  t,\cdot\right)  \right\Vert
_{L^{\frac{p}{p-1}}\left(  H,\gamma_{\alpha}\right)  }$. 

We have to prove that
\[
\lim_{\alpha\rightarrow0}\int_{H}\left\vert B_{\epsilon}^{\alpha}\left(
u,F\right)  \left(  t,x\right)  \right\vert \gamma_{\alpha}\left(  dx\right)
=\int_{H}\left\vert B_{\epsilon}\left(  u,F\right)  \left(  t,x\right)
\right\vert \gamma\left(  dx\right)  .
\]
We have%
\[
\left\vert \int_{H}\left\vert B_{\epsilon}^{\alpha}\left(  u,F\right)  \left(
t,x\right)  \right\vert \gamma_{\alpha}\left(  dx\right)  -\int_{H}\left\vert
B_{\epsilon}\left(  u,F\right)  \left(  t,x\right)  \right\vert \gamma\left(
dx\right)  \right\vert \leq I_{1}+\left\vert I_{2}\right\vert
\]
where%
\begin{align*}
I_{1}  & =\int_{H}\left\vert \left\vert B_{\epsilon}^{\alpha}\left(
u,F\right)  \left(  t,x\right)  \right\vert -\left\vert B_{\epsilon}\left(
u,F\right)  \left(  t,x\right)  \right\vert \right\vert \gamma_{\alpha}\left(
dx\right)  \\
I_{2}  & =\int_{H}\left\vert B_{\epsilon}\left(  u,F\right)  \left(
t,x\right)  \right\vert \gamma_{\alpha}\left(  dx\right)  -\int_{H}\left\vert
B_{\epsilon}\left(  u,F\right)  \left(  t,x\right)  \right\vert \gamma\left(
dx\right)  .
\end{align*}
Recall that $\phi$ bounded continuous implies $x\mapsto\left(  P_{\epsilon
}^{\alpha}\phi\right)  \left(  x\right)  $ continuous and bounded by
$\left\Vert \phi\right\Vert _{\infty}$. One can prove that   when $\phi$ has also bounded continuous derivatives,
$x\mapsto\left(  D_{x}P_{\epsilon}^{\alpha}\phi\right)  \left(  x\right)  $ is
also continuous and uniformly bounded in $\alpha$. The same is true without
$\alpha$. Then $\left\vert B_{\epsilon}\left(  u,F\right)  \left(  t,x\right)
\right\vert $ is bounded continuous. It follows that $\left\vert
I_{2}\right\vert \rightarrow0$ as $\alpha\rightarrow0$, because $\gamma
_{\alpha}$ converges weakly to $\gamma$. Moreover, since the family $\left\{
\gamma_{\alpha}\right\}  $ is tight, given $\eta>0$ there is a compact set
$K_{\eta}\subset H$ such that $\gamma_{\alpha}\left(  K_{\eta}\right)
\geq1-\eta$ for all $\alpha$; and for what we have just said, outside
$K_{\eta}$ we may use the fact that $\left\vert B_{\epsilon}^{\alpha}\left(
u,F\right)  \left(  t,x\right)  \right\vert $ is uniformly bounded in $\alpha
$. Then we rewrite%
\[
I_{1}\leq\int_{K_{\eta}}\left\vert \left\vert B_{\epsilon}^{\alpha}\left(
u,F\right)  \left(  t,x\right)  \right\vert -\left\vert B_{\epsilon}\left(
u,F\right)  \left(  t,x\right)  \right\vert \right\vert \gamma_{\alpha}\left(
dx\right)  +C\eta.
\]
Recall that, when $\phi$ is bounded continuous, $P_{\epsilon}^{\alpha}\phi$
converges to $P_{\epsilon}\phi$ as $\alpha\rightarrow0$ uniformly on bounded
sets of $H$; and when $\phi$ has also bounded continuous derivatives, also
$D_{x}P_{\epsilon}^{\alpha}\phi$ converges to $D_{x}P_{\epsilon}\phi$ as
$\alpha\rightarrow0$, uniformly on bounded sets of $H$. Hence $\left\vert
\left\vert B_{\epsilon}^{\alpha}\left(  u,F\right)  \left(  t,x\right)
\right\vert -\left\vert B_{\epsilon}\left(  u,F\right)  \left(  t,x\right)
\right\vert \right\vert $ converges to zero uniformly on $K_{\eta}$. 

With the same argument, given $\phi\in\mathcal{F}C_{b}^{2}$, for every
$\epsilon$, we have
\[
\lim_{\alpha\rightarrow0}\int_{H}\phi\left(  x\right)  \left(  \frac
{I-P_{\epsilon}^{\alpha}}{\epsilon}\right)  \phi\left(  x\right)
\gamma_{\alpha}\left(  dx\right)  =\int_{H}\phi\left(  x\right)  \left(
\frac{I-P_{\epsilon}}{\epsilon}\right)  \phi\left(  x\right)  \gamma\left(
dx\right)  .
\]
Then, for every $\epsilon$,
\[
\lim_{\alpha\rightarrow0}\int_{H}\phi\left(  x\right)  \left(  \frac
{I-P_{\epsilon}^{\alpha}}{\epsilon}\right)  \phi\left(  x\right)
\gamma_{\alpha}\left(  dx\right)  \leq\left\Vert \phi\right\Vert
_{\mathcal{V}\left(  H,\gamma\right)  }^{2}.
\]
We apply this inequality in the vector case to $\left(  -A\right)
^{\theta_{0}}F\left(  t,\cdot\right)  $.
\qed
\end{proof}

Finally, we have our main estimate.

\begin{Theorem}
\label{trank}
Under the assumptions of Theorem \ref{tmain} there exist constants $C_{A,B,p}$,$C_{A,B,p,q,\theta}$ such that
\begin{align*}
  \int_{0}^{T}\int_{H}\left\vert B_{\epsilon}\left(  u,F\right)  \left(
t,x\right)  \right\vert \gamma\left(  dx\right)  dt
  \leq C_{A,B,p,\theta}\left\Vert u\right\Vert _{L^{p}\left(  0,T;L^{p}%
\left(  H,\gamma\right)  \right)  }\left\Vert F\right\Vert _{p,q,\gamma,T}%
\end{align*}
for all functions $u\in L^{p}\left(  0,T;L^{p}\left(  H,\gamma\right)
\right)  $ and vector fields $F:\left[  0,T\right]  \times H\rightarrow
D\left(  \left(  -A\right)  ^{1/2+\theta}\right)  $ such that $\left\Vert
F\right\Vert _{p,q,\gamma,T}$ is finite. Moreover, for such $\left(
u,F\right)  $,
\[
\lim_{\epsilon\rightarrow0}\int_{0}^{T}\int_{H}\left\vert B_{\epsilon}\left(
u,F\right)  \left(  t,x\right)  \right\vert \gamma\left(  dx\right)  dt=0.
\]
Under these conditions, the rank condition follows.
\end{Theorem}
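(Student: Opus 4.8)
The plan is to derive the time-integrated bound from the pointwise-in-$t$ estimate of Corollary~\ref{corollary est}, then to prove the vanishing of the commutator by a separate, direct argument, and finally to feed both into the construction sketched around \eqref{er}. \textbf{The estimate.} First I would integrate the inequality of Corollary~\ref{corollary est} over $t\in[0,T]$. Each of its three terms is a product of $\|u(t,\cdot)\|_{L^p(H,\gamma)}$ with a factor depending only on $F(t,\cdot)$, so Hölder's inequality in time with exponents $p$ and $p'=\tfrac{p}{p-1}$ turns them into $\|u\|_{L^p(0,T;L^p(H,\gamma))}$ times, respectively, $\|(-A)^{\theta_0}F\|_{L^{p'}(0,T;\mathcal V(H,H,\gamma))}$, $\|(-A)^{1/2+\theta}F\|_{L^{p'}(0,T;L^q(H,\gamma))}$ and $\|\operatorname{div}F\|_{L^{p'}(0,T;L^{p'}(H,\gamma))}$. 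Their sum is exactly $\|F\|_{p,q,\gamma,T}$, proving the stated bound for $u\in\mathcal FC^1_{b,T}$ and $F=\sum_h F_h e_h$ with $F_h\in\mathcal FC^2_{b,T}$. Since $B_\epsilon(\cdot,\cdot)$ is bilinear and $\mathcal FC^1_{b,T}$ is dense in $L^p(0,T;L^p(H,\gamma))$, I would extend the estimate to all $u\in L^p(0,T;L^p(H,\gamma))$ and, approximating a general $F$ with finite norm by smooth cylindrical fields in $\|\cdot\|_{p,q,\gamma,T}$, to all admissible $F$, defining $B_\epsilon(u,F)$ by continuity.

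\textbf{The limit.} Here lies the main obstacle: the bound just proved does \emph{not} tend to $0$ as $\epsilon\to0$, because the divergence term and the $\mathcal V$-norm term are insensitive to $\epsilon$; indeed in the decomposition \eqref{e3.29d} of Lemma~\ref{l3.10} the summand $I_3=P_\epsilon^\alpha(u\,\operatorname{div}F)$ tends to $u\,\operatorname{div}F\neq0$ and is only cancelled by parts of $I_1+I_2$. Hence the vanishing cannot come from the triangle-inequality estimate and must be obtained directly. For smooth $(u,F)$ I would show $B_\epsilon(u,F)\to0$ pointwise: by the identity \eqref{e3.9aa} (valid for $P_\epsilon$ in the limit $\alpha\to0$), $\langle D_xP_\epsilon u,h\rangle=\langle P_\epsilon D_xu,h\rangle+\int_0^\epsilon P_{\epsilon-s}[\langle Ah+D_xp(x)h,D_xP_su\rangle]\,ds$, whose first term tends to $\langle D_xu,h\rangle$ and whose integral tends to $0$, so $D_xP_\epsilon u\to D_xu$, while $P_\epsilon(\langle D_xu,F\rangle)\to\langle D_xu,F\rangle$ by strong continuity of $P_\epsilon$ on bounded continuous functions; thus $B_\epsilon(u,F)\to\langle D_xu,F\rangle-\langle D_xu,F\rangle=0$. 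Using \eqref{e3.10c} to bound $|D_xP_\epsilon u|\le\|D_xu\|_\infty$ one gets $|B_\epsilon(u,F)|\le 2\|D_xu\|_\infty\|F\|_\infty$, so dominated convergence on the finite measure space $[0,T]\times H$ upgrades this to convergence in $L^1(dt\otimes\gamma)$. For general $(u,F)$ I would pick smooth $(u_n,F_n)$ with $u_n\to u$ in $L^p$ and $F_n\to F$ in $\|\cdot\|_{p,q,\gamma,T}$, write $B_\epsilon(u,F)=B_\epsilon(u-u_n,F)+B_\epsilon(u_n,F-F_n)+B_\epsilon(u_n,F_n)$, bound the first two summands by the uniform estimate of the previous step, take $\limsup_{\epsilon\to0}$ (killing the third), and then let $n\to\infty$.

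\textbf{The rank condition.} Finally, applying $P_\epsilon$ to \eqref{e3.11} gives $\mathcal K(P_\epsilon u_j)=P_\epsilon f+\langle F-F_j,D_xP_\epsilon u_j\rangle+B_\epsilon(u_j,F_j)$. For fixed $\epsilon>0$, the Bismut--Elworthy--Li formula \eqref{e4.10} yields $|D_xP_\epsilon u_j|\le C\,\epsilon^{-1/2}\|u_j\|_\infty\le C\,\epsilon^{-1/2}T\|f\|_\infty$ uniformly in $j$, so the middle term tends to $0$ in $L^1$ as $j\to\infty$ (since $F_j\to F$ in $L^1(dt\otimes\gamma)$ by Hypothesis~\ref{h2}), while bilinearity and the uniform estimate give $B_\epsilon(u_j,F_j)\to B_\epsilon(u,F)$. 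Letting then $\epsilon\to0$, $P_\epsilon f\to f$ in $L^p$ and $B_\epsilon(u,F)\to0$ by the previous step, so $\mathcal K(P_\epsilon u_j)$ approximates $f$ arbitrarily well in $L^p$; as $f$ ranges over the dense set $\mathcal FC^1_{b,T}$, the range condition \eqref{e3.8} follows, and uniqueness then comes from Proposition~\ref{p3.1}. The two most delicate technical points I expect are the density of smooth cylindrical fields in $\|\cdot\|_{p,q,\gamma,T}$ (the $\mathcal V$-component involves the non-symmetric $P_\epsilon$) and verifying that $P_\epsilon u_j$ may be taken, or suitably approximated, within $\mathcal FC^1_{b,T}$.
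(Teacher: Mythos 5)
The paper's own ``proof'' of Theorem \ref{trank} is a single sentence deferring to \cite{DaFlRoe14}, so your reconstruction is being measured against the route that reference takes --- and it matches it. Your two structural decisions are the right ones: the time-integrated bound is indeed just Corollary \ref{corollary est} plus H\"older in $t$ with exponents $p$ and $p/(p-1)$ (the three factors recombining into $\left\Vert F\right\Vert_{p,q,\gamma,T}$ by its very definition), and you are correct that this bound is only uniform in $\epsilon$, not infinitesimal, so the vanishing statement must be proved separately for smooth cylindrical $(u,F)$ --- via \eqref{e3.9aa}, the gradient bound coming from \eqref{e3.10c}, and strong continuity of $P_\epsilon$ --- and then transferred to general data by bilinearity and the uniform estimate. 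This is exactly the DiPerna--Lions scheme the paper invokes. The three technical points you flag at the end are genuine and are also left implicit by the paper: the density of smooth cylindrical fields in $\left\Vert\cdot\right\Vert_{p,q,\gamma,T}$ (delicate because of the non-symmetric $\mathcal V$-component), the fact that $P_\epsilon u_j$ is not itself cylindrical and must be re-approximated inside $\mathcal FC^1_{b,T}$, and --- the one place where your write-up is looser than it should be --- the double limit in the rank-condition step: the claim $B_\epsilon(u_j,F_j)\to B_\epsilon(u,F)$ as $j\to\infty$ presupposes both a limit $u$ of the $u_j$ in $L^p$ and convergence $F_j\to F$ in $\left\Vert\cdot\right\Vert_{p,q,\gamma,T}$, neither of which is automatic from Hypothesis \ref{h2}; the cleaner formulation is the iterated limit \eqref{er}, $\lim_{\epsilon\to0}\limsup_{j\to\infty}$, controlled by the uniform bound in both parameters. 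None of this invalidates the argument; it is the same level of detail the paper itself omits.
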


\begin{proof}
The proof is similar to \cite{DaFlRoe14}.
\qed
\end{proof}

\newpage
\appendix
\noindent
\textbf{Appendix}

\renewcommand{\thesection}{A}  
\section{Deterministic Feynman--Kac formula and the solution of \eqref{e2.4a} for sufficiently regular $F$}
  
Consider the  equation
\begin{equation}
\label{eA1}
\left\{ \begin{array}{l}
\ds\frac{d}{dt}\,\xi(t) =\widetilde F(t,\xi(t)) , \\
\\
\xi(s)=x,\quad x\in \R^d,
\end{array} \right.
\end{equation}
with $\widetilde F$ regular, namely it belongs to  the class $\mathcal V \mathcal F C^1_b(H)$.
Let  $V\colon [0,T] \times \R^d\to \R$  be also regular.
We want to solve
\begin{equation}
 \label{eA2}
\left\{ \begin{array}{lll}
 v_{s}(s,x)&+&\displaystyle{\langle D_xv(s,x), \widetilde F(s,x)   \rangle +V(s,x)v(s,x)=0,  \quad 0\leq s< T, }\\
\\
v(T,x)&=&\varphi (x), \quad x \in H.
\end{array} \right.
\end{equation}
 The following result is well known, see e.g. \cite{Ma07}. We present, however, a proof for the reader's convenience.
\begin{Proposition}
\label{pA1}
Assume $\widetilde F\in C_b([0,T]\times \R^d;\R^d)$ such that $\widetilde F(t,\cdot)\in   C^1(\R^d,\R^d)$ for all $t\in [0,T]$ and let $V\in C([0,T]\times \R^d)$ such that  $V(t,\cdot)\in C^1(\R^d)$ for all $t\in[0,T]$ such that $D_xV:[0,T]\times \R^d\to  \R^d$ is continuous. Let $\varphi\in C^1(\R^d)$. Then the solution to \eqref{eA2} is given by
\begin{equation}
\label{eA3}
v(s,x)= \varphi (\xi(T,s,x))e^{\int_{s}^{T}V(u 
,\xi(u ,s,x))du }, \qquad (s,x)\in [0,T]\times \R^d,
\end{equation}
where for $s\le t$, $\xi(t,s,x)$ denotes the solution to   \eqref{eA1} at time $t$ when started at time $s$ at $x\in\R^d$. In particular,  $v(\cdot,x)\in C^1([0,T])$ for every $x\in\R^d$ and $D_tv\in C([0,T]\times \R^d)$.
\end{Proposition}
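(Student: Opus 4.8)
The plan is to prove the proposition by the method of characteristics, verifying directly that the right--hand side of \eqref{eA3} solves \eqref{eA2}. First I would record the basic facts about the characteristic flow. Since $\widetilde F$ is bounded with $\widetilde F(t,\cdot)\in C^1(\R^d,\R^d)$ and continuous time dependence, the ODE \eqref{eA1} has, for every $(s,x)$, a unique global solution $t\mapsto\xi(t,s,x)$ (boundedness of $\widetilde F$ gives the a priori bound $|\xi(u,s,x)-x|\le\|\widetilde F\|_\infty\,|u-s|$, ruling out blow--up), and by standard ODE theory $\xi$ is of class $C^1$ jointly in $(t,s,x)$, the map $x\mapsto\xi(t,s,x)$ is a $C^1$--diffeomorphism of $\R^d$, and one has $\xi(s,s,x)=x$ together with the cocycle identity $\xi(t,s,\xi(s,\tau,x))=\xi(t,\tau,x)$.

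The key algebraic identity I would establish is
\[
\partial_s\xi(t,s,x) = -D_x\xi(t,s,x)\,\widetilde F(s,x),
\]
obtained by differentiating the cocycle identity $\xi(t,s,\xi(s,\tau,x))=\xi(t,\tau,x)$ in $s$ (using $\tfrac{d}{ds}\xi(s,\tau,x)=\widetilde F(s,\xi(s,\tau,x))$) and then setting $\tau=s$. Writing $\mathcal T:=\partial_s+\langle\widetilde F(s,\cdot),D_x\,\cdot\,\rangle$ for the transport operator from \eqref{eA2}, this identity says exactly that $\mathcal T[\xi(t,s,x)]=0$ for each fixed $t$, i.e. that $\xi(t,\cdot,\cdot)$ is constant along characteristics. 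This single conservation law is what drives the whole computation.

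With it in hand the verification is mechanical. For the terminal factor the chain rule gives $\mathcal T[\varphi(\xi(T,s,x))]=D\varphi(\xi(T,s,x))\cdot\mathcal T[\xi(T,s,x)]=0$. For the exponent $E(s,x):=\int_s^T V(u,\xi(u,s,x))\,du$, differentiating under the integral sign (justified because $\xi$ and $V,D_xV$ are $C^1$ and $\xi(u,s,x)$ ranges over a bounded set for $u\in[s,T]$) yields
\[
\mathcal T[E](s,x)=-V(s,x)+\int_s^T\big\langle D_xV(u,\xi(u,s,x)),\,\mathcal T[\xi(u,s,x)]\big\rangle\,du=-V(s,x),
\]
again by the key identity. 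Hence for $v=\varphi(\xi(T,\cdot,\cdot))\,e^{E}$ the product rule gives $\mathcal T[v]=e^{E}\,\mathcal T[\varphi(\xi(T,\cdot,\cdot))]+v\,\mathcal T[E]=-V\,v$, which is the PDE in \eqref{eA2}; the terminal condition $v(T,x)=\varphi(\xi(T,T,x))\,e^{0}=\varphi(x)$ is immediate. The asserted regularity ($v(\cdot,x)\in C^1([0,T])$ and $D_tv\in C([0,T]\times\R^d)$) then follows from the formula \eqref{eA3} via the chain rule and the joint $C^1$--dependence of $\xi$ on its arguments.

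I expect the main obstacle to be the infrastructure rather than the final computation: proving that $\xi$ is jointly $C^1$ (so that $D_x\xi$ and $\partial_s\xi$ exist and are continuous), deriving the key identity $\partial_s\xi=-D_x\xi\,\widetilde F$, and justifying differentiation under the integral defining $E$. Once these are secured, both the terminal and the exponential contributions collapse through the one fact $\mathcal T[\xi(t,\cdot,\cdot)]=0$, and \eqref{eA2} follows at once.
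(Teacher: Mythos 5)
Your proof is correct, but it takes a genuinely different route from the paper's. You isolate the conservation law $\partial_s\xi(t,s,x)=-D_x\xi(t,s,x)\,\widetilde F(s,x)$ (equivalently, that $\xi(t,\cdot,\cdot)$ is annihilated by the transport operator $\partial_s+\langle \widetilde F(s,\cdot),D_x\cdot\rangle$) and then verify \eqref{eA2} by the chain and product rules. The paper never differentiates $\xi$ with respect to the initial time: it telescopes $v(s,x)-\varphi(x)$ over a partition $\{s=s_0<\cdots<s_n=T\}$, splits each increment into a spatial part $v(s_k,x)-v(s_k,\xi(s_k,s_{k-1},x))$ (Taylor's formula in $x$, yielding $\int_s^T\langle D_xv(r,x),\widetilde F(r,x)\rangle\,dr$ in the limit) and a part along the flow (handled via the cocycle identity $\xi(T,s_k,\xi(s_k,s_{k-1},x))=\xi(T,s_{k-1},x)$, which makes the $\varphi$--factor cancel and leaves only the increment of the exponential, yielding $-\int_s^T v(r,x)V(r,x)\,dr$), and so arrives at the integral form of \eqref{eA2}; the $C^1$ regularity in $s$ then falls out of that integral identity. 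Your version is the cleaner textbook computation, but it leans on the full joint $C^1$ dependence of the flow on $(s,x)$ --- in particular the existence and continuity of $\partial_s\xi$ and the interchange of $\mathcal T$ with the integral defining the exponent --- which requires $D_x\widetilde F$ to be jointly continuous; this is not literally in the stated hypotheses, though it does hold in the only setting where the result is used ($\widetilde F\in \mathcal V\mathcal F C^1_{b}$). The paper's Riemann--sum argument gets by with $x$-differentiability of the flow plus the flow property, at the cost of a less transparent limiting step (the ``$\sim$'' estimates in $J_1$ and $J_2$).
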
 

\begin{proof}
We only present the main steps. We  shall check that $v$ defined by \eqref{eA3} is a solution to \eqref{eA2}.
 \medskip

For any decomposition $\{s=s_0<s_1<\cdots <s_n=T\} $  of $[s,T]$ we   write 
\begin{displaymath}
v(s,x)-\varphi (x)=-\sum_{k=1}^{n}[v(s_{k},x)-v(s_{k-1},x)],
\end{displaymath}
which is equivalent to,
\begin{equation}
\label{eA4}
\begin{array}{l}
\displaystyle{v(s,x)-\varphi (x)}=\ds{-\sum_{k=1}^{n}[v(s_{k},x)-v(s_{k},\xi(
s_{k},s_{k-1},x))]}\\
\\
\displaystyle{-\sum_{k=1}^{n}[v(s_{k},\xi(s_{k},s_{k-1},x))-v(s_{k-1},x)]=:J_1-J_2.
}
\end{array}
\end{equation}
Concerning $J_1$ we write thanks to Taylor's formula
\begin{equation}
\label{eA5}
\begin{array}{l}
\ds J_1\sim \sum_{k=1}^{n}\langle D_{x}v(s_{k},x),\xi(s_{k},s_{k-1},x)-x  \rangle\sim
\sum_{k=1}^{n}\langle D_{x}v(s_{k},x),\widetilde F(s_k,x) \rangle (s_{k}-s_{k-1})\\
\\
\ds\to \int_s^T \langle D_{x}v(r,x),\widetilde F(r,x) \rangle dr.
\end{array}
\end{equation}\bigskip

Concerning $J_2$ we write   \footnote{In the second line below we use that $\xi(T,s_k,\xi(s_{k},s_{k-1},x))=\xi(T,s_{k-1},x)$}
\begin{equation}
\label{eA6}
\begin{array}{l}
\ds J_2=\sum_{k=1}^nv(s_{k},\xi(s_{k},s_{k-1},x))-v(s_{k-1},x))\\
\\
\ds=\sum_{k=1}^n\varphi (\xi(T,s_k,\xi(s_{k},s_{k-1},x)))e^{\int_{s_k}^{T}V(u ,\xi(u ,s_k,\xi(s_{k},s_{k-1},x)))du
}\\
\\
\ds-\sum_{k=1}^n\varphi (\xi(T,s_{k-1},x))e^{\int_{s_{k-1}}^{T}V(u ,\xi(u ,s_{k-1},x))du }\\
\\
\ds=\sum_{k=1}^n\varphi (\xi(T,s_{k-1},x))\left[e^{\int_{s_k}^{T}V(u ,\xi(u ,s_{k-1},x))du}-e^{\int_{s_{k-1}}^{T}V(u ,\xi(u ,s_{k-1},x))du 
}\right]\\
\\
\ds=\sum_{k=1}^n v(s_{k-1},x))
\left( e^{-\int_{s_{k-1}}^{s_k}V(u ,\xi(u ,s_{k-1},x))du }-
1 \right)\\
\\
\ds\sim - \sum_{k=1}^nv(s_{k-1},x)V(s_{k-1},x)(s_{k}-s_{k-1})\to-\int_s^T v(r,x)V(r,x)dr.
\end{array}
\end{equation}

Replacing $J_1$  and $J_2$ given by \eqref{eA5}  and \eqref{eA6} respectively in \eqref{eA4}, yields
$$
v(s,x)=\varphi(x)+\int_s^T \langle D_{x}v(r,x),\widetilde F(r,x) \rangle dr+\int_s^T v(r,x)V(r,x)dr
$$
and the claim is proved.
\qed
\end{proof}

As a trivial consequence we obtain
\begin{Corollary}
\label{cA2}
Let $\Psi\in C^2(\R^d)$, $\Psi$ bounded  and strictly positive.
  Let $F\in C_b([0,T]\times \R^d;\R^d)$ such  that $F(t,\cdot)\in C^1(\R^d;\R^d)$ and  define
  $$
  D^*_xF(t,\cdot):=-\mbox{\rm div}\,F(t,\cdot)-\langle F(t,\cdot), D_x\Psi/\Psi\rangle_{\R^d}.
  $$
  Assume that $D^*_xF(t,\cdot)\in C^1(\R^d)$ for all $t\in[0,T]$, and $D^*_xF\in C([0,T]\times \R^d)$,
$D_xD^*_xF\in C([0,T]\times \R^d;\R^d)$. Then for every $\rho_0\in C^1(\R^d),\,\rho_0\ge 0,$
$$
\rho(t,x):= \rho_0 (\xi(T,T-t,x))e^{\int_{0}^{t}D_x^*F(T-u 
,\xi(T-u ,T-t,x))du }
$$
is a solution of \eqref{e2.4a}, where $\xi(\cdot,s,x)$ is the solution to \eqref{eA1} started at time $s$ at $x\in\R^d$, with $\widetilde F(t,x):=-F(T-t,x),\; (t,x)\in[0,T]\times \R^d$. Furthermore, $\rho(\cdot,x)\in C^1([0,T])$ for every $x\in\R^d$ and $D_t\rho\in C([0,T]\times \R^d).$

\end{Corollary}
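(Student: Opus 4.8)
The plan is to obtain the corollary from the Feynman--Kac representation of Proposition \ref{pA1} by a time reversal, since \eqref{e2.4a} is a forward Cauchy problem (datum $\rho_0$ at $t=0$) whereas \eqref{eA2} is a backward problem (datum $\varphi$ at $s=T$). Concretely, I would set $\rho(t,x):=v(T-t,x)$ and determine the backward problem that $v$ must solve. Using $D_t\rho(t,x)=-v_s(T-t,x)$ and $D_x\rho(t,x)=D_xv(T-t,x)$, substituting into \eqref{e2.4a}, writing $s=T-t$ and multiplying by $-1$, one is led to
$$v_s(s,x)+\langle D_xv(s,x),-F(T-s,x)\rangle+D_x^*F(T-s,x)\,v(s,x)=0,\qquad v(T,\cdot)=\rho_0.$$
This is exactly \eqref{eA2} with $\widetilde F(s,x)=-F(T-s,x)$ (the field prescribed in the statement), with potential $V(s,x):=D_x^*F(T-s,x)$, and with $\varphi:=\rho_0$; the forward initial condition $\rho(0,\cdot)=\rho_0$ corresponds to the terminal condition $v(T,\cdot)=\varphi$.

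Next I would verify that $\widetilde F$, $V$ and $\varphi$ meet the hypotheses of Proposition \ref{pA1}. Because $F\in C_b([0,T]\times\R^d;\R^d)$ and $F(t,\cdot)\in C^1$, the reflected field $\widetilde F$ lies in $C_b$ with $\widetilde F(s,\cdot)\in C^1$. The standing assumptions $D_x^*F\in C([0,T]\times\R^d)$, $D_x^*F(t,\cdot)\in C^1(\R^d)$ and $D_xD_x^*F\in C([0,T]\times\R^d;\R^d)$ are stable under the reflection $t\mapsto T-t$, and hence yield the continuity of $V$, the $C^1$ regularity of $V(s,\cdot)$, and the continuity of $D_xV$ required in Proposition \ref{pA1}; finally $\varphi=\rho_0\in C^1$. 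Here $\Psi$ is the strictly positive bounded $C^2$ density (playing the role of $\Psi^2_{M,l}(\cdot,y)$ in the application to Theorem \ref{t2.1n}), so that $D_x^*F=-\operatorname{div}F-\langle F,D_x\Psi/\Psi\rangle$ is well defined with the stated regularity.

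With the hypotheses in place, Proposition \ref{pA1} gives the explicit backward solution
$$v(s,x)=\rho_0\big(\xi(T,s,x)\big)\,\exp\Big(\int_s^TD_x^*F\big(T-u,\xi(u,s,x)\big)\,du\Big),$$
where $\xi(\cdot,s,x)$ denotes the flow of $\widetilde F$. Setting $s=T-t$ and performing the substitution $u\mapsto T-u$ in the integral then transforms this into the closed-form expression for $\rho(t,x)=v(T-t,x)$ asserted in the statement, while the regularity $\rho(\cdot,x)\in C^1([0,T])$ together with $D_t\rho\in C([0,T]\times\R^d)$ is inherited from the corresponding regularity of $v$ in Proposition \ref{pA1} through $t\mapsto T-t$.

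The one genuinely delicate point is the bookkeeping of the two successive time changes $s=T-t$ and $u\mapsto T-u$: one must track carefully which time argument of $F$, and thus of $D_x^*F$, survives in the exponent, and keep the direction of the flow $\xi$ (backward in the original variable, forward after reflection) straight. Everything else is a routine substitution, which is precisely why the statement is a direct corollary of Proposition \ref{pA1}.
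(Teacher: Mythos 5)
Your proposal is correct and is precisely the paper's own (one-line) proof: apply Proposition \ref{pA1} with $\widetilde F(s,x)=-F(T-s,x)$, $V(s,x)=D_x^*F(T-s,x)$ and $\varphi=\rho_0$, then undo the time reversal $s=T-t$. One caveat on the bookkeeping you yourself flag as delicate: carrying out the substitution $u\mapsto T-u$ in $\int_{T-t}^{T}V\bigl(u,\xi(u,T-t,x)\bigr)\,du$ actually yields $\int_{0}^{t}D_x^*F\bigl(u,\xi(T-u,T-t,x)\bigr)\,du$ (equivalently, check along the forward characteristic $\phi(r)=\xi(T-r,T-t,x)$, which satisfies $\dot\phi(r)=F(r,\phi(r))$ and $\phi(t)=x$, so the integrand at time $r$ is $D_x^*F(r,\phi(r))$), so the first time argument of $D_x^*F$ in the exponent should be $u$ rather than $T-u$; the formula as printed in Corollary \ref{cA2} and in \eqref{e2.17} appears to carry a typo which your derivation, done carefully, corrects rather than reproduces.
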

\begin{proof}
Apply Proposition \ref{pA1} with $\widetilde F$ as in the assertion above,
$$
V(t,x)=D_x^* F(T-t,x),\quad (t,x)\in[0,T]\times\R^d
$$
and $\varphi:=\rho_0$.
\qed
\end{proof}

\renewcommand{\thesection}{B}
\section{A remark on the Burkholder--Davis--Gundy inequality}

Our aim in this section is to prove the following proposition.
\begin{Proposition}
\label{pB1}
Let  $p\ge 4$. Then for every $t\ge 0,$
\begin{equation}
\label{eB1}
\E\sup_{s\in [0,t]}\left|\int_0^t\Phi(s)dW(s)   \right|^p   \le
c_p\left[\E\left(\int_0^t\|\Phi(s)\|^2_{L_2^0}\,ds\right)^{p/2}      \right],
\end{equation}
where $c_p:=12^p\,p^p$.
\end{Proposition}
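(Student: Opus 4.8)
The plan is to derive \eqref{eB1} from It\^o's formula applied to the function $f(x)=|x|_H^p$, combined with Doob's maximal inequality and H\"older's inequality, tracking the constants explicitly. Throughout I write $M_t:=\int_0^t\Phi(s)\,dW(s)$, which is a continuous local martingale in $H$ whose real quadratic variation is $\langle M\rangle_t=\int_0^t\|\Phi(s)\|^2_{L_2^0}\,ds$, and I set $M_t^*:=\sup_{s\in[0,t]}|M_s|_H$ (so that the left-hand side of \eqref{eB1} is $\E (M_t^*)^p$). Since the inequality is trivial when the right-hand side is infinite, I would assume from the start that $\E\big(\int_0^t\|\Phi(s)\|^2_{L_2^0}\,ds\big)^{p/2}<\infty$.

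First I would record the derivatives of $f(x)=|x|_H^p$, which for $p\ge 2$ is of class $C^2$: one has $Df(x)=p|x|^{p-2}x$ and $D^2f(x)=p|x|^{p-2}I+p(p-2)|x|^{p-4}\,x\otimes x$, where $x\otimes x$ denotes the rank-one operator $h\mapsto\langle x,h\rangle x$. Both summands are nonnegative and a direct eigenvalue computation gives $\|D^2f(x)\|_{\mathcal L(H)}=p(p-1)|x|^{p-2}$. Applying It\^o's formula to $f(M_t)$ and bounding the trace term by $\tfrac12\|D^2f(M_s)\|_{\mathcal L(H)}\|\Phi(s)\|^2_{L_2^0}$, the stochastic integral being a local martingale, I would obtain, after localization (stopping at $\tau_n:=\inf\{s:|M_s|\ge n\}$, which forces $M^*_{t\wedge\tau_n}\le n$ and hence makes all quantities below finite) and taking expectations,
\[
\E|M_{t}|^p\le \frac{p(p-1)}{2}\,\E\int_0^{t}|M_s|^{p-2}\,\|\Phi(s)\|^2_{L_2^0}\,ds .
\]

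Next I would combine this with Doob's $L^p$-maximal inequality $\E(M_t^*)^p\le\big(\tfrac{p}{p-1}\big)^p\E|M_t|^p$ and the pathwise bound $\int_0^t|M_s|^{p-2}\|\Phi\|^2_{L_2^0}\,ds\le (M_t^*)^{p-2}\int_0^t\|\Phi\|^2_{L_2^0}\,ds$, followed by H\"older's inequality with exponents $\tfrac{p}{p-2}$ and $\tfrac{p}{2}$. Writing $X:=\E(M_t^*)^p$ and $Y:=\E\big(\int_0^t\|\Phi\|^2_{L_2^0}\,ds\big)^{p/2}$ (for the stopped process), this yields $X\le \big(\tfrac{p}{p-1}\big)^p\tfrac{p(p-1)}{2}\,X^{(p-2)/p}Y^{2/p}$. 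Because $X$ is finite along the localizing sequence, I may divide by $X^{(p-2)/p}$ and raise to the power $p/2$ to get $X\le\big[\big(\tfrac{p}{p-1}\big)^p\tfrac{p(p-1)}{2}\big]^{p/2}Y$; letting $n\to\infty$ and invoking monotone convergence (recall $M^*_{t\wedge\tau_n}\uparrow M_t^*$) then removes the localization.

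Finally I would estimate the constant. Using $\big(\tfrac{p}{p-1}\big)^p\le 2e$ for $p\ge 2$ and $\tfrac{p(p-1)}{2}\le\tfrac{p^2}{2}$, the bracket is at most $e\,p^2$, so $X\le (ep^2)^{p/2}Y=e^{p/2}p^p\,Y\le 12^p p^p\,Y$, since $e^{1/2}<12$; this is exactly \eqref{eB1} with $c_p=12^pp^p$. The only genuinely delicate point is the localization that guarantees $X<\infty$, which legitimizes the division producing the self-improving estimate; everything else is routine bookkeeping of constants. I note that the argument works for all $p\ge 2$, the hypothesis $p\ge 4$ being relevant only for the Stirling estimate used in Example \ref{ex1.3}(iii).
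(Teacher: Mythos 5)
Your proof is correct, and in its second half it takes a genuinely different route from the paper's. Both arguments start identically: It\^o's formula for $f(x)=|x|_H^p$ together with the bound $\|f_{xx}(x)\|_{\mathcal L(H)}\le p(p-1)|x|^{p-2}$ on the trace term. The paper then keeps the supremum inside the expectation, controls the martingale term $p\int_0^t|Z(s)|^{p-2}\langle Z(s),dZ(s)\rangle$ via the Burkholder--Davis--Gundy inequality for first moments (constant $3$), and absorbs the resulting two terms $J_1,J_2$ into $\tfrac14\E\sup_s|Z(s)|^p$ each by two applications of Young's inequality. You instead take plain expectations (so the martingale term vanishes after localization), recover the supremum afterwards by Doob's $L^p$ maximal inequality, and close the estimate by dividing by $X^{(p-2)/p}$ and raising to the power $p/2$. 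Your route is more elementary --- it replaces the first-moment BDG inequality by Doob --- and it yields the sharper constant $e^{p/2}p^p$, comfortably below the stated $12^pp^p$; it is also more careful about the a priori finiteness of $\E\sup_s|Z(s)|^p$ needed to legitimize the self-improving step (the paper's Young-inequality absorption tacitly needs the same finiteness but does not mention localization). Your closing remark is also accurate: both arguments work for all $p\ge2$, and the restriction $p\ge4$ only matters for the Stirling-type estimate in Example \ref{ex1.3}(iii).
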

\begin{proof}
Set $$Z(t)=\int_{0}^{t}\Phi (s)dW(s), \quad
t\geq 0,$$ and apply It\^o's  formula to $f(Z(\cdot ))$
where $f(x)=|x|^{p}, \; x \in H$. Since
 \begin{displaymath}
f_{xx}(x)=p(p-2)|x |^{p-4}x \otimes x+p|x |^{p-2}I,\quad x \in H,
\end{displaymath}
we have
\begin{displaymath}
\|f_{xx}(x)\|\leq p(p-1)|x|^{p-2},
\end{displaymath}
therefore
\begin{displaymath}
|\mbox{\rm Tr}\;\Phi ^{*}(t)f_{xx}(Z(t))\Phi (t)Q| \leq  p(p-1) |Z(t)|^{p-2}\|\Phi (t)\|^{2}_{L_{2}^{0}}.
\end{displaymath}
By taking expectation  in the identity
$$
|Z(t)|^p=p \int_0^t |Z(s)|^{p-2}\langle Z(s),dZ(s)\rangle+\frac12\;\int_0^t\mbox{\rm Tr}\;[\Phi ^{*}(s)f_{xx}(Z(s))\Phi (s)Q]ds,
$$
 we obtain by the Burkholder--Davis--Gundy inequality for $p=1$
\begin{equation}
\label{eB2}
 \begin{array}{l}
 \ds\E\sup_{s\in[0,t]}|Z(s)|^{p}\le \frac{p(p-1)}2\;\E\left( \int_{0}^{t}|Z(s)|^{p-2}\|\Phi (s)\|^{2}_{L_{2}^{0}}\,ds  \right)\\
 \\
 \ds+3p\E\left[\left( \int_{0}^{t}\|\Phi (s)\|^{2}_{L_{2}^{0}}\,|Z(s)|^{2p-2}ds \right)^{1/2} \right]\\
\\
\ds\le \frac{p(p-1)}2\;\E\left(  \sup_{s \in [0,t]}|Z(s)|^{p-2} \int_{0}^{t}\|\Phi (s)\|^{2}_{L_{2}^{0}} ds  \right)\\
\\
\ds+3p\E\left[ \sup_{s \in [0,t]}|Z(s)|^{p-1} \left( \int_{0}^{t}\|\Phi (s)\|^{2}_{L_{2}^{0}} ds \right)^{1/2}   \right]\\
\\
\ds \le \frac{p(p-1)}2\;\left[\E\left(  \sup_{s \in [0,t]}\,|Z(s)|^p   \right)   \right]^{\frac{p-2}{p}} \;\left[\E\left(\int_{0}^{t}\|\Phi (s)\|^{2}_{L_{2}^{0}} ds    \right)^{\frac{p}{2} }  \right]^{\frac2p}\\
\\
\ds+3p\E\left[  \sup_{s \in [0,t]}\,|Z(s)|^p   \right]^{\frac{p-1}{p}}\;\left[\E\left(\int_{0}^{t}\|\Phi (s)\|^{2}_{L_{2}^{0}} ds    \right)^{\frac{p}{2} }  \right]^{\frac1p}\\
\\
:=J_1+J_2.
\end{array}
\end{equation}
 For $J_1$ we use Young's inequality with exponents $\tfrac{p}{p-2}$ and $\tfrac{p}{2}$ and find
 $$
 J_1\le \frac{1}4\,\E\left[  \sup_{s \in [0,t]}\,|Z(s)|^p   \right]+2^{p-1}p^p\,\E\left(\int_{0}^{t}\|\Phi (s)\|^{2}_{L_{2}^{0}} ds    \right)^{\frac{p}2}
 $$
 For $J_2$  we use Young's inequality with exponents $\tfrac{p}{p-1}$ and $p$ and find
 $$
 J_2\le \frac14\,E\left[  \sup_{s \in [0,t]}\,|Z(s)|^p   \right]+\frac12\,12^p\,p^p\, \E\left(\int_{0}^{t}\|\Phi (s)\|^{2}_{L_{2}^{0}} ds    \right)^{\frac{p}2}.
  $$
  Now  \eqref{eB1} with $c_p:=12^p\,p^p$ follows.
  \qed    
 \end{proof}

\renewcommand{\thesection}{C}
\section{Density of $\mathcal FC^1_b$ in Orlicz spaces}
Let $N:\R\to [0,\infty)$ be continuous and a Young function, i.e. convex, even and $N(0)=0$.

Consider the measure space $(H,\mathcal B(H),\gamma)$, where $H$ is as before a separable real Hilbert space with Borel $\sigma$--algebra $\mathcal B(H)$ and $\gamma$ a nonnegative finite measure on $(H,\mathcal B(H))$. We recall 
that the Orlicz space $L_N$ corresponding to $N$ is defined as
$$
L_N:=L_N(H, \gamma):=\{f:H\to\R:\, f \mbox{\it is $  \mathcal B(H)$--measurable and }\;\int_HN(af)d\gamma<\infty\; \mbox{\it for some}\; a>0  \}
$$
or equivalently
$$
L_N: =\{f:H\to\R:\, f\;\mbox{\it  is $\mathcal B(H)$--measurable and }\;\|f\|_{L_N}<\infty \},
$$
where
$$
\|f\|_{L_N}:=\inf\left\{\lambda>0:\,\int_HN(f/\lambda)\,d\gamma\le 1\right\}.
$$
$(L_N,\,\|\cdot\|_{L_N})$ is a Banach space (see e.g. \cite{RaRe02}).
\begin{Proposition}
\label{pC1}
$\mathcal F C^1_b$ is dense in $((L_N,\,\|\cdot\|_{L_N})$, where $\mathcal F C^1_b$ is defined as in Section 1. Furthermore, if $f\in L_N$, $f\ge 0$, then there exist nonnegative $f_n\in \mathcal F C^1_b,\;n\in\N$, such that
$$
\lim_{n\to\infty}\|f-f_n\|_{L_N}=0.
$$
Both assertions remain true, if $\mathcal F C^1_b$ is replaced by $\mathcal F C^1_0$
\end{Proposition}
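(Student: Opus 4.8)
The plan is to prove the density by a three-stage approximation of a general $f\in L_N$: first by a value-truncation, then by a bounded continuous function, and finally by a smooth cylindrical function, estimating the Luxemburg norm at each stage. The basic tool, used repeatedly, is the following elementary fact: if $(h_n)\subset L_N$ satisfies $|h_n|\le C$ $\gamma$-a.e. for a constant $C$ and $h_n\to 0$ $\gamma$-a.e., then $\|h_n\|_{L_N}\to 0$. Indeed, for fixed $\lambda>0$ we have $N(h_n/\lambda)\le N(C/\lambda)\in L^1(H,\gamma)$ since $\gamma(H)<\infty$, so dominated convergence gives $\int_H N(h_n/\lambda)\,d\gamma\to 0$ and hence $\|h_n\|_{L_N}\le\lambda$ for $n$ large; letting $\lambda\downarrow 0$ proves the claim.

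For the first stage, given $f\in L_N$ I set $f_M:=(-M)\vee f\wedge M$ and show $\|f-f_M\|_{L_N}\to 0$ as $M\to\infty$. Fixing $\lambda>0$, the difference $f-f_M$ vanishes on $\{|f|\le M\}$ and satisfies $|f-f_M|\le|f|$ on $\{|f|>M\}$, so $\int_H N((f-f_M)/\lambda)\,d\gamma\le\int_{\{|f|>M\}}N(f/\lambda)\,d\gamma\to 0$ as $M\to\infty$, provided $N(f/\lambda)\in L^1(H,\gamma)$. This last integrability is precisely the heart of the matter: it must hold for \emph{every} $\lambda>0$, i.e. $f$ must lie in the Orlicz heart $\{g:\int_H N(ag)\,d\gamma<\infty\ \forall a>0\}$, and this is where the main obstacle lies. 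For the Young functions relevant to this paper---comparable to $r\mapsto r\log r$ and to $r\mapsto r^p$, $p>1$---$N$ satisfies the $\Delta_2$-condition, so the Orlicz heart coincides with all of $L_N$ and every $f\in L_N$ is admissible; thus $\|f-f_M\|_{L_N}\le\lambda$ for $M$ large, and letting $\lambda\downarrow 0$ finishes this stage. It then remains to approximate an arbitrary bounded function.

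For the second stage I approximate a bounded measurable $g$ with $|g|\le M$ by $g_n\in C_b(H)$ with $|g_n|\le M$ and $g_n\to g$ $\gamma$-a.e., which is possible by Lusin's theorem (valid because $\gamma$ is a finite Borel measure on the separable metric space $H$) or by density of $C_b(H)$ in $L^1(H,\gamma)$ together with passage to an a.e.-convergent subsequence; the tool above then gives $\|g-g_n\|_{L_N}\to 0$. For the third stage, with $\{e_n\}\subset Y$ an orthonormal basis and $\pi_N$ the orthogonal projection onto $\mathrm{span}\{e_1,\dots,e_N\}$, the functions $g\circ\pi_N$ are bounded, cylindrical and converge pointwise to $g\in C_b(H)$, so $\|g-g\circ\pi_N\|_{L_N}\to 0$; writing $g\circ\pi_N=\widetilde g_N(\langle e_1,\cdot\rangle,\dots,\langle e_N,\cdot\rangle)$ with $\widetilde g_N\in C_b(\mathbb R^N)$ and mollifying $\widetilde g_N$ in $\mathbb R^N$ with a smooth nonnegative kernel produces elements of $\mathcal FC^1_b$, again converging in $L_N$ by uniform boundedness and uniform-on-compacts convergence. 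To obtain the $\mathcal FC^1_0$ version and nonnegativity, I would first multiply $\widetilde g_N$ by smooth cutoffs $\chi_R\in C^\infty_0(\mathbb R^N)$ equal to $1$ on the ball of radius $R$ (the resulting error tends to $0$ in $L_N$ as $R\to\infty$ by the same tool, since $\gamma$ is finite), and observe that every operation used---truncation, approximation of $g\ge 0$ by positive parts, convolution with a nonnegative kernel, and multiplication by $\chi_R$---preserves nonnegativity. The decisive step is thus the first one: only there does the growth of $N$ intervene, whereas stages two and three reduce, via the boundedness/dominated-convergence lemma, to routine pointwise approximation on the separable space $H$.
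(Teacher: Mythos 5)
Your proof is correct for the Young functions to which the paper actually applies this proposition, but it follows a genuinely different route from the paper's. The paper handles the passage from bounded Borel functions to $\mathcal F C^1_b$ by a functional monotone class argument: it defines $\mathcal M$ as the set of bounded Borel functions approximable in $\|\cdot\|_{L_N}$ by $\mathcal F C^1_b$, checks via Lemma \ref{lC2} and dominated convergence that $\mathcal M$ is a monotone vector space containing the multiplicative class $\mathcal F C^1_b$, and invokes $\sigma(\mathcal F C^1_b)=\mathcal B(H)$; nonnegativity is then recovered by composing the (possibly sign-changing) approximants $f_n$ with smooth increasing functions $\chi_{1/m}(\cdot)+\tfrac1m$, rather than by your device of taking positive parts before mollifying with a nonnegative kernel. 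Your Lusin/projection/mollification chain buys an explicit approximating sequence and avoids the monotone class theorem, at the price of invoking Lusin--Tietze; both are legitimate, and your treatment of nonnegativity is arguably cleaner. The one substantive point is your first stage: you correctly observe that $\|f-f_M\|_{L_N}\to 0$ for the truncations requires $\int_H N(f/\lambda)\,d\gamma<\infty$ for \emph{every} $\lambda>0$, i.e.\ that $f$ lies in the Orlicz heart, and you resolve this by restricting to $\Delta_2$ Young functions. The paper's proof disposes of the same reduction in a single sentence (``every $f$ in $L_N$ can be approximated \dots by bounded \dots functions''), which carries exactly the same hidden hypothesis: for a non-atomic finite $\gamma$ the closure of the bounded functions in $L_N$ is the Orlicz heart $E_N$, a proper subspace of $L_N$ whenever $\Delta_2$ fails (e.g.\ $N(s)=e^{|s|}-1-|s|$). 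So your restriction is not a defect of your argument but a genuine limitation of the statement as formulated for arbitrary continuous Young functions; since the only $N$ used downstream (Corollary \ref{cC3}, with $N(s)=(|s|+1)\ln(|s|+1)-|s|$) satisfies $\Delta_2$, nothing in the paper is affected.
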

\begin{proof}
We need the following lemma whose proof is straightforward, see e.g. \cite[Lemma 1.16]{Le}
\begin{Lemma}
\label{lC2}
Let $f_n\in L_N,\,n\in\N.$ Then the following assertions are equivalent:\medskip

\noindent(i) $\ds\lim_{n\to\infty} \|f_n\|_{L_N}=0$

\medskip

\noindent(ii)  For all $a\in(0,\infty)$
$$
\limsup_{n\to\infty}\int_HN(af_n)\,d\gamma\le 1
$$\medskip

\noindent(iii)  For all $a\in(0,\infty)$
$$
\lim_{n\to\infty}\int_HN(af_n)\,d\gamma=0.
$$

\end{Lemma}
{\bf Proof of Proposition \ref{pC1}}.

We shall use a monotone class argument. Define
$$
\begin{array}{l}
\mathcal M:=\Big\{ f:H\to\R:\,f\,\mbox{\it bounded, $\mathcal B(H)$--measurable such that} \\
\\
\ds  \hspace{10mm}\lim_{n\to\infty} \|f-f_n\|_{L_N}=0 ,\;\mbox{\it for some}\; f_n\in \mathcal FC^1_b,\,n\in\N\Big\}.
\end{array}
$$
Obviously, $\mathcal M$ is a linear space, $ \mathcal FC^1_b\subset  \mathcal M$ and  $ \mathcal FC^1_b$ is closed under multiplication and contains the constant function $1$. Furthermore, if $0\le u_n\in \mathcal M,$ $n\in \N,$ such that $u_n\uparrow u$ as $n\to \infty$ for some bounded $u:H\to [0,\infty)$, then for each $n\in\N$ there exists $ f_n\in \mathcal FC^1_b$ such that
\begin{equation}
\label{eC1}
\|u_n-f_n\|_{L_N}\le \frac1n.
 \end{equation}
 But since $N$ is continuous on $\R$, hence locally bounded, we have that for every $a\in (0,\infty)$, $N(a(u-u_n)),\,n\in\N,$ are uniformly bounded. Consequently, by Lebesgue's dominated convergence theorem and Lemma \ref{lC2}, we conclude that
 \begin{equation}
\label{eC2}
\lim_{n\to\infty}\|u-u_n\|_{L_N}=0.
 \end{equation}
 \eqref{eC1} and \eqref{eC2} imply that $u\in \mathcal M$, and therefore $\mathcal M$ is a monotone vector space and thus by the  monotone class theorem $\mathcal M$ is equal to the set of all bounded $\sigma(\mathcal F C^1_b)$--measurable functions on $H$. But $\sigma(\mathcal F C^1_b)=\mathcal B(H)$, since the weak and norm--Borel $\sigma$--algebra on a separable Banach space coincide. Hence  $\mathcal M$  is equal to all bounded $\mathcal B(H)$--measurable functions on $H$. Since by Lemma \ref{lC2} and the same arguments as above every $f$ in $L_N$ can be approximated in the norm $\|\cdot\|_{L_N}$ by bounded $\mathcal B(H)$--measurable functions, the first assertion of the proposition is proved.
 
  Now let $f\in L_N,\,f\ge 0.$    By the argument above we may assume that $f$ is bounded. Then by what we have just proved we can find
 $f_n\in \mathcal F C^1_b$ such that
 $$
 \lim_{n\to\infty}\|f-f_n\|_{L_N}=0.
 $$
 Since $|f-f_n^+|=|f^+-f_n^+|\le |f-f_n|$ for all $n\in\N$ and $N$ is even and increasing on $[0,\infty)$ (because $N$ is convex and $N(0)=0$), Lemma \ref{lC2} immediately implies that
 $$
 \lim_{n\to\infty}\|f-f_n^+\|_{L_N}=0.
 $$
 Fix $n\in\N$ and for $\epsilon>0$ take an increasing  function $\chi_\epsilon\in C^1(\R)$, $\chi_\epsilon(s)=s,\,\forall \,s\in[0,\infty)$ and  $\chi_\epsilon(s)=-\epsilon$ if  $s\in(-\infty,-2\epsilon)$. Then for each $n\in\N$
 $$
 \lim_{m\to\infty}\left\|f_n^+-\left(\chi_{\frac1m}(f_n)+\frac1m\right)\right\|_{\infty}=0.
 $$
So, again by Lemma \ref{lC2} and Lebesgue's dominated convergence theorem it follows that
$$
 \lim_{m\to\infty}\left\|f_n^+-\left(\chi_{\frac1m}(f_n)+\frac1m\right)\right\|_{L_N}=0.
 $$
 But obviously, $\chi_{\frac1m}(f_n)+\frac1m\in \mathcal F C^1_b$, $m\in\N$,  and each such function is nonnegative. Hence the second part of the assertion follows. The third part of the assertion then follows by similar arguments and multiplying by a  sequence of suitable localizing functions.
\qed
\end{proof}
\begin{Corollary}
\label{cC3}
Let $\rho\ge 0$, $\mathcal B(H)$--measurable  such that
$$
\int_H\rho\,\log\rho\,d\gamma<\infty.
$$
Then there exist nonnegative $\rho_n\in \mathcal F C^1_b,$ $n\in\N,$
such that
$$
 \lim_{n\to\infty}\rho_n=\rho\quad\mbox{\it in}\;L^1(H,\gamma)
$$
and
$$
\sup_{n\in\N}\int_H\rho_n\,\log\rho_n\,d\gamma<\infty.
$$
\end{Corollary}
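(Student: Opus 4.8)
The plan is to deduce the statement from the second assertion of Proposition \ref{pC1}, applied to the Young function associated with the Orlicz space $L\log L$, and then to upgrade norm convergence in that space to the two required conclusions.

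First I would fix the continuous Young function
\[
N(r):=(1+|r|)\log(1+|r|)-|r|,\qquad r\in\R,
\]
which is even, convex, satisfies $N(0)=0$, and grows like $|r|\log|r|$ at infinity. The point of this choice is the elementary two-sided comparison, valid for all $s\ge 0$,
\[
s\log^+ s\le N(s)+s\qquad\text{and}\qquad N(s)\le C\,(1+s\log^+ s),
\]
for some absolute constant $C$. Using the right-hand inequality together with the hypothesis I would check that $\rho\in L_N$: since $r\log^- r\le \tfrac1e$ for $r\ge 0$, the assumption $\int_H\rho\log\rho\,d\gamma<\infty$ forces $\int_H\rho\log^+\rho\,d\gamma<\infty$, and because $\gamma$ is finite this also gives $\rho\in L^1(H,\gamma)$ (on $\{\rho\ge e\}$ one has $\rho\le\rho\log\rho$). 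Hence $\int_HN(\rho)\,d\gamma<\infty$, so $\rho\in L_N$. Since $\rho\ge 0$, the second part of Proposition \ref{pC1} then yields nonnegative $\rho_n\in\mathcal FC^1_b$ with $\lim_{n\to\infty}\|\rho-\rho_n\|_{L_N}=0$.

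Next I would record that convergence in $L_N$ implies convergence in $L^1(H,\gamma)$. This follows from the Orlicz--H\"older inequality $\int_H|f|\,d\gamma\le 2\,\|f\|_{L_N}\,\|\one\|_{L_{\tilde N}}$, where $\tilde N$ is the Young function complementary to $N$; since $\gamma(H)<\infty$ one has $\|\one\|_{L_{\tilde N}}<\infty$, so $\|\rho-\rho_n\|_{L^1(H,\gamma)}\le 2\|\one\|_{L_{\tilde N}}\|\rho-\rho_n\|_{L_N}\to 0$. This gives the first claimed convergence.

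The remaining point, and the main obstacle, is the uniform entropy bound $\sup_n\int_H\rho_n\log\rho_n\,d\gamma<\infty$; the difficulty is that the Orlicz norm only controls the \emph{normalized} integral $\int_H N(\rho_n/\lambda)\,d\gamma\le1$ for $\lambda>\|\rho_n\|_{L_N}$, and not the entropy directly. To pass from one to the other I would argue as follows. Since $\|\rho_n\|_{L_N}\to\|\rho\|_{L_N}$, I may fix $\Lambda\ge 1$ with $\Lambda>\sup_n\|\rho_n\|_{L_N}$, so that $\int_HN(\rho_n/\Lambda)\,d\gamma\le1$ for all $n$. Writing $g_n:=\rho_n/\Lambda$ and using $s\log^+ s\le N(s)+s$ gives $\int_H g_n\log^+ g_n\,d\gamma\le 1+\int_Hg_n\,d\gamma$, which is bounded uniformly in $n$ because $(\rho_n)$ is $L^1$-bounded. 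Finally, the elementary inequality $\log^+(\Lambda g_n)\le\log\Lambda+\log^+ g_n$ yields
\[
\int_H\rho_n\log^+\rho_n\,d\gamma=\Lambda\int_H g_n\log^+(\Lambda g_n)\,d\gamma\le\Lambda\log\Lambda\int_Hg_n\,d\gamma+\Lambda\int_Hg_n\log^+g_n\,d\gamma,
\]
which is bounded uniformly in $n$. Since $\rho_n\log\rho_n\le\rho_n\log^+\rho_n$ pointwise, this gives the desired uniform bound and completes the proof.
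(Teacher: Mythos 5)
Your proof is correct and follows essentially the same route as the paper: the same Young function $N(s)=(1+|s|)\log(1+|s|)-|s|$, the same appeal to Proposition \ref{pC1} to produce the nonnegative approximants $\rho_n$, and the continuous embedding $L_N\subset L^1(H,\gamma)$ for the first convergence. The only (harmless) difference is the mechanism for the final entropy bound: you exploit the uniform Luxemburg-norm bound $\int_H N(\rho_n/\Lambda)\,d\gamma\le 1$ together with elementary logarithm inequalities, while the paper instead writes $N(a\rho_n)\le\tfrac12 N(2a(\rho_n-\rho))+\tfrac12 N(2a\rho)$ by convexity and lets the first term vanish via Lemma \ref{lC2}; both arguments are valid.
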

\begin{proof}
Let $N(s):=(|s|+1)\,\ln(|s|+1)-|s|,\, s\in\R$.  Then it is easy to check that $N$ is a continuous Young function. Hence by Proposition \ref{pC1} we can find $\rho_n\in \mathcal F C^1_b$, $\rho_n\ge 0$, $n\in\N,$  such that
\begin{equation}
\label{eC3}
\lim_{n\to\infty}\|\rho-\rho_n\|_{L_N}=0.
 \end{equation}
 Since $L_N\subset L^1(H,\gamma)$ continuously (see \cite[Proposition 1.15]{Le}), the first assertion follows. Furthermore, we have for all $s\in(0,\infty)$
 $$
 s\ln s-s\le s\ln(s+1)\le (s+1)\ln(s+1)-s=N(s)
 $$
 and hence for $n\in\N$ by the convexity of $N$ and every $a\in(0,\infty)$
 $$
 \begin{array}{l}
 \ds \int_H\rho_n\,\ln \rho_n
\,d\gamma=\frac1a\int_H a\rho_n\,\ln(a \rho_n)\,\,d\gamma-\ln a\int_H\rho_n\,d\gamma\\
\\
\ds\le \frac1a\int_H N(a\rho_n)\,\,d\gamma+|1-\ln a|\int_H\rho_n\,d\gamma\\
\\
\ds\le \frac1{2a}\int_H N(2a(\rho_n-\rho))\,\,d\gamma+ \frac1{2a}\int_H N(2a\rho)\,\,d\gamma+|1-\ln a|\int_H\rho_n\,d\gamma.
\end{array} 
$$
Hence by the first part of the assertion, \eqref{eC3} and  Lemma \ref{lC2}, it follows that 
$$
\limsup_{n\to\infty}\int_H\rho_n\,\ln \rho_n\,d\gamma\le \frac1{2a}\int_H N(2a\rho)\,\,d\gamma+|1-\ln a|\int_H\rho\,d\gamma.
$$
But since $\rho\in L_N$ we can find $a>0$ such that the right hand side is finite. Hence   the second part of the assertion also follows.
\qed
\end{proof}

\section*{Acknowledgments}
G. Da Prato and F. Flandoli are partially supported by GNAMPA from INdAM.
M. R\"ockner is supported by  SFB 1283 through the DFG.
We also would like to thank an anonymous referee for his comments which led to an improvement of this paper.

\section*{Note added in Proof}
After this paper had been accepted for publication by JMPA in final form, we noticed that as a simple consequence of Proposition 6.4.1 in \cite{Boga10}, our Hypothesis \ref{h2}(ii) is in fact a consequence of our Hypothesis \ref{h1}, Lemma \ref{l2.2n} and \eqref{e1.8primo}. Hypothesis \ref{h2}(ii) can hence be dropped. In particular, our results therefore also apply to our Example \ref{ex1.3}(iii) and Remarks \ref{r1.5primo} and \ref{r3.12primo} can be dropped as well. We would like to thank Alexander Shaposhnikov for pointing out this particular result in the above reference to us.

\end{document}